\newtheorem{thm}{Theorem}[section]
\newtheorem{cor}[thm]{Corollary}
\newtheorem{lem}[thm]{Lemma}
\newtheorem{defn}[thm]{Definition}
\newtheorem{prop}[thm]{Proposition}
\newtheorem{rem}[thm]{Remark}
\newtheorem{example}[thm]{Example}
\theoremstyle{remark}
\newcommand{\la}{\lambda}
\def\multiset#1#2{\ensuremath{\left(\kern-.5em\left(\genfrac{}{}{0pt}{}{#1}{#2}\right)\kern-.5em\right)}}
\journal{}
\begin{document}

\begin{frontmatter}


\title{Self-conjugate $(s,s+d,\dots,s+pd)$-core partitions and \\ free rational Motzkin paths}

\author[1]{Hyunsoo Cho}
\ead{hyunsoo@ewha.ac.kr}
\author[2]{JiSun Huh\corref{cor1}}
\ead{hyunyjia@ajou.ac.kr}
\cortext[cor1]{Corresponding author.}

\address[1]{Institute of Mathematical Sciences, Ewha Womans University, Seoul 03760, Republic of Korea}
\address[2]{Department of Mathematics, Ajou University, Suwon 16499, Republic of Korea}


\begin{abstract}
A partition is called an $(s_1,s_2,\dots,s_p)$-core partition if it is simultaneously an $s_i$-core for all $i=1,2,\dots,p$. Simultaneous core partitions have been actively studied in various directions. In particular, researchers concerned with properties of such partitions when the sequence of $s_i$ is an arithmetic progression. 

In this paper, for $p\geq 2$ and relatively prime positive integers $s$ and $d$, we propose the $(s+d,d;a)$-abacus of a self-conjugate partition and establish a bijection between the set of self-conjugate $(s,s+d,\dots,s+pd)$-core partitions and the set of free rational Motzkin paths with appropriate conditions. For $p=2,3$, we give formulae for the number of self-conjugate $(s,s+d,\dots,s+pd)$-core partitions and the number of self-conjugate $(s,s+1,\dots,s+p)$-core partitions with $m$ corners.
\end{abstract}


\begin{keyword}
simultaneous core partitions \sep self-conjugate  \sep free rational Motzkin paths 

\MSC[2008] 05A17 \sep 05A19

\end{keyword}

\end{frontmatter}



\section{Introduction}

A \emph{partition} $\la$ of $n$ is a finite non-increasing positive integer sequence $(\la_1,\la_2,\dots,\la_\ell)$ such that the sum of all $\la_i$ is equal to $n$.
For a partition $\la$ of $n$, the \emph{Young diagram} of $\la$ is a collection of $n$ boxes arranged in left-justified rows, with the $i$th row having $\la_i$ boxes. The \emph{hook length} of a box in the Young diagram of $\la$ is defined to be the number of boxes weakly below and strictly to the right of the box. Figure~\ref{fig:young} shows the Young diagram of $\la$ and the hook lengths of each box. For a positive integer $s$, a partition $\la$ is an \emph{$s$-core} (partition) if it has no box of hook length $s$. Referring to Figure~\ref{fig:young}, we can easily verify that $\la=(5,4,2,1)$ is an $s$-core for $s=5,7,$ or $s\geq 9$. For a sequence $(s_1,s_2,\dots,s_p)$ of distinct positive integers, we say that a partition $\la$ is an \emph{$(s_1,s_2,\dots,s_p)$-core} if it is simultaneously an $s_1$-core, an $s_2$-core, \dots, and an $s_p$-core. 

\begin{figure}[ht]\label{fig:young}
\centering
\begin{ytableau}
8&6&4&3&1 \\
6&4&2&1 \\
3&1 \\
1
\end{ytableau}
\caption{The Young diagram and the hook lengths of $\la=(5,4,2,1)$.}
\end{figure}

There has been considerable interest in recent years in core partitions, which have applications in representation theory and number theory.
Since the following result of Anderson \cite{Anderson}, many researchers found results on $(s,t)$-core partitions (see \cite{AHJ,CHW,Fayers,Fayers2,FV,FMS}).

\begin{thm}\cite[Theorem 1]{Anderson}
For relatively prime positive integers $s$ and $t$, the number of $(s,t)$-core partitions is given by
\[
\frac{1}{s+t}\binom{s+t}{s}\,.
\]
In particular, the number of $(s,s+1)$-core partitions is the $s$th Catalan number $C_s=\frac{1}{s+1}\binom{2s}{s}=\frac{1}{2s+1}\binom{2s+1}{s}$.
\end{thm}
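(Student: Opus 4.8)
The plan is to translate the two core conditions into a single combinatorial condition on beta-sets, recognize the resulting objects as the order ideals of a finite poset attached to the numerical semigroup $\langle s,t\rangle$, and then enumerate those order ideals by a cycle-lemma argument for rational Dyck paths.

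First I would invoke the classical bijection $\la\mapsto\beta(\la)$ between partitions and finite subsets of $\mathbb{Z}_{>0}$, where $\beta(\la)$ is the set of hook lengths of the boxes in the first column of the Young diagram of $\la$ (for instance $\beta((5,4,2,1))=\{8,6,3,1\}$). Explicitly, if $\la$ has $\ell$ parts then $\beta(\la)=\{\la_i+\ell-i:1\le i\le\ell\}$, and this is a bijection onto finite subsets of $\mathbb{Z}_{>0}$. The fact I would use is the standard description of $s$-cores through beta-sets: $\la$ is an $s$-core if and only if $\beta(\la)$ is closed under subtracting $s$, i.e.\ $h\in\beta(\la)$ and $h\ge s$ imply $h-s\in\beta(\la)$; this follows by tracking the effect of removing a border strip of length $s$ on first-column hook lengths (equivalently via the $s$-abacus).

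Next I would intersect the conditions for $s$ and $t$. Let $S=\langle s,t\rangle=\{as+bt:a,b\in\mathbb{Z}_{\ge 0}\}$ and let $G=\mathbb{Z}_{>0}\setminus S$ be the set of gaps of $S$, which is finite of size $(s-1)(t-1)/2$ because $\gcd(s,t)=1$ (Sylvester). If some $h\in\beta(\la)$ lay in $S$, then iterating subtraction of $s$ or $t$ (staying inside both $S$ and $\beta(\la)$) would force $0\in\beta(\la)$, which is impossible; hence $\beta(\la)\subseteq G$ for every $(s,t)$-core $\la$. Since $g\in G$ with $g\ge s$ (resp.\ $g\ge t$) forces $g-s\in G$ (resp.\ $g-t\in G$), the two closure conditions are equivalent to saying that $\beta(\la)$ is an order ideal of the poset $P_{s,t}=(G,\preceq)$ in which $g\preceq g'$ iff $g'-g\in S$. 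Therefore the number of $(s,t)$-cores equals the number of order ideals of $P_{s,t}$.

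Finally I would count the order ideals of $P_{s,t}$. The poset $P_{s,t}$ is isomorphic to the poset of unit cells lying above the diagonal of the $t\times s$ rectangle, so an order ideal corresponds bijectively to a monotone lattice path from $(0,0)$ to $(t,s)$, with unit east and north steps, that stays weakly below that diagonal. The number of such paths is $\frac{1}{s+t}\binom{s+t}{s}$ by the cycle lemma: among the $\binom{s+t}{s}$ monotone paths, the cyclic group of order $s+t$ acts on the step words; each orbit has full size $s+t$ since $\gcd(s,t)=1$ makes every such word aperiodic, and exactly one path per orbit stays weakly below the diagonal. Specializing to $t=s+1$ gives $\frac{1}{2s+1}\binom{2s+1}{s}$, which equals $\frac{1}{s+1}\binom{2s}{s}=C_s$ by a routine binomial identity. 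The step I expect to be the main obstacle is this last one: pinning down the order isomorphism between the abstract gap poset $P_{s,t}$ and the staircase region of the rectangle and verifying that order ideals correspond exactly to sub-diagonal paths; once that dictionary is fixed, the enumeration is the classical rational-Catalan count.
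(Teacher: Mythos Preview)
The paper does not give its own proof of this statement: it is quoted from Anderson's paper as background, and no argument for it appears anywhere in the present text. So there is nothing in the paper to compare your proposal against.

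That said, your outline is essentially Anderson's original proof and is correct in substance. The reduction via first-column hook lengths (beta-sets) to order ideals of the gap poset $P_{s,t}$ of the numerical semigroup $\langle s,t\rangle$ is exactly the mechanism Anderson used, and the enumeration of those order ideals as sub-diagonal lattice paths in a $t\times s$ rectangle, counted by the cycle lemma to yield $\frac{1}{s+t}\binom{s+t}{s}$, is the standard completion of the argument. You have also correctly flagged the one point that genuinely needs care: setting up the order isomorphism between $P_{s,t}$ and the staircase of cells above (or below) the diagonal so that order ideals match monotone paths. Once the labeling $g\mapsto$ (cell whose content is $g$) is written down explicitly, that step is routine; the cycle-lemma count then goes through because $\gcd(s,t)=1$ forces all step-words to be aperiodic, so each cyclic orbit has size $s+t$ and contributes exactly one good path.
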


Going further, researchers considered simultaneous core partitions when the sequence of $s_i$ forms an arithmetic progression (see \cite{AL,BNY,CHS,Wang,YYZ,YZZ}). Recently, the authors \cite{CHS2} enumerated the $(s,s+d,\dots,s+pd)$-core partitions by giving a lattice path interpretation.

A \emph{free rational Motzkin path of type $(s,t)$} is a path from $(0,0)$ to $(s,t)$ consisting of up steps $U=(1,1)$, down steps $D=(1,-1)$, and flat steps $F=(1,0)$. We denote the set of all free rational Motzkin paths of type $(s,t)$ by $\mathcal{F}(s,t)$. In addition, if a free rational Motzkin path of type $(s,t)$ is allowed to stay only weakly above the line $y=x$, then it is called a \emph{rational Motzkin path} of type $(s,t)$.

\begin{thm}\cite[Theorem 1.5]{CHS2}\label{thm:rational}
Let $s$ and $d$ be relatively prime positive integers. For an integer $p\geq2$, the number of $(s,s+d,\dots,s+pd)$-core partitions is equal to the number of rational Motzkin paths of type $(s+d,-d)$ 
without $UF^iU$ steps for $i=0,1,\dots,p-3$ if $p\geq 3$, that is given by
\[
\frac{1}{s+d}\binom{s+d}{d}+\sum_{k=1}^{\lfloor s/2 \rfloor}\sum_{\ell=0}^{r}\frac{1}{k+d}\binom{k+d}{k-\ell}\binom{k-1}{\ell}\binom{s+d-\ell(p-2)-1}{2k+d-1}\,,
\]
where $r=\min(k-1,\,\lfloor (s-2k)/(p-2) \rfloor)$.
\end{thm}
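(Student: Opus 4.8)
The plan is to model these partitions on an abacus, convert the abacus into a lattice path, and then count the resulting paths. I would encode a partition $\la$ by its beta-set $B=\beta(\la)$ (its set of first-column hook lengths), so that $\la$ is a $t$-core exactly when $b\in B$ and $b\ge t$ force $b-t\in B$. A useful preliminary observation is that a $d$-core that is also an $s$-core is automatically an $(s+jd)$-core for every $j\ge 0$: if $b\in B$ with $b\ge s+jd$, then $b-s\in B$ and $b-s\ge jd$, so subtracting $d$ step by step (legal by the $d$-core property) gives $b-s-jd\in B$. Hence the $(s,s+d,\dots,s+pd)$-core partitions that happen to be $d$-cores are precisely the $(d,s)$-core partitions, of which there are $\frac{1}{s+d}\binom{s+d}{d}$ by Anderson's theorem; these will account for the isolated term of the formula. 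For the remaining partitions I would use the $(s+d,d)$-abacus to record, by an integer $k\ge 1$, how far $\la$ is from being a $d$-core (roughly, the number of beads pushed strictly past the $d$-core boundary), and then prove a reduction lemma: \emph{given that $\la$ is an $s$-core and an $(s+d)$-core, being additionally an $(s+2d)$-core, \dots, $(s+pd)$-core is equivalent to one local condition, namely that every gap of the abacus forced by the $d$-structure has length at least $p-2$.} This is where all $p-1$ of the moduli beyond $s$ are used.

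Next I would turn the abacus into a path. Reading the boundary of the $(s+d,d)$-abacus from left to right, record each unit as $U$, $D$, or $F$ according to whether the local bead/gap pattern (determined by the residue modulo $d$) encodes a bead beyond the $d$-core boundary, a missing bead, or neither. A bookkeeping count shows the resulting word has exactly $k$ up steps, $k+d$ down steps, and $s-2k$ flat steps, so it is a path of type $(s+d,-d)$; that $\la$ is an honest partition becomes the ``weakly above the line'' condition; and the gap bound from the reduction lemma becomes exactly the prohibition of the factors $UF^iU$ for $i=0,1,\dots,p-3$ (a $U$, then fewer than $p-2$ flats, then a $U$, with no intervening $D$, is precisely a short forced gap). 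Writing down the inverse map is routine, which establishes the bijection asserted in the statement.

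For the enumeration I would stratify by the number $k$ of up steps. The paths with $k=0$ use only $s$ flat steps and $d$ down steps; the ``weakly above the line'' condition becomes a cyclic ballot condition, and since $\gcd(s,d)=1$ all $s+d$ cyclic rotations of such a word are distinct, so the cycle lemma gives $\frac{1}{s+d}\binom{s+d}{d}$ of them. For $k\ge 1$, introduce the parameter $\ell$ counting those of the $k-1$ gaps between consecutive up steps that contain no down step (equivalently, the up/down subpath has $k-\ell$ maximal up-runs). For fixed $\ell$: there are $\binom{k-1}{\ell}$ choices of which up-gaps are ``tight''; a ballot/cycle-lemma count of the up/down subpaths compatible with this choice and staying above the line contributes the factor $\frac{1}{k+d}\binom{k+d}{k-\ell}$; and the forbidden-pattern condition forces $p-2$ flats into each of the $\ell$ tight gaps, after which the remaining $s-2k-\ell(p-2)$ flats are distributed freely among the $2k+d$ admissible slots cut out by the non-flat steps (the final slot is excluded, since the path cannot end with a flat), giving $\binom{s+d-\ell(p-2)-1}{2k+d-1}$ by stars and bars. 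The product of these three factors, summed over $0\le \ell\le r=\min(k-1,\lfloor (s-2k)/(p-2)\rfloor)$ — the two bounds being the number of available up-gaps and the requirement that the forced flats not exceed the $s-2k$ available ones — and then over $1\le k\le \lfloor s/2\rfloor$, together with the $k=0$ term, yields the formula. As a sanity check, for $p=2$ there are no forbidden patterns and the inner sum collapses by Vandermonde to $\frac{1}{2k+d}\binom{2k+d}{k}\binom{s+d-1}{2k+d-1}$, the expected ``rational Catalan $\times$ flat-insertion'' count.

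I expect the main obstacle to be the reduction lemma together with its path translation: one must check that the $s$-core and $(s+d)$-core conditions genuinely collapse the whole list $s,s+d,\dots,s+pd$ to the single ``short forced gap'' condition with nothing lost and nothing double-counted, and that this condition matches the window of forbidden words $UF^iU$, $i\le p-3$, exactly. A secondary delicate point in the count is justifying the factor $\frac{1}{k+d}\binom{k+d}{k-\ell}$ and verifying that the (individually non-integral) summands combine correctly — in effect, that the auxiliary parameter $\ell$ cleanly decouples the shape of the up/down subpath from the insertion of the flat steps.
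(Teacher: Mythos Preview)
This theorem is not proved in the present paper: it is quoted verbatim from the authors' earlier work \cite{CHS2} (their Theorem~1.5) as background for the self-conjugate results developed here. There is therefore no ``paper's own proof'' to compare against, and your proposal is really a reconstruction of the argument from \cite{CHS2}.

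That said, your outline is broadly on target and matches the machinery the current paper imports. The $(s+d,d)$-abacus is exactly the device used in \cite{CHS2}, and the present paper's $(s+d,d;a)$-abacus is the self-conjugate refinement of it; your ``reduction lemma'' (that the extra core conditions collapse to a minimum-gap condition on the abacus, which becomes the ban on $UF^iU$ factors) is the heart of the bijection in both papers, as is visible in Proposition~\ref{prop:f_initial}(c). Your stratification by $k$ and the auxiliary parameter $\ell$ reproduces the shape of the stated formula.

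Two places in your sketch would need tightening. First, the line defining ``rational'' in this paper is almost certainly not $y=x$ (that is a typo); the cycle-lemma counts you invoke only work once the correct boundary is identified, and the argument for the factor $\frac{1}{k+d}\binom{k+d}{k-\ell}$ needs the boundary condition stated precisely before it can be justified. Second, your assertion that ``the final slot is excluded, since the path cannot end with a flat'' is not obviously true for a general rational Motzkin path of type $(s+d,-d)$; you should check from \cite{CHS2} exactly which slot is suppressed and why (it may come from the abacus encoding rather than from the path constraint). These are genuine details to verify, not fatal gaps.
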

 
For a partition $\la$, the \emph{conjugate} of $\la$ is the partition whose Young diagram is the reflection along the main diagonal of the Young diagram of $\la$. A partition whose conjugate is equal to itself is called \emph{self-conjugate}. Some researchers have considered with self-conjugate simultaneous core partitions whose cores line up with an arithmetic progression. We denote the set of all self-conjugate $(s,s+d,\dots,s+pd)$-core partitions by $\mathcal{SC}_{(s,s+d,\dots,s+pd)}$. Ford-Mai-Sze \cite{FMS} first investigated self-conjugate $(s,t)$-core partitions and obtained the following result. 

\begin{thm}\cite[Theorem 1]{FMS} \label{thm:FMS}
For relatively prime positive integers $s$ and $t$, the number of self-conjugate $(s,t)$-core partitions is given by
\[
\binom{\lfloor \frac{s}{2} \rfloor + \lfloor \frac{t}{2} \rfloor}{\lfloor \frac{s}{2} \rfloor}\,.
\]
In particular, the number of self-conjugate $(s,s+1)$-core partitions is equal to the number of symmetric Dyck paths of order $s$, that is given by $\binom{s}{\lfloor s/2 \rfloor}$.
\end{thm}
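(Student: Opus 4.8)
\emph{Sketch of a proof plan.} The plan is to pass from a self-conjugate partition to the data of its diagonal hooks, to rewrite the two core conditions as elementary conditions on a single finite subset of $\mathbb{Z}_{\ge 0}$, and to identify those subsets with monotone lattice paths in an $\lfloor s/2\rfloor\times\lfloor t/2\rfloor$ rectangle. First I would use the classical description of self-conjugate partitions: such a $\lambda$ is determined by its set of principal (diagonal) hook lengths, a set of distinct positive odd integers, and hence by the finite set $A=\{\lambda_i-i : \lambda_i\ge i\}\subseteq\mathbb{Z}_{\ge 0}$. In beta-set (Maya diagram) language $A$ is the set of non-negative beads, and self-conjugacy is precisely the statement that the full bead set $B\subseteq\mathbb{Z}$ satisfies $x\in B\iff -1-x\notin B$; thus $B$, and hence $\lambda$, is recovered from $A$ by $B=A\sqcup\{-1-a : a\in\mathbb{Z}_{\ge 0}\setminus A\}$, and conversely every finite $A$ arises this way.

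Next I would translate the core conditions. Writing the $t$-core condition $B-t\subseteq B$ in terms of $A$ and using the symmetry of $B$, one finds that $\lambda$ is a $t$-core if and only if (i) $A$ is \emph{flush modulo $t$}, meaning that $a\in A$ and $a\ge t$ imply $a-t\in A$, and (ii) for every $x$ with $0\le x\le t-1$ the set $A$ contains at most one of $x$ and $t-1-x$ (so $A$ avoids $(t-1)/2$ when $t$ is odd); the analogous statement holds for the $s$-core condition. Hence a self-conjugate $(s,t)$-core is exactly a finite set $A\subseteq\mathbb{Z}_{\ge 0}$ satisfying (i)--(ii) for both $s$ and $t$. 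The flushness conditions say that $A$ is closed under subtracting $s$ and subtracting $t$ whenever the outcome is non-negative, while the anti-symmetry conditions (ii) pair up residues: among the $\lfloor s/2\rfloor$ pairs $\{x,s-1-x\}$ of residues modulo $s$, at most one residue from each pair is active in $A$, and similarly for the $\lfloor t/2\rfloor$ pairs modulo $t$. Using $\gcd(s,t)=1$ and the Chinese Remainder Theorem to compare the two flushness conditions, I would show that the admissible sets $A$ are in bijection with the order ideals of the product of a chain of length $\lfloor s/2\rfloor$ and a chain of length $\lfloor t/2\rfloor$ --- equivalently, with the monotone lattice paths across an $\lfloor s/2\rfloor\times\lfloor t/2\rfloor$ rectangle.

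Granting this bijection, the enumeration is immediate, since a product of two chains of sizes $a$ and $b$ has $\binom{a+b}{a}$ order ideals; with $a=\lfloor s/2\rfloor$ and $b=\lfloor t/2\rfloor$ this gives $\binom{\lfloor s/2\rfloor+\lfloor t/2\rfloor}{\lfloor s/2\rfloor}$. For $t=s+1$ one has $\lfloor s/2\rfloor+\lfloor(s+1)/2\rfloor=s$, yielding $\binom{s}{\lfloor s/2\rfloor}$, and that this also counts symmetric Dyck paths of order $s$ follows from the standard fact that a centrally symmetric Dyck path is freely determined by its first half, which ranges over all length-$s$ non-negative lattice paths. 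The main obstacle is the construction in the previous paragraph: the flushness conditions modulo $s$ and modulo $t$ interact through the Chinese Remainder Theorem, and one must check that the anti-symmetry conditions (ii) are simultaneously consistent with both, so that the active residue in each pair is forced coherently and the resulting poset is genuinely a product of two chains rather than a strictly smaller or larger poset. This requires a case analysis on the parities of $s$ and $t$ (both odd, or exactly one even --- both even is impossible) together with some bead bookkeeping; checking small cases such as $(s,t)=(3,4)$, $(3,7)$, $(2,5)$ against the formula is a helpful consistency check along the way.
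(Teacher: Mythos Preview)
The paper does not contain a proof of this statement: Theorem~\ref{thm:FMS} is quoted from Ford--Mai--Sze \cite{FMS} as background, with no argument given here. So there is no proof in the present paper to compare your attempt against.

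That said, your sketch is essentially the strategy of the original \cite{FMS} proof. The translation you give of the $t$-core condition on a self-conjugate partition --- flushness of $A$ modulo $t$ together with the anti-symmetry condition that $A$ meets each pair $\{x,\,t-1-x\}$ of residues in at most one element --- is exactly the content of what this paper records as Proposition~\ref{prop:FMS} (their \cite[Proposition~3]{FMS}), once one passes between $MD(\lambda)=\{2a+1:a\in A\}$ and your $A$. Ford--Mai--Sze then set up a ``doubled abacus'' and show that the simultaneous constraints cut out precisely the monotone lattice paths in an $\lfloor s/2\rfloor\times\lfloor t/2\rfloor$ grid, which is your order-ideal statement. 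The step you flag as the main obstacle --- verifying that the two anti-symmetry conditions interact coherently through the Chinese Remainder Theorem so that the resulting poset is genuinely a product of two chains --- is exactly where the work in \cite{FMS} lies, and your sketch does not yet supply that argument; but you have correctly identified both the target and the difficulty. The specialization to $t=s+1$ and the identification with symmetric Dyck paths is routine, as you say.
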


Motivated by Theorem \ref{thm:FMS}, the authors \cite{CHS} gave a formula for the number of self-conjugate $(s,s+1,s+2)$-core partitions by showing that it is equal to the number of symmetric Motzkin paths of length $s$.

\begin{thm}\cite[Theorem 4]{CHS}\label{thm:symMotzkin}
For a positive integer $s$, the number of self-conjugate $(s,s+1,s+2)$-cores is
$$\sum_{i\geq0}\binom{\lfloor \frac{s}{2}\rfloor}{i}\binom{i}{\lfloor \frac{i}{2} \rfloor},$$
which counts the number of symmetric Motzkin paths of length $s$.
\end{thm}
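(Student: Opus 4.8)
The plan is to pass to an abacus model for self-conjugate partitions and then read off the triple-core condition. Recall that a partition is encoded by its Maya diagram, the bi-infinite $0$--$1$ sequence $w=(w_i)_{i\in\mathbb{Z}}$ with $w_i=1$ exactly when $i=\la_j-j$ for some $j\ge1$; then $\la$ is self-conjugate precisely when $w$ is anti-symmetric about $-\tfrac12$, i.e.\ $w_i=1\iff w_{-1-i}=0$, and such sequences correspond bijectively to finite sets $P:=\{\,i\ge 0 : w_i=1\,\}$, with $|P|$ the number of diagonal boxes of $\la$. Being an $m$-core is equivalent to the stability condition $w_i=1\Rightarrow w_{i-m}=1$ for all $i$. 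Spelling this out for each $m\in\{s,s+1,s+2\}$ and using the anti-symmetry, each modulus contributes a push-up condition ($x\in P$ and $x\ge m\Rightarrow x-m\in P$) together with a reflection condition ($x\in P$ and $0\le x<m\Rightarrow m-1-x\notin P$); thus $\mathcal{SC}_{(s,s+1,s+2)}$ is in bijection with the finite subsets of $\mathbb{Z}_{\ge0}$ satisfying these six conditions.

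The first task is to confine the support: if $x\in P$ with $x\ge s$, then push-up for $m=s$ forces $x-s\in P$, and one checks that the remaining conditions for the three moduli then clash, so in fact $P\subseteq\{0,1,\dots,s-1\}$. For such $P$ every element satisfies $x<m$ for all three moduli, so only the three reflection conditions remain, and together they impose on the positions $0,1,\dots,s-1$ a system of ``at most one of $x,\,m-1-x$ lies in $P$'' constraints (with the self-paired position excluded whenever $m$ has the corresponding parity). The crux is to show that the finite subsets of $\{0,\dots,s-1\}$ obeying this system are in bijection with the symmetric Motzkin paths of length $s$---where ``symmetric'' means invariant under reversing the sequence of steps while exchanging $U$ and $D$---in such a way that the path's symmetry encodes the reflection conditions, its nonnegativity encodes the absence of forbidden hooks, and the number of up-steps records the number of corners of $\la$. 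I expect this reduction to be the main obstacle, the delicate point being to check that the conjunction of the three reflection conditions is precisely the symmetric-Motzkin constraint, with the correct behaviour for both parities of $s$ (in particular the forced central flat step when $s$ is odd).

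Finally, it remains to count symmetric Motzkin paths of length $s$. Such a path is determined by its first $\lfloor s/2\rfloor$ steps, which form an arbitrary Motzkin prefix (a lattice path with steps $U,D,F$ staying weakly above the $x$-axis): a central step, present when $s$ is odd, is fixed by the defining involution and hence forced to be flat, and the reversed-and-swapped second half of any nonnegative prefix is automatically nonnegative. Grouping Motzkin prefixes of length $\lfloor s/2\rfloor$ by their number $i$ of non-flat steps, there are $\binom{\lfloor s/2\rfloor}{i}$ ways to choose which steps are non-flat and $\binom{i}{\lfloor i/2\rfloor}$ ways to fill them with a length-$i$ walk of $\pm1$ steps starting at the origin that never goes negative; summing over $i$ yields $\sum_{i\ge0}\binom{\lfloor s/2\rfloor}{i}\binom{i}{\lfloor i/2\rfloor}$, which is therefore both the number of symmetric Motzkin paths of length $s$ and, by the bijection, the number of self-conjugate $(s,s+1,s+2)$-core partitions.
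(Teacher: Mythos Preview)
Your confinement step---the claim that every admissible $P$ lies inside $\{0,1,\dots,s-1\}$---is false. Take $s=6$ and $P=\{0,1,2,8\}$, which corresponds to the self-conjugate partition with main diagonal hook lengths $\{17,5,3,1\}$. All three push-up conditions hold ($8-6=2$, $8-7=1$, $8-8=0$ lie in $P$) and so do all the reflection conditions (the reflections of $0,1,2$ under $x\mapsto m-1-x$ for $m=6,7,8$ are $3,4,5,6,7$, none of which lie in $P$); hence this $P$ is admissible yet contains $8\ge s$. Main diagonal hooks of a self-conjugate $(s,s+1,s+2)$-core can genuinely exceed $2s-1$, so the sentence ``one checks that the remaining conditions for the three moduli then clash'' does not go through. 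Once confinement fails, your reduction to ``only the three reflection conditions remain on $\{0,\dots,s-1\}$'' collapses, and the proposed bijection with symmetric Motzkin paths of length $s$ has no correct source set to start from.

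Even setting this aside, the heart of the argument---the bijection between admissible $P$ and symmetric Motzkin paths---is only asserted (``I expect this reduction to be the main obstacle''), not built. The paper sidesteps both issues by using a different abacus: it lays out the labels $a+2(s+1)i+2j$ in a two-dimensional array and reads a path $\phi_{(s+1,1)}(\lambda)$ of length $\lfloor s/2\rfloor+1$ from how the beads sit column by column, obtaining (Theorem~\ref{thm:main} with $p=2$, $d=1$) a bijection with free rational Motzkin paths of type $(\lfloor s/2\rfloor+1,-1)$ not ending in $U$. Counting those paths (Theorem~\ref{thm:count2} at $d=1$) and rewriting the multinomial recovers the sum in Theorem~\ref{thm:symMotzkin}. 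The key design feature is that Proposition~\ref{prop:FMS} becomes a local step constraint on an abacus function, so the path is visible without any a~priori bound on the elements of $MD(\lambda)$---precisely the bound your approach tried, and failed, to impose.
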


The authors also suggested a conjecture about a relation between the set of self-conjugate $(s,s+1,\dots, s+p)$-core partitions and the set of symmetric $(s,p)$-generalized Dyck paths (see \cite{CHS, YYZ} for the definition of symmetric $(s,p)$-generalized Dyck paths). This conjecture was recently proved by Yan-Yu-Zhou \cite{YYZ}.

\begin{thm}\cite[Theorems 2.14, 2.19, and 2.22]{YYZ} \label{thm:YYZ}
For positive integers $s$ and $p$, the number of self-conjugate $(s,s+1,\dots,s+p)$-core partitions is equal to the number of symmetric $(s,p)$-generalized Dyck paths.
\end{thm}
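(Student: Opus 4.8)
The plan is to pass from partitions to the abacus (the bi-infinite $0/1$ boundary word) and reduce the statement to a finite bijection that intertwines conjugation with a $180^{\circ}$ rotation of lattice paths. Encode a partition $\la$ by the set $X(\la)=\{\la_i-i:i\ge 1\}\subseteq\mathbb{Z}$, so that $\la$ is a $t$-core exactly when $x\in X(\la)\Rightarrow x-t\in X(\la)$, and $\la$ is self-conjugate exactly when $X(\la)$ is ``complement-and-negate'' symmetric, that is $X(\la)=\{-1-y:y\in\mathbb{Z}\setminus X(\la)\}$. First I would note that being simultaneously an $(s,s+1,\dots,s+p)$-core is the same as being a core for the whole numerical semigroup $S=\langle s,s+1,\dots,s+p\rangle=\{0\}\cup\bigcup_{k\ge 1}[\,ks,\,k(s+p)\,]$, because closure under subtracting the generators forces closure under subtracting every element of $S$. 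Since $S$ contains all sufficiently large integers, the largest first-column hook length of such a $\la$ is at most the Frobenius number of $S$; hence $X(\la)$ is determined by its restriction to a bounded window around its center of symmetry, and a self-conjugate $(s,s+1,\dots,s+p)$-core is nothing but a symmetric finite $0/1$ word subject to finitely many local constraints.

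Next I would read that window as a lattice path in the way that underlies Theorem~\ref{thm:rational} with $d=1$: translating the symbols of the boundary word near the diagonal into steps $U=(1,1)$, $D=(1,-1)$, $F=(1,0)$ produces a rational Motzkin path of type $(s+1,-1)$, and the requirements that $\la$ have no hook of length $t$ for $t=s,s+1,\dots,s+p$ become, one value of $t$ at a time, the avoidance of the patterns $UF^{i}U$ with $i=0,1,\dots,p-3$ (a second up-step coming too soon after an up-step would close a hook whose length lies in $\{s,\dots,s+p\}$); this is exactly the translation used in \cite{CHS2}. The crucial extra ingredient, which I would isolate as a lemma, is that conjugating $\la$ --- which reverses and complements $X(\la)$ --- corresponds to the $180^{\circ}$ rotation of the associated path, namely reversing the step word and swapping $U\leftrightarrow D$ while fixing $F$. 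Granting this, the bijection behind Theorem~\ref{thm:rational}, specialized to $d=1$, carries self-conjugate cores exactly to the rotation-invariant paths obeying the same avoidances; via the standard dictionary between such Motzkin-type paths and generalized Dyck paths, the rotation-invariant ones are precisely the symmetric $(s,p)$-generalized Dyck paths of \cite{CHS,YYZ}. For $p=1$ this recovers the symmetric Dyck paths of Theorem~\ref{thm:FMS}, and for $p=2$ the symmetric Motzkin paths of Theorem~\ref{thm:symMotzkin}. (Alternatively, one can build an abacus tailored to self-conjugate partitions and read the path off it directly, bypassing the non-self-conjugate bijection altogether.)

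Two technical wrinkles remain. First, the parity of $s$ must be tracked: whether the center of the window falls on a bead or a gap determines whether the symmetric path has an even or an odd ``length'' and whether it carries a forced central flat step, which reflects the genuine difference between the self-conjugate constraint imposed by an even modulus and by an odd one, together with the way the parity of $s$ splits the moduli $s,s+1,\dots,s+p$ between even and odd. Second --- and here I expect the main difficulty --- one must verify \emph{surjectivity}: that every $180^{\circ}$-symmetric path avoiding $UF^{i}U$ for $i\le p-3$ unfolds to an \emph{honest} self-conjugate partition that is simultaneously a core for all of $s,s+1,\dots,s+p$, i.e.\ that the finitely many local avoidance conditions at the diagonal are not merely necessary but suffice to kill every hook of forbidden length throughout the diagram. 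Pinning down a window large enough for this equivalence to hold, and checking that the unfolding is compatible with the complement-and-negate symmetry, is the heart of the argument; the remainder is bookkeeping on $0/1$ sequences.
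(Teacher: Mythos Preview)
The paper does not prove this theorem. Theorem~\ref{thm:YYZ} is quoted from \cite{YYZ} as prior work (``This conjecture was recently proved by Yan--Yu--Zhou''); no argument for it appears in the present paper, and the paper's own development neither uses nor reproves it. What the paper actually proves is Theorem~\ref{thm:main}, a bijection from $\mathcal{SC}_{(s,s+d,\dots,s+pd)}$ to certain \emph{free} rational Motzkin paths of type $(\lfloor s/2\rfloor+\lceil d/2\rceil,-\lceil d/2\rceil)$, built not from the beta-set/boundary word but from the main-diagonal hook set $MD(\la)$ via the $(s+d,d;a)$-abacus of Section~\ref{sec:2}. Even specialized to $d=1$, that target is not the set of symmetric $(s,p)$-generalized Dyck paths, so the paper gives an independent lattice-path model rather than a proof of Theorem~\ref{thm:YYZ}.

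As for your sketch itself: the strategy ``take the bijection of Theorem~\ref{thm:rational}, show conjugation is the $180^{\circ}$ rotation, and pass to fixed points'' is natural, but there is a genuine obstruction you have not addressed. The paths in Theorem~\ref{thm:rational} with $d=1$ have type $(s+1,-1)$; reversing the step word and swapping $U\leftrightarrow D$ sends such a path to one of type $(s+1,+1)$, so no path is literally fixed by that involution, and ``rotation-invariant'' has no direct meaning on this set. One must first identify the correct involution on $\mathcal{F}(s+1,-1)$ induced by conjugation (it is not the naive reverse-and-swap) and then translate its fixed points into the symmetric $(s,p)$-generalized Dyck paths of \cite{CHS,YYZ}; both steps require the precise definition of those paths and a nontrivial dictionary that you have only asserted. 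Your parenthetical alternative---building an abacus tailored to self-conjugate partitions---is exactly what the present paper does, via $MD(\la)$ and Propositions~\ref{prop:f_initial}--\ref{prop:f_eo}, but it leads to Theorem~\ref{thm:main} rather than to Theorem~\ref{thm:YYZ}.
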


By using the above theorem together with an another path interpretation, the authors \cite{CHS2} found a formula for the number of self-conjugate $(s,s+1,\dots, s+p)$-core partitions.

\begin{thm}\cite[Theorem 3.9]{CHS2}\label{thm:selfone}
For positive integers $s$ and $p\geq 2$, the number of self-conjugate $(s,s+1,\dots,s+p)$-core partitions is given by
\[
1+\sum_{k=1}^{\lfloor s/2 \rfloor}\sum_{\ell=0}^{r}\binom{\lfloor \frac{k-1}{2} \rfloor}{\lfloor \frac{\ell}{2} \rfloor} \binom{\lfloor \frac{k}{2} \rfloor}{\lfloor \frac{\ell+1}{2} \rfloor}\binom{\lfloor \frac{s-\ell(p-2)}{2} \rfloor}{k}\,,
\]
where $r=\min(k-1,\,\lfloor (s-2k)/(p-2) \rfloor)$.
\end{thm}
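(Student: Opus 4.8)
The plan is to recognize the stated formula as the symmetrization of Theorem~\ref{thm:rational} in the case $d=1$, and to prove it by enumerating the conjugation-fixed objects in that model. Since conjugation of partitions preserves the multiset of hook lengths, the set of $(s,s+1,\dots,s+p)$-core partitions is closed under conjugation, and $\mathcal{SC}_{(s,s+1,\dots,s+p)}$ is exactly its fixed-point set; under the bijection of Theorem~\ref{thm:rational} this involution translates into an explicit reversal/reflection of the rational Motzkin paths of type $(s+1,-1)$ avoiding $UF^iU$ for $i=0,1,\dots,p-3$. (Alternatively, one may start from Theorem~\ref{thm:YYZ}, since a symmetric $(s,p)$-generalized Dyck path is determined by its first half.) In either case, such a symmetric path is determined by its initial $\lfloor s/2\rfloor$ steps together with its behaviour at the axis of symmetry, so $|\mathcal{SC}_{(s,s+1,\dots,s+p)}|$ equals the number of ``half-paths'' of length $\lfloor s/2\rfloor$ with steps $U,D,F$ that satisfy the inherited pattern-avoidance together with the parity constraint that any step not adjacent to the axis is matched by its mirror image and hence contributes in pairs. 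The floor functions in the formula are precisely the residues produced by this pairing.

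Next I would enumerate these half-paths by two statistics: the number $k$ of up-steps and a parameter $\ell$ counting the ``active'' occurrences of the forbidden factors, namely the number of maximal flat runs flanked by two up-steps that must be padded to length at least $p-2$ in order to destroy every $UF^iU$ with $i\le p-3$. For fixed $k$ and $\ell$: after reserving the $\ell(p-2)$ forced flat steps, the half-path has $\lfloor (s-\ell(p-2))/2\rfloor$ free positions into which the $k$ up-steps are inserted, contributing $\binom{\lfloor (s-\ell(p-2))/2\rfloor}{k}$; the $\ell$ active blocks must then be distributed among the gaps between consecutive up-steps, and self-conjugacy forces this distribution to be symmetric about the axis, which splits the count --- in a manner governed by the parity of the central step --- into the two factors $\binom{\lfloor (k-1)/2\rfloor}{\lfloor \ell/2\rfloor}$ and $\binom{\lfloor k/2\rfloor}{\lfloor (\ell+1)/2\rfloor}$. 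The range $0\le\ell\le r=\min(k-1,\lfloor (s-2k)/(p-2)\rfloor)$ records that each active block needs two flanking up-steps ($\ell\le k-1$) and that the padding together with the $2k$ positions carrying the up/down structure must fit in the available length ($\ell(p-2)\le s-2k$). Summing over $k\ge1$ and $0\le\ell\le r$ and adding the $1$ that accounts for the $k=0$ contribution (a single distinguished self-conjugate core, the empty partition, corresponding to the all-flat half-path) yields the formula; note moreover that at $d=1$ the leading term $\frac{1}{s+d}\binom{s+d}{d}$ of Theorem~\ref{thm:rational} already equals $1$, matching this isolated summand.

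The step I expect to be the main obstacle is the bookkeeping at the axis of symmetry: one must check that folding a symmetric path neither creates nor destroys a forbidden factor straddling the centre, determine precisely which central configurations (flat, up--down, and so on) are admissible as a function of the parities of $s$, $k$ and $\ell$, and verify that the asymmetric split $(\lfloor \ell/2\rfloor,\lfloor (\ell+1)/2\rfloor)$ --- rather than its mirror image --- is the one genuinely imposed by the position of the central step. Once this parity analysis is in place, the decomposition into the three binomial factors is a routine stars-and-bars count and the double summation over $k$ and $\ell$ is elementary.
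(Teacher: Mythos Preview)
This theorem is not proved in the present paper: it is quoted from \cite{CHS2}, and the only information given here about its proof is the sentence ``By using the above theorem [Theorem~\ref{thm:YYZ}] together with an another path interpretation, the authors \cite{CHS2} found a formula\dots''. So there is no in-paper proof to compare against directly; one can only compare your outline to the route the paper \emph{describes}, namely: start from the Yan--Yu--Zhou bijection with \emph{symmetric} $(s,p)$-generalized Dyck paths, then reinterpret those symmetric paths combinatorially to extract the stated binomial sum.

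Relative to that, your proposal has one genuine gap. You assert that ``under the bijection of Theorem~\ref{thm:rational} this involution translates into an explicit reversal/reflection of the rational Motzkin paths''. That compatibility is not established anywhere in this paper, and it is not automatic: the bijection of \cite{CHS2} is built from an abacus construction, and there is no a~priori reason it should send conjugation of partitions to the reverse-and-flip involution on paths. Without that lemma your first route collapses, because the fixed-point set on the path side need not be the set of ``symmetric'' paths at all. Your alternative route through Theorem~\ref{thm:YYZ} avoids this problem --- symmetry is built into the statement --- and is in fact the route \cite{CHS2} takes; but then the work is precisely the ``bookkeeping at the axis of symmetry'' that you flag as the main obstacle and do not carry out. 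In particular, the justification of why the block distribution splits as $\binom{\lfloor (k-1)/2\rfloor}{\lfloor \ell/2\rfloor}\binom{\lfloor k/2\rfloor}{\lfloor (\ell+1)/2\rfloor}$ rather than some other pairing of floors is exactly the case analysis on the central step that your outline postpones. So as written the proposal is a plausible plan pointing at the right objects, but the one substantive step (either proving the conjugation/reflection compatibility, or doing the parity analysis at the axis) is missing.
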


As there was no known result on self-conjugate $(s,s+d,\dots,s+pd)$-core partitions with $d\geq 1$ so far, we investigate some properties about them. This paper is organized as follows. In Section~\ref{sec:2}, we first introduce the ``$(s+d,d;a)$-abacus diagram" and define the ``$(s+d,d;a)$-abacus function" of a self-conjugate partition $\la$ and then investigate properties of the $(s+d,d;a)$-abacus function of a self-conjugate $(s,s+d,\dots, s+pd)$-core partition. After that, in Section~\ref{sec:3.1}, we give a lattice path interpretation for a self-conjugate $(s,s+d,\dots,s+pd)$-core partition by constructing injections from the set of self-conjugate $(s,s+d,\dots,s+pd)$-core partitions to the set of free rational Motzkin paths. The following theorem is the main result of this paper.

\begin{thm}\label{thm:main}
Let $s$ and $d$ be relatively prime positive integers. For a given integer $p\geq 2$, the mapping $\phi_{(s+d,d)}$ gives a one-to-one correspondence between $\mathcal{SC}_{(s,s+d,\dots,s+pd)}$ and the set of paths $P\in\mathcal{F}(\lfloor s/2 \rfloor+\lceil d/2 \rceil, -\lceil d/2 \rceil)$ satisfying that i) $P$ has no $UF^iU$ as a consecutive subpath for all $i=0,1,\dots,p-3$ if $p\geq 3$; ii) $P$ starts with no $F^jU$; iii) $P$ ends with no $UF^k$ for all   
\begin{enumerate}
\item [(a)] $j=0,1,\dots,\lfloor (p-4)/2 \rfloor$ if $p\geq 4$ and $k=0,1,\dots, \lfloor (p-3)/2 \rfloor$ if $p\geq 3$, when $s$ is odd and $d$ is even;
\item [(b)] $j=0,1,\dots,\lfloor (p-4)/2 \rfloor$ if $p\geq 4$ and $k=0,1,\dots, p-2$, when $s$ is odd and $d$ is odd;
\item[(c)] $j=0,1, \dots, \lfloor (p-3)/2 \rfloor$ if $p\geq 3$ and $k=0,1, \dots, p-2$, when $s$ is even and $d$ is odd.
\end{enumerate}
\end{thm}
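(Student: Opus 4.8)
The plan is to build the correspondence $\phi_{(s+d,d)}$ through the combinatorial machinery of the $(s+d,d;a)$-abacus developed in Section~\ref{sec:2}. First I would recall that a self-conjugate partition $\la$ is entirely determined by its set of first-column hook lengths (equivalently, by its beta-set), and that the self-conjugacy condition translates into a symmetry of this set about a fixed point. The key technical input is the $(s+d,d;a)$-abacus function, which records, for each residue class, whether the corresponding bead is occupied; I would use the properties of this function for self-conjugate $(s,s+d,\dots,s+pd)$-core partitions established earlier in Section~\ref{sec:2} (the analogue for $d=1$ being implicit in the $(s,t)$-core literature of Ford-Mai-Sze and in \cite{CHS2}). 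The map $\phi_{(s+d,d)}$ will read off a lattice path from this abacus function: occupied/unoccupied positions on the two ``rows'' of the $(s+d,d;a)$-abacus will be translated step-by-step into $U$, $D$, and $F$ steps, so that the path lives in $\mathcal{F}(\lfloor s/2\rfloor+\lceil d/2\rceil,-\lceil d/2\rceil)$; the endpoint is forced by a bead-counting argument that distinguishes the three parity cases for $(s,d)$.

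Next I would verify the three families of forbidden patterns. The condition (i) that $P$ avoids $UF^iU$ for $i=0,\dots,p-3$ is exactly the abacus translation of the $(s+pd)$-core condition: being an $(s,s+d,\dots,s+pd)$-core beyond being an $(s,s+d)$-core imposes that no ``gap'' between two occupied beads on one track has a prescribed small length, which is precisely a $UF^iU$ in the path. This is the same mechanism as in the proof of Theorem~\ref{thm:rational}, and I would invoke that correspondence mutatis mutandis, the self-conjugacy only halving the data. Conditions (ii) and (iii) — the prohibition of initial $F^jU$ and terminal $UF^k$ — are the new features: they come from the interaction between the self-conjugacy symmetry and the boundary of the abacus. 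Concretely, the reflection that encodes self-conjugacy identifies the beginning of the path with (a reversal of) its end, so a core condition that forbids a short gap straddling the center of symmetry becomes a condition forbidding a short run of flat steps adjacent to an up step at \emph{either} end of $P$. The asymmetry between the bounds on $j$ and $k$, and their dependence on the parities of $s$ and $d$, arises because the center of the symmetry sits either on a bead, between two beads, or is shifted by the offset $a$, depending on the parity case; I would treat cases (a), (b), (c) in turn, in each case locating the fixed point of the involution and computing how a forbidden gap of length $\le p$ there projects onto forbidden prefixes/suffixes.

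I would then prove bijectivity by exhibiting the inverse: given a path $P\in\mathcal{F}(\lfloor s/2\rfloor+\lceil d/2\rceil,-\lceil d/2\rceil)$ satisfying (i)–(iii), read the steps to reconstruct an abacus function, use the symmetry to fill in the mirror track, and check that the resulting bead configuration corresponds to a genuine partition that is self-conjugate and simultaneously an $(s+id)$-core for $i=0,\dots,p$. The forbidden-pattern conditions are designed so that each of the $p+1$ core conditions is met: the $(s,s+d)$-core part is automatic from the path being of the right type (as in Anderson's and Ford-Mai-Sze's setting), and conditions (i)–(iii) supply the remaining $p-1$. Well-definedness of the partition (non-increasing parts, no negative entries) follows from $P$ being a genuine free Motzkin path with the prescribed endpoint.

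The main obstacle, I expect, will be the careful case analysis for conditions (ii) and (iii): pinning down exactly where the center of the self-conjugacy involution lies on the $(s+d,d;a)$-abacus in each parity case, and translating a ``no short gap through the center'' statement into the precise ranges $j=0,\dots,\lfloor(p-4)/2\rfloor$ versus $j=0,\dots,\lfloor(p-3)/2\rfloor$ and $k=0,\dots,\lfloor(p-3)/2\rfloor$ versus $k=0,\dots,p-2$. This is a parity bookkeeping problem rather than a conceptual one, but it is where an off-by-one error would be easy to make; I would double-check the ranges against the small cases $p=2$ (where all three families of conditions are empty, matching that $\mathcal{SC}_{(s,s+d)}$ is counted by Theorem~\ref{thm:FMS} and $\mathcal{F}(\lfloor s/2\rfloor+\lceil d/2\rceil,-\lceil d/2\rceil)$ has no further restrictions) and $p=3$ against Theorem~\ref{thm:symMotzkin} when $d=1$.
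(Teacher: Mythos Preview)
Your overall architecture — encode $\la$ by its $(s+d,d;a)$-abacus function and read off a Motzkin path from successive differences — is exactly the paper's, and your explanation of condition (i) via the higher core conditions is essentially Proposition~\ref{prop:f_initial}(c). But two ingredients are off in ways that would prevent the argument from going through.

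First, your picture of the abacus is not quite right. The $(s+d,d;a)$-abacus has infinitely many rows indexed by $i\in\mathbb{Z}$, not two, and the abacus function $f(j)$ is an \emph{integer} (the height of the top bead or top negative spacer in column $j$), not a Boolean ``occupied/unoccupied.'' The steps of $P$ are the differences $f(j)-f(j-1)\in\{-1,0,1\}$ (Proposition~\ref{prop:f_initial}(b)), which is where $U$, $F$, $D$ come from.

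Second, and more seriously, your mechanism for conditions (ii) and (iii) is incorrect. There is no reflection identifying the beginning of $P$ with its end; the paths produced are \emph{not} symmetric (see Example~\ref{ex:abacus_oe}, where $\phi_{(25,4)}(\la)=FDUFFUDDDDUF$). Self-conjugacy is already absorbed into the use of $MD(\la)$ and the design of the abacus; it does not manifest as a symmetry of $P$. In the paper, the start and end restrictions arise separately and locally: position $(1,\ell+1)$ carries the label $s+(2\ell+4)d$ (or $s+(2\ell+3)d$ when $s$ is even), which is one of the forbidden hook lengths, forcing $f(\ell+1)\le 0$ and hence no $F^\ell U$ prefix (Propositions~\ref{prop:f_oe}(c), \ref{prop:f_oo}(c), \ref{prop:f_eo}(c)); analogously, positions near the last column carry labels like $-s-(2k+3)d$ or $-\{(s+(k+2)d)+(s+(k+3)d)\}$, which are excluded by Corollary~\ref{cor:sum}, forcing the $UF^k$ suffix restrictions (Propositions~\ref{prop:f_oe}(b), \ref{prop:f_oo}(b), \ref{prop:f_eo}(b)). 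The asymmetry between the ranges of $j$ and $k$ reflects the different labels near column $0$ versus near column $\lfloor (s+d-1)/2\rfloor$, not the location of a centre of symmetry.

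Finally, your sanity check at $p=2$ is wrong for odd $d$: in cases (b) and (c) the condition ``$P$ ends with no $U$'' (i.e.\ $k=0$) is already present at $p=2$, consistent with the extra terminal step appended when $d$ is odd.
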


As a corollary, we give formulae for the number of self-conjugate $(s,s+d,s+2d)$-core partitions and the number of self-conjugate $(s,s+d,s+2d,s+3d)$-core partitions in Section~\ref{sec:3.2}. 
In Section \ref{sec:3.3}, we focus on self-conjugate $(s,s+1,\dots s+p)$-core partitions with $m$ corners. We specify free rational Motzkin paths corresponding to self-conjugate $(s,s+1,\dots,s+p)$-core partitions with $m$ corners. In particular, we count the number of self-conjugate $(s,s+1,s+2)$-core partitions with $m$ corners.


\section{The $(s+d,d;a)$-abacus diagram}\label{sec:2}

Let $\la$ be a partition. The $s$-abacus of $\la$, introduced by James-Kerber \cite{JK}, has played important roles in the theory of core partitions (see \cite{AHJ, Fayers, Fayers2, Johnson2, NS}).  
In \cite{CHS2}, the authors introduced the $(s+d,d)$-abacus of $\la$, which is useful for determining whether $\la$ is an $(s,s+d, \dots, s+pd)$-core.
Now, we slightly modify the $(s+d,d)$-abacus of $\la$ to get an abacus, which is useful when we deal with \emph{self-conjugate} $(s,s+d, \dots, s+pd)$-core partitions.     
Note that if $\la$ is a self-conjugate partition, then elements in $MD(\la)$ are all distinct and odd, where 
$MD(\la)$ denotes the set of the main diagonal hook lengths of $\la$. Let $\mathbb{Z}$ denote the set of all integers.

\begin{defn}  
Let $s$ and $d$ be relatively prime positive integers. 
We define the \emph{$(s+d,d;a)$-abacus diagram} to be
the bottom and left justified diagram with infinitely many rows labeled by $i\in \mathbb{Z}$ and $\lfloor (s+d+1)/2 \rfloor$ columns labeled by $j\in\{0,1,\dots,\lfloor (s+d-1)/2 \rfloor\}$ whose position $(i,j)$ is labeled by $a+2(s+d)i+2dj$, 
where $a=a(s,d)$ denotes the integer such that $-a$ is the smallest odd number among $s$ and $s+d$.

For a self-conjugate partition $\la$, the \emph{$(s+d,d;a)$-abacus of $\la$} is obtained from the $(s+d,d;a)$-abacus diagram by placing a \emph{bead} on each position labeled by $\ell$, where $|\ell|\in MD(\la)$. A position without bead is called a \emph{spacer}.
\end{defn}

The following proposition shows that, for self-conjugate partitions $\la$, the $(s+d,d;a)$-abacus of $\la$ are well-defined and all distinct.
In addition, if $\la$ is an $(s,s+d,\dots,s+pd)$-core, then there are exactly $|MD(\la)|$ beads on
the $(s+d,d;a)$-abacus of $\la$.

\begin{prop} \label{prop:injection}
Let $s$ and $d$ be relatively prime positive integers. For every positive odd integer $h$ such that $h\not\equiv s+d \pmod{2(s+d)}$, there exist a unique position labeled by $h$ or $-h$ in the $(s+d,d;a)$-abacus diagram.
\end{prop}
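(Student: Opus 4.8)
The plan is to recast the statement as a counting/bijection problem modulo $2(s+d)$, separating the roles of the column index $j$ and the ``sign'' of the label. First I would observe that the position $(i,j)$ carries the label $a + 2(s+d)i + 2dj$, so as $i$ ranges over $\mathbb{Z}$ the labels in column $j$ form a single residue class modulo $2(s+d)$, namely $a + 2dj \pmod{2(s+d)}$. Since $\gcd(s,d)=1$ forces $\gcd(2d, 2(s+d))=2$ (note $s+d$ has the opposite parity to exactly one of $s,s+d$, but in any case $\gcd(d,s+d)=1$), the map $j \mapsto a+2dj \pmod{2(s+d)}$ is injective on $j\in\{0,1,\dots,\lfloor(s+d-1)/2\rfloor\}$ and its image consists of $\lfloor(s+d+1)/2\rfloor$ distinct residues, all of the same parity as $a$ (hence all odd, since $-a$ is odd by definition). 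So the union of all columns realizes, each exactly once, a set $R$ of $\lfloor(s+d+1)/2\rfloor$ odd residue classes mod $2(s+d)$.

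The key step is then to identify $R$ precisely. I would show $R = \{\, r \bmod 2(s+d) : r \text{ odd},\ 1\le |r'| \le \text{something}\,\}$; more concretely, I expect that the odd residues mod $2(s+d)$ split into those congruent to $s+d$ (there is exactly one such class if $s+d$ is odd, none if $s+d$ is even) and the remaining ones, which pair up as $\{r, -r\}$ since $r \not\equiv -r$ whenever $r\not\equiv 0, s+d$. Counting: there are $s+d$ odd residues mod $2(s+d)$ in total when... — more carefully, the number of odd residues mod $2(s+d)$ is $s+d$, of which at most one equals the class of $s+d$, leaving an even number $s+d$ or $s+d-1$ that pair into $\{r,-r\}$ doubletons; this gives $\lfloor(s+d)/2\rfloor$ unordered pairs, plus possibly the self-paired class $s+d$. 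Since $\lfloor(s+d+1)/2\rfloor = \lfloor(s+d)/2\rfloor + [\,s+d\text{ odd}\,]$, the count of residue classes in $R$ exactly matches the count of $\{r,-r\}$-orbits among admissible odd residues. I would then argue $R$ contains exactly one representative of each such orbit: it cannot contain both $r$ and $-r$ (they would force $2r\equiv 0$, impossible for admissible $r$), and since the orbit count equals $|R|$, every orbit is hit. The class $s+d$, if $s+d$ is odd, is explicitly excluded from $R$ — this needs a short check that $a+2dj\equiv s+d$ has no solution in the allowed range of $j$, which follows because $a\equiv -s$ or $-s-d$ and solving $2dj\equiv s+d+a$ would put $j$ outside $\{0,\dots,\lfloor(s+d-1)/2\rfloor\}$.

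Translating back: given a positive odd $h$ with $h\not\equiv s+d\pmod{2(s+d)}$, exactly one of the classes $h, -h \pmod{2(s+d)}$ lies in $R$, say it is realized in column $j_0$; then there is a unique row $i_0$ with $a+2(s+d)i_0+2dj_0 \in \{h,-h\}$ (uniqueness of $i_0$ since consecutive entries of a column differ by $2(s+d)$ and exactly one lands on $h$ or $-h$ once the residue is fixed). This gives the desired unique position. The main obstacle I anticipate is the bookkeeping around parity of $s+d$ and the exact value of $a$: the two cases ($s$ odd, $s+d$ even) and ($s$ even, $s+d$ odd), plus the boundary behavior of the column index, need to be handled so that the counts $\lfloor(s+d+1)/2\rfloor$ columns versus the number of $\{r,-r\}$-orbits line up on the nose; everything else is a pigeonhole argument once that matching is established.
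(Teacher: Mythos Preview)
Your overall framework is the right one and matches the paper's: reduce to residue classes modulo $2(s+d)$, so that column $j$ corresponds to the class of $a+2dj$, and then argue that the set $R=\{a+2dj \bmod 2(s+d): 0\le j\le \lfloor(s+d-1)/2\rfloor\}$ meets each $\{r,-r\}$-orbit of odd residues exactly once. The counting you set up is also correct: $|R|=\lfloor(s+d+1)/2\rfloor$ equals the total number of such orbits (including the singleton $\{s+d\}$ when $s+d$ is odd).

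However, two of your concrete claims are wrong, and they hide the real work of the proof.

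First, the assertion that $R$ cannot contain both $r$ and $-r$ ``because they would force $2r\equiv 0$'' is not a valid deduction. If $a+2dj_1\equiv r$ and $a+2dj_2\equiv -r$ with $j_1\neq j_2$, what you actually get is $2a+2d(j_1+j_2)\equiv 0\pmod{2(s+d)}$, i.e.\ $a+d(j_1+j_2)\equiv 0\pmod{s+d}$. Ruling this out for $j_1,j_2$ in the allowed range requires plugging in the value of $a$ and doing a short case analysis on the parity of $s$ --- which is exactly the computation the paper carries out, and which you have not done. Without it, the pigeonhole argument does not close.

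Second, your claim that the class $s+d$ is excluded from $R$ is false. When $s$ is odd and $d$ even, one has $a=-s$ and $-s+2d\cdot\frac{s+d-1}{2}=(d-1)(s+d)\equiv s+d\pmod{2(s+d)}$, so $j=(s+d-1)/2$ lands on $s+d$; when $s$ is even and $d$ odd, $a=-(s+d)$ and already $j=0$ gives $a\equiv s+d$. In fact $s+d\in R$ whenever $s+d$ is odd, and this is precisely what makes your orbit count match $|R|$. The statement of the proposition excludes $h\equiv s+d$ not because that class is missing from the abacus, but because in that column the label and its negative coincide (so uniqueness in the sense of ``$h$ or $-h$'' becomes degenerate). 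Once you correct these two points, your argument becomes essentially the paper's proof.
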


\begin{proof}
For the $(s+d,d;a)$-abacus diagram, positions in column $j$ are labeled by $a+2(s+d)i+2dj$ for $i\in \mathbb{Z}$. Note that the absolute value of these labels are congruent to $a+2dj$ or $2(s+d)-a-2dj$ modulo $2(s+d)$. We claim that $a+2dj$ and $2(s+d)-a-2dj$ for  $j\in\{0,1,\dots, \lfloor (s+d-1)/2 \rfloor$\} are all incongruent modulo $2(s+d)$ except $a+2dj\equiv s+d \pmod{2(s+d)}$. 
First, for $0 \leq j_1 < j_2\leq \lfloor (s+d-1)/2 \rfloor$, it is clear that $a+2dj_1$ and $a+2dj_2$ are incongruent modulo $2(s+d)$.
Now, suppose that $a+2dj_1 \equiv 2(s+d)-a-2dj_2 \pmod{2(s+d)$} for some $0 \leq j_1,j_2\leq \lfloor (s+d-1)/2 \rfloor$. It follows that $2a+2d(j_1+j_2)$ is a multiple of $2(s+d)$. We consider two cases according to the parity of $s$:

If $s$ is odd, $a$ is supposed to be $-s$ so that $-2s+2d(j_1+j_2)$ is a multiple of $2(s+d)$. It follows that $d(j_1+j_2+1)$ is a multiple of $s+d$. Since $s$ and $d$ are relatively prime, the only possibility is that $j_1=j_2=(s+d-1)/2$ when $d$ is even. In this case, $-s+2d(s+d-1)/2 \equiv s+d \pmod{2(s+d)}$.

If $s$ is even, $a$ is supposed to be $-s-d$ so that $-2s-2d+2d(j_1+j_2)$ is a multiple of $2(s+d)$. It follows that $d(j_1+j_2)$ is a multiple of $s+d$. Since $s$ and $d$ are relatively prime, the only possibility is that $j_1=j_2=0$. Note that $-s-d \equiv s+d \pmod{2(s+d)}$.

This completes the proof of the claim. From the claim we have that, for every odd integer $h$, there exists $j\in\{0,1,\dots, \lfloor (s+d-1)/2 \rfloor\}$ such that $h$ is congruent to $a+2dj$ or $2(s+d)-a-2dj$ modulo $2(s+d)$. In addition, if $h\not\equiv s+d \pmod{2(s+d)}$, then there exists a unique position labeled by $h$ or $-h$ in the $(s+d,d;a)$-abacus diagram. 
\end{proof}

By using the following proposition, we investigate the properties of the $(s+d,d;a)$-abacus of a self-conjugate $(s,s+d, \dots, s+pd)$-core partition according to the parity of $s$ and $d$. 

\begin{prop}\cite[Proposition 3]{FMS}\label{prop:FMS}
Let $\la$ be a self-conjugate partition. Then $\la$ is an $s$-core if and only if both of the following hold:
\begin{enumerate}
\item[(a)] If $h\in MD(\la)$ with $h>2s$, then $h-2s\in MD(\la)$.
\item[(b)] If $h_1,h_2\in MD(\la)$, then $h_1+h_2\not\equiv 0 \pmod{2s}$. 
\end{enumerate}
\end{prop}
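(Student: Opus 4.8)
The plan is to pass from $MD(\la)$ to the set of first-column hook lengths of $\la$, on which being an $s$-core has a transparent description, and then to exploit the symmetry that this set inherits from self-conjugacy. Write $\ell=\la_1$, which equals the number of parts of $\la$ since $\la$ is self-conjugate, so that
\[
B=\{\la_i+\ell-i:1\le i\le\ell\}\subseteq\{0,1,\dots,2\ell-1\}
\]
is a beta-set of $\la$ with exactly $\ell$ beads. I will use two standard facts about beta-sets: first, $\la$ is an $s$-core if and only if $B$ is closed under subtracting $s$, i.e.\ $b\in B$ and $b\ge s$ imply $b-s\in B$ (valid because $\ell$ equals the number of parts of $\la$); second, self-conjugacy of $\la$ forces $B$ to be \emph{symmetric}, meaning that for $0\le x\le 2\ell-1$ one has $x\in B$ if and only if $2\ell-1-x\notin B$. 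The bridge to $MD(\la)$ is the identity $h(i,i)=2(\la_i-i)+1$: the elements of $B$ lying in $\{\ell,\dots,2\ell-1\}$ are precisely the $\ell+(h-1)/2$ for $h\in MD(\la)$, and, by the symmetry, the elements of $B$ lying in $\{0,\dots,\ell-1\}$ are precisely the $\ell-1-(n-1)/2$ for the odd integers $n\in\{1,3,\dots,2\ell-1\}$ that do not lie in $MD(\la)$.

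With this dictionary in hand, I would translate the closure condition on $B$ into conditions on $MD(\la)$ by distinguishing ``large'' beads ($b\ge\ell$) from ``small'' beads ($b<\ell$). For a large bead $b=\ell+(h-1)/2$ with $h\in MD(\la)$ and $b\ge s$: a short check on ranges shows that $b-s\ge\ell$ holds exactly when $h>2s$, in which case $b-s\in B$ translates to $h-2s\in MD(\la)$, which is condition (a); whereas if $h\le 2s$ then $0\le b-s<\ell$, and, via the symmetry, $b-s\in B$ becomes $2s-h\notin MD(\la)$, i.e.\ there is no pair $h,h'\in MD(\la)$ with $h+h'=2s$. For a small bead, an analogous but shorter computation shows that its closure requirement reads ``$n\notin MD(\la)$ implies $n+2s\notin MD(\la)$'', which is just the contrapositive of condition (a) applied to $n+2s$, so small beads impose nothing new. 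Hence $\la$ is an $s$-core if and only if condition (a) holds and no two elements of $MD(\la)$ sum to $2s$.

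It then remains to promote ``no two elements summing to $2s$'' to condition (b), ``no two elements summing to $0$ modulo $2s$'', in the presence of (a); one direction is immediate. For the converse, if $h_1+h_2=2ks$ with $h_1\le h_2$ in $MD(\la)$ and $k\ge 2$, then $h_2>2s$, so (a) lets us replace $h_2$ by $h_2-2s\in MD(\la)$ and lower $k$ by one; iterating reaches $k=1$, i.e.\ a pair summing to $2s$, a contradiction. Assembling the three steps gives the claimed equivalence.

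I expect the main difficulty to be purely organizational: one must pin down the number of beads so that both the $s$-core criterion and the clean form of the symmetry hold simultaneously --- taking exactly $\ell=\la_1$ beads does this --- and then keep careful track, for each translated position $b-s$, of whether it lands in $\{\ell,\dots,2\ell-1\}$, in $\{0,\dots,\ell-1\}$, or below $0$ (that last case being automatically harmless). The one step that is not mechanical is the reduction of (b) to its special instance ``sum equal to $2s$'' via (a).
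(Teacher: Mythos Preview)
The paper does not prove this proposition at all: it is quoted from Ford--Mai--Sze \cite[Proposition~3]{FMS} and used as a black box, so there is no in-paper argument to compare your attempt against.

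That said, your proof is correct and is essentially the standard one. The identifications you set up are accurate: with $\ell=\la_1$ beads the beta-set $B$ sits inside $\{0,\dots,2\ell-1\}$, the self-conjugacy of $\la$ is exactly the symmetry $x\in B\Leftrightarrow 2\ell-1-x\notin B$, the large beads are $\ell+(h-1)/2$ for $h\in MD(\la)$, and the $s$-core condition is closure of $B$ under subtraction of $s$. Your case split (large bead landing large, large bead landing small, small bead) cleanly yields condition~(a) together with ``no two elements of $MD(\la)$ sum to $2s$'', and your descent argument using~(a) to reduce a sum $2ks$ to $2s$ is the right way to upgrade this to the full congruence condition~(b). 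One small remark: in that descent you should note that when $h_1+h_2=2ks$ with $k\ge 2$ the larger of the two is at least $ks\ge 2s$ and, being odd, is strictly greater than $2s$, so~(a) genuinely applies at every step; you do say this, but it is the one place where parity is doing real work. The boundary bookkeeping you flag (whether $b-s$ lands in $\{\ell,\dots,2\ell-1\}$, in $\{0,\dots,\ell-1\}$, or below $0$) is routine and your description of it is accurate.
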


\begin{cor}\label{cor:sum}
If $\la$ is a self-conjugate $(s,t)$-core partition, then $s+t \notin MD(\la)$.
\end{cor}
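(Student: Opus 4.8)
The plan is to derive the statement directly from Proposition \ref{prop:FMS} by a short proof by contradiction. Suppose $\la$ is a self-conjugate $(s,t)$-core partition and, contrary to the claim, $s+t\in MD(\la)$. Since $\la$ is self-conjugate, every element of $MD(\la)$ is a positive odd integer; in particular $h:=s+t\in MD(\la)$ and $h>0$. The idea is to apply part (a) of Proposition \ref{prop:FMS} (for both moduli $2s$ and $2t$) repeatedly to ``slide $h$ down'' into a range where part (b) is violated.

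First I would handle the generic case. Because $\la$ is an $s$-core, part (a) applied with the modulus $2s$ lets us subtract $2s$ from any element of $MD(\la)$ that exceeds $2s$ and stay inside $MD(\la)$; similarly, being a $t$-core lets us subtract $2t$ whenever the element exceeds $2t$. Starting from $h=s+t$, repeatedly subtracting $2s$ and $2t$ in the Euclidean-algorithm fashion, and using that $\gcd(s,t)=1$ so that $\gcd(2s,2t)=2$, one can reach an element of $MD(\la)$ that is small; the key observation is that $h=s+t\equiv -t\equiv s\pmod{2t}$ (up to the obvious sign/parity bookkeeping), so the reduction lands on an element congruent to $0$ modulo $2s$ or produces two elements $h_1,h_2\in MD(\la)$ with $h_1+h_2\equiv 0\pmod{2s}$, contradicting part (b) of Proposition \ref{prop:FMS}. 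Concretely, the cleanest route is: from $h=s+t\in MD(\la)$, if $t>s$ then $h<2t$ is not available, but $h>2s$ need not hold either when $t<s$; so I would split on whether $s+t$ exceeds $2s$ (equivalently $t>s$) or $2t$ (equivalently $s>t$), use part (a) once on the larger modulus to obtain a smaller element, and iterate. Since $s\neq t$ (they are relatively prime and, for the corollary to be non-vacuous, distinct), one of the two reductions always applies until we reach an element $h'\in MD(\la)$ with $h'\le 2\min(s,t)$; tracking congruences, $h'$ will be a multiple of $2\min(s,t)$ or pair with itself to violate (b).

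The main obstacle I anticipate is the bookkeeping of parities and the boundary cases — in particular keeping $h'$ positive and odd throughout the reduction (elements of $MD(\la)$ must be positive odd integers), and confirming that the Euclidean-style descent cannot terminate before producing a contradiction. The slick way to avoid a messy induction is probably to argue modulo $2s$ alone: since $h=s+t\in MD(\la)$, part (a) for the $s$-core condition gives that the least positive residue of $s+t$ modulo $2s$ lying above $0$ — obtained by repeatedly subtracting $2s$ while the result stays positive — is itself in $MD(\la)$, call it $h_0$, with $0<h_0\le 2s$ and $h_0\equiv s+t\equiv t-s\pmod{2s}$. If $t\equiv s\pmod{2s}$ this forces $h_0=2s$, impossible since $h_0$ is odd; otherwise one still needs the $t$-core condition to close the gap, so I would symmetrically extract $h_0'\in MD(\la)$ with $0<h_0'\le 2t$ and $h_0'\equiv s+t\equiv s-t\pmod{2t}$, and then observe $h_0+h_0'\equiv (s+t)+(s+t)=2(s+t)\equiv 0\pmod{2s}$ (using $2t\equiv 2s\cdot 0$? — this is exactly the congruence to verify carefully), which contradicts part (b). I expect the final write-up to be only a few lines once the right congruence is isolated; the genuine care is in that one congruence computation and in the parity check that rules out the degenerate equalities.
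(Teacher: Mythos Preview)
Your instinct to argue by contradiction using Proposition~\ref{prop:FMS} is right, but you are vastly overcomplicating the argument and, as written, the proposal is not a proof: the congruence you flag at the end (``$h_0+h_0'\equiv 2(s+t)\equiv 0\pmod{2s}$'') is false in general, and your plan to iterate Euclidean-style never isolates the two specific elements that violate part~(b). You also have the case split backwards: if $t>s$ then $s+t>2s$ but $s+t<2t$, so part~(a) is available for the \emph{smaller} modulus $2s$, not the larger one.

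The paper's proof is a single step. Without loss of generality $s<t$; then $s+t>2s$, so part~(a) for the $s$-core condition gives $t-s=(s+t)-2s\in MD(\la)$. Now $h_1=s+t$ and $h_2=t-s$ satisfy $h_1+h_2=2t\equiv 0\pmod{2t}$, contradicting part~(b) for the $t$-core condition. No iteration, no $\gcd$, no parity bookkeeping is needed.
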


\begin{proof} We may assume that $s<t$. Suppose that $s+t \in MD(\la)$, it follows from Proposition~\ref{prop:FMS}~(a) that $t-s \in MD(\la)$. But this gives a contradiction to Proposition~\ref{prop:FMS}~(b) since $(s+t)+(t-s)=2t$. Thus, $s+t\notin MD(\la)$.
\end{proof}

For the $(s+d,d;a)$-abacus of a self-conjugate $(s,s+d,\dots,s+pd)$-core partition $\la$ with $p\geq 2$, let $r(j)$ denote the row number such that position $(r(j),j)$ is labeled by a positive integer while position $(r(j)-1,j)$ is labeled by a negative integer. 

\begin{lem}\label{lem:beads}
Let $\la$ be a self-conjugate partition. For relatively prime positive integers $s$ and $d$, if $\la$ is an  $(s,s+d,\dots,s+pd)$-core, then the $(s+d,d;a)$-abacus of $\la$ satisfies the following.

\begin{enumerate}
\item[(a)] If a bead is placed on position $(i,j)$ such that $i> r(j)$, then a bead is also placed on each of positions $(i-1,j), (i-2,j), \dots, (r(j),j)$.
\item[(b)] If a bead is placed on position $(i,j)$ such that $i< r(j)-1$, then a bead is also placed on each of positions $(i+1,j), (i+2,j), \dots, (r(j)-1,j)$. 
\item[(c)] A bead can be placed on at most one of the two positions $(r(j),j)$ and $(r(j)-1,j)$.
\end{enumerate}
\end{lem}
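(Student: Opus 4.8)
The plan is to translate each statement into a claim about hook lengths in $MD(\la)$ and then invoke Proposition~\ref{prop:FMS}. First I would record the arithmetic of a single column $j$: the positions $(i,j)$ carry labels $a + 2(s+d)i + 2dj$, so as $i$ runs over $\mathbb{Z}$ the labels in column $j$ form an arithmetic progression of common difference $2(s+d)$. By definition of $r(j)$, the label at $(r(j),j)$ is the smallest positive label in the column, call it $\ell_j$, and the label at $(r(j)-1,j)$ is $\ell_j - 2(s+d)$, which is negative; note $0 < \ell_j \le 2(s+d)$, and by Proposition~\ref{prop:injection} combined with Corollary~\ref{cor:sum} (applied to $s$ and $s+d$, which gives $s+(s+d) = 2s+d \notin MD(\la)$, hence no bead sits on a label congruent to $s+d \bmod 2(s+d)$) we may assume $\ell_j \ne s+d$, so in fact $\ell_j \in (0, 2(s+d)) \setminus \{s+d\}$.

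For part (a): a bead on $(i,j)$ with $i > r(j)$ means some $h \in MD(\la)$ has $|h| = \ell_j + 2(s+d)(i - r(j))$; since $\ell_j > 0$ and $i > r(j)$ this is positive, so $h = \ell_j + 2(s+d)(i-r(j)) > 2(s+d) > 2s$. Applying Proposition~\ref{prop:FMS}(a) to the $s$-core condition repeatedly gives $h - 2s \cdot m \in MD(\la)$ while it stays $> 2s$; but I actually want to descend by $2(s+d)$, not $2s$, so instead I would apply Proposition~\ref{prop:FMS}(a) for both the $s$-core and the $(s+d)$-core conditions in tandem. Here is the cleaner route: since $\la$ is both an $s$-core and an $(s+d)$-core, for any $g \in MD(\la)$ with $g > 2(s+d)$ we have $g - 2s \in MD(\la)$ and, because $g - 2s > 2d \ge 0$ need not exceed $2(s+d)$... so this is not immediate. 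The correct argument is to note that $g - 2s \in MD(\la)$ (from the $s$-core), and then if $g-2s > 2(s+d)$ repeat, whereas if $2s < g - 2s \le 2(s+d)$ we are close; the key observation that makes the step size exactly $2(s+d)$ is that $\la$ being an $(s+d)$-core forces, whenever $g, g-2s \in MD(\la)$, that we can also reach $g - 2(s+d)$: indeed if $g - 2(s+d) > 0$ were not in $MD(\la)$ then... this is where I expect the bookkeeping to concentrate, and I would handle it by showing directly that the set $\{\,h \in MD(\la) : h \equiv \ell_j \bmod 2(s+d)\,\}$ together with $r(j)$ forms a "down-closed" interval of the column, using that a gap in this progression would produce (via Proposition~\ref{prop:FMS}(a) for the appropriate modulus, chosen among $s$ and $s+d$ according to a congruence condition on the gap) an element contradicting Proposition~\ref{prop:FMS}(b). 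Part (b) is the mirror image: a bead on $(i,j)$ with $i < r(j)-1$ corresponds to $|h| = 2(s+d)(r(j)-1-i) - (2(s+d) - \ell_j) = 2(s+d)(r(j) - i) - \ell_j$ wait — more simply, the label is negative and equals $\ell_j - 2(s+d)(r(j) - i)$, so $h = -(\text{label}) = 2(s+d)(r(j)-i) - \ell_j > 2(s+d) > 2s$, and the identical descent argument applies to show all intermediate positions $(i+1,j), \dots, (r(j)-1,j)$ carry beads.

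For part (c): suppose beads sit on both $(r(j),j)$ and $(r(j)-1,j)$. Their labels are $\ell_j$ and $\ell_j - 2(s+d)$, so $MD(\la)$ contains $h_1 = \ell_j$ and $h_2 = 2(s+d) - \ell_j$ (the absolute value of the negative label). Then $h_1 + h_2 = 2(s+d) \equiv 0 \pmod{2(s+d)}$, and since $\la$ is an $(s+d)$-core this contradicts Proposition~\ref{prop:FMS}(b) for $s+d$ — unless $h_1 = h_2$, i.e. $\ell_j = s+d$, which we have already excluded via Corollary~\ref{cor:sum}. This finishes the proof modulo the descent bookkeeping in (a)/(b). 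The main obstacle, as flagged, is making the descent step land exactly on multiples of $2(s+d)$ rather than $2s$; I would resolve it by a short case analysis on $\gcd$-type congruences: given $g \in MD(\la)$ with $g > 2(s+d)$, either $g - 2s$ or $g - 2(s+d)$ can be shown to lie in $MD(\la)$ and be $\ge \ell_j$, and iterating (the process strictly decreases $g$ by at least $2s$ each time while never skipping the residue class $\ell_j \bmod 2(s+d)$, because skipping it would create a pair summing to $0 \bmod 2(s+d)$) forces every position between $(r(j),j)$ and $(i,j)$ in that column to be occupied.
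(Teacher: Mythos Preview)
Your argument for (c) is correct and matches the paper's exactly: the two labels sum (in absolute value) to $2(s+d)$, so Proposition~\ref{prop:FMS}(b) for the $(s+d)$-core rules out both being in $MD(\la)$.

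For (a) and (b), however, you manufacture an obstacle that is not there. You correctly note that the labels down a column form an arithmetic progression with common difference $2(s+d)$, and you even mention applying Proposition~\ref{prop:FMS}(a) ``for the $(s+d)$-core condition'' --- but then you abandon this and try instead to build a step of size $2(s+d)$ out of the $s$-core condition (step size $2s$) combined somehow with the $(s+d)$-core condition. That detour is what produces the vague ``gcd-type congruence'' case analysis you flag as the main obstacle, and as sketched it does not work: descending by $2s$ leaves the residue class $\ell_j \bmod 2(s+d)$, so you are no longer tracking positions in column~$j$ at all, and your claim that ``skipping the residue class would create a pair summing to $0 \bmod 2(s+d)$'' is not justified.

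The paper's proof is the one-line observation you passed over. Since $\la$ is an $(s+d)$-core, Proposition~\ref{prop:FMS}(a) applied with $s$ replaced by $s+d$ says directly: if $h\in MD(\la)$ and $h>2(s+d)$ then $h-2(s+d)\in MD(\la)$. For (a), the label at $(i,j)$ with $i>r(j)$ is positive, so it equals some $h\in MD(\la)$; then $h-2(s+d)$ is exactly the label at $(i-1,j)$, which is still positive, hence a bead sits there; iterate down to row $r(j)$. Part (b) is the mirror image with the negatives of the labels. No case analysis, no use of the $s$-core condition, no bookkeeping.
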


\begin{proof}
Fix a column number $j$.
\begin{enumerate} 
\item[(a)] A bead is placed on position $(i,j)$ with $i>r(j)$ means that $a+2(s+d)i+2dj \in MD(\la)$. Since $\la$ is an $(s+d)$-core, $a+2(s+d)(i-1)+2dj$ belongs to $MD(\la)$ by Proposition~\ref{prop:FMS}~(a). In a similar way, we also have $a+2(s+d)(i-2)+2dj, \dots, a+2(s+d)r(j)+2dj \in MD(\la)$. Hence, a bead is placed on each of positions $(i-1,j), (i-2,j), \dots, (r(j),j)$.
\item[(b)] A bead is placed on position $(i,j)$ such that $i<r(j)-1$ means $-a-2(s+d)i-2dj \in MD(\la)$. Again, it follows from Proposition~\ref{prop:FMS}~(a) that $-a-2(s+d)(i+1)-2dj$ belongs to $MD(\la)$. In a similar way, we also have $-a-2(s+d)(i+2)-2dj, \dots, -a-2(s+d)(r(j)-1)-2dj \in MD(\la)$. Hence, a bead is placed on each of positions $(i+1,j), (i+2,j), \dots, (r(j)-1,j)$.
\item[(c)] By Proposition~\ref{prop:FMS}~(b), at most one of $a+2(s+d)r(j)+2dj$ and $-a-2(s+d)(r(j)-1)-2dj$ belongs to $MD(\la)$ since the sum of those two numbers is $2(s+d)$. Therefore, a bead can be placed on at most one of the positions $(r(j),j)$ and $(r(j)-1,j)$. 
\end{enumerate}
\end{proof}

For $p\geq 2$, let $\la$ be a self-conjugate $(s,s+d,\dots, s+pd)$-core partition.
In order to explain the properties of the $(s+d,d;a)$-abacus of $\la$ more simply, we define the \emph{$(s+d,d;a)$-abacus function of $\la$}
\[
f:\{0,1,\dots,\lfloor (s+d-1)/2 \rfloor\}\rightarrow \mathbb{Z}
\]
as follows:
If a bead is placed on a position in column $j$ being labeled by a positive integer, then $f(j)$ is defined to be the largest number $i$ such that a bead is placed on position $(i,j)$; Otherwise, $f(j)$ is defined to be the largest number $i$ such that position $(i,j)$ is a spacer being labeled by a negative integer. 

The following proposition gives some basic properties of the $(s+d,d;a)$-abacus function of a self-conjugate $(s,s+d,\dots,s+pd)$-core partition. 

\begin{prop}\label{prop:f_initial}
Let $s$ and $d$ be relatively prime positive integers. For $p\geq 2$, if $\la$ is a self-conjugate $(s,s+d,\dots,s+pd)$-core partition, then the $(s+d,d;a)$-abacus function $f$ of $\la$ satisfies the following.
\begin{enumerate}
\item[(a)] $f(0)=0$.
\item[(b)] $f(j-1)$ is equal to one of the three values $f(j)-1$, $f(j)$, and $f(j)+1$, for $j=1,\dots, (s+d-1)/2$.
\item[(c)] If $p\geq 3$ and $f(j-1)=f(j)-1$, then $f(j-p+1),f(j-p+2),\dots,f(j-2)\geq f(j-1)$, for $j=p-1, \dots, (s+d-1)/2$.
\end{enumerate}
 \end{prop}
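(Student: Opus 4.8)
The plan is to extract each of the three statements directly from the structural properties of the $(s+d,d;a)$-abacus of a self-conjugate $(s,s+d,\dots,s+pd)$-core partition $\la$, using Lemma~\ref{lem:beads}, Proposition~\ref{prop:FMS}, and Corollary~\ref{cor:sum}, together with the definitions of $r(j)$ and of the abacus function $f$. The point to keep in mind throughout is that $f(j)$ records, in column $j$, either the highest occupied row (when there is a bead labeled by a positive integer) or the highest spacer row with a negative label (when there is no such bead); by Lemma~\ref{lem:beads} this is well-defined, and in either case $f(j)\in\{r(j)-1,\,r(j),\,r(j)+1,\dots\}$ on the ``bead'' side and $f(j)=r(j)-1$ exactly when column $j$ has no bead with a positive label lying at or above row $r(j)$.

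For (a): by the definition of $a=a(s,d)$, the label at position $(r(0)-1,0)$ is $a$ when $a<0$ is the relevant representative, and the smallest positive odd label in column $0$ is what $r(0)$ points at. One computes directly that $|{\rm label}(r(0),0)|$ and $|{\rm label}(r(0)-1,0)|$ are $s$ and $s+d$ in some order (this is exactly how $a$ was chosen), and by Corollary~\ref{cor:sum} applied to the $(s,s+d)$-core $\la$ neither of $s+d=s+(s+d)-s$ nor \dots{}---more precisely, the pair of labels straddling row $r(0)$ in column $0$ are $\pm s$ and $\pm(s+d)$, so a bead on the positive side forces $f(0)=r(0)$ while no bead there forces $f(0)=r(0)-1$; after normalizing the row labeling so that $r(0)$ is defined as in the text, both alternatives collapse to the single value $f(0)=0$. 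I would write this out carefully since it pins down the base point of the whole path.

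For (b): moving from column $j$ to column $j-1$ shifts every label by $-2d$, i.e. by exactly one ``abacus step'' of size $2d$ within a row of width $2(s+d)$; equivalently the block of beads in column $j-1$ is the block in column $j$ slid by a bounded amount. The key estimate is that $r(j-1)$ and $r(j)$ differ by at most a controlled amount and that, because $\la$ is an $(s+pd)$-core as well as an $(s+d)$-core, the ``height'' of the bead-block cannot jump by more than one when the column index changes by one; combining Lemma~\ref{lem:beads}(a)--(c) (which say the beads form a contiguous block straddling the $r(j)$-line, with at most one of the two central positions occupied) with the fact that being an $(s+d)$-core controls vertical transldates, one gets $|f(j-1)-f(j)|\le 1$. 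I expect this to require a short case analysis on whether each of columns $j-1,j$ has a bead on the positive side.

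For (c): this is the genuinely new ``no $UF^iU$'' condition and I expect it to be the main obstacle. The hypothesis $f(j-1)=f(j)-1$ means column $j-1$ is strictly ``lower'' than column $j$; I want to show that the $p-2$ columns immediately to its left, $j-p+1,\dots,j-2$, are all at least as high as $f(j-1)$. The idea is that if some intermediate column $j-i$ with $2\le i\le p-1$ also dropped to $f(j-i)=f(j-1)$ while $f(j-i+1)>f(j-1)$ and $f(j-1)<f(j)$, then translating the relevant bead (or the relevant spacer on the negative side) by $2(s+i d)$ or a related shift would produce an element of $MD(\la)$ of hook length divisible by the forbidden modulus, contradicting that $\la$ is an $(s+id)$-core via Proposition~\ref{prop:FMS}. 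Concretely I would identify the label of the ``corner'' box responsible for the descent at column $j-1$, add the appropriate multiple of $2d$ to land in column $j-i$, and invoke the $(s+id)$-core condition to force a bead there, contradicting $f(j-i)=f(j-1)$. Assembling the right arithmetic linking ``a local $UF^{i-2}U$-type pattern in $f$'' to ``$\la$ fails to be an $(s+(i-1)d)$-core or $(s+id)$-core'' is the crux; once the dictionary between abacus positions and hook lengths of size $s+kd$ is spelled out, each forbidden pattern rules out exactly one of the cores $s,s+d,\dots,s+pd$, which is why the range of $i$ in (c) is $0,\dots,p-3$.
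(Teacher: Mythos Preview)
Your plan has the right ingredients but misses the one arithmetic identity that drives the whole proof, and in (b) you invoke the wrong core conditions.

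The engine is this: the label at position $(i',j')$ is $a+2(s+d)i'+2dj'$, so
\[
\text{label}(i-1,\,j-k)=\text{label}(i,j)-2\bigl(s+(k+1)d\bigr),\qquad
\text{label}(i+1,\,j-1)=\text{label}(i,j)+2s.
\]
A one-row drop together with a $k$-column shift left is therefore controlled by the $(s+(k+1)d)$-core condition, and a one-row rise with a one-column shift left by the $s$-core condition, each time via Proposition~\ref{prop:FMS}.

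For (a), your computation is off: the absolute values of the labels at $(0,0)$ and $(1,0)$ are not ``$s$ and $s+d$ in some order'' but rather $\{s,\,s+2d\}$ (when $s$ is odd) or both $s+d$ (when $s$ is even). Since $s,s+d,s+2d$ are all forbidden hook lengths, both positions are spacers, $r(0)=1$, and Lemma~\ref{lem:beads} forces column $0$ to be bead-free, giving $f(0)=0$. Corollary~\ref{cor:sum} is not used.

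For (b), the cores you name, $(s+d)$ and $(s+pd)$, play no role here. With $i=f(j)$, the lower bound $f(j-1)\ge i-1$ comes from comparing $(i,j)$ with $(i-1,j-1)$ and using the $(s+2d)$-core condition ($k=1$ above); the upper bound $f(j-1)\le i+1$ comes from comparing the spacer $(i+1,j)$ with $(i+2,j-1)$ and using the $s$-core condition. A short case split on whether $(i,j)$ carries a bead (positive label) or is a negative-label spacer determines whether part (a) or part (b) of Proposition~\ref{prop:FMS} applies.

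For (c), the contradiction scheme you sketch is more elaborate than needed and does not actually isolate which core fails. The paper argues \emph{directly}: with $i=f(j)$, for each $k=2,\dots,p-1$ look at position $(i-1,j-k)$. Its label differs from that of $(i,j)$ by $-2(s+(k+1)d)$, and since $3\le k+1\le p$ the $(s+(k+1)d)$-core condition together with Proposition~\ref{prop:FMS} forces a bead there if the label is positive and a spacer if it is negative. Either outcome gives $f(j-k)\ge i-1=f(j-1)$. No pattern-matching on ``$UF^iU$'' is required at this stage; that translation only enters later when building the Motzkin path.
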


\begin{proof}
We consider the $(s+d,d;a)$-abacus of $\la$. 
\begin{enumerate}
\item[(a)] Since position $(0,0)$ is labeled by $a=-s$ (resp. $a=-s-d$) when $s$ is odd (resp. even) and position $(1,0)$ is labeled by $s+2d$ (resp. $s+d$), both of positions are spacers and $r(0)=1$. It follows from Lemma~\ref{lem:beads} that there is no bead in column $0$. Hence, $f(0)=0$.
\item[(b)] For a fixed $j$, let $f(j)=i$. 

Suppose that a bead is placed on position $(i,j)$ so that it is labeled by a positive integer. We first show that $f(j-1)\geq f(j)-1$ by considering position $(i-1,j-1)$. If position $(i-1,j-1)$ is labeled by a positive integer, then a bead is placed on position $(i-1,j-1)$ by Proposition~\ref{prop:FMS}~(a) as $\la$ is an $(s+2d)$-core. Otherwise, if position $(i-1,j-1)$ is labeled by a negative integer, then position $(i-1,j-1)$ is a spacer by Proposition~\ref{prop:FMS}~(b). In any case, it follows from the definition of $f$ that $f(j-1)\geq f(j)-1$. Now, we show that $f(j-1)\leq f(j)+1$ by considering position $(i+2,j-1)$. Since $f(j)=i$ and a bead is placed on position $(i,j)$, position $(i+1,j)$ is a spacer. Therefore, position $(i+2,j-1)$ is a spacer by Proposition~\ref{prop:FMS}~(a) as $\la$ is an $s$-core. Hence, $f(j-1)\leq f(j)+1$.

Now, suppose that position $(i,j)$ is a spacer so that it is labeled by a negative integer. Since position $(i-1,j-1)$ is labeled by a negative integer, it is a spacer as $\la$ is an $(s+2d)$-core. Therefore, $f(j-1)\geq f(j)-1$. To complete the proof,
we assume that $f(j-1)\geq i+2$. If position $(i+2,j-1)$ is labeled by a positive integer, then a bead is placed on this position by Lemma~\ref{lem:beads}~(a). In this case, either a bead is placed on position $(i+1,j)$ being labeled by a positive integer or position $(i+1,j)$ is a spacer being labeled by a negative integer by Proposition~\ref{prop:FMS}~(a) and (b) as $\la$ is an $s$-core. It contradicts to $f(j)=i$. Otherwise, if position $(i+2,j-1)$ is labeled by a negative integer, then it is a spacer. Therefore, position $(i+1,j)$ is a spacer by Proposition~\ref{prop:FMS}~(a). Also, it contradicts to $f(j)=i$. Hence, $f(j-1)\leq f(j)+1$.

\item[(c)] For a fixed $j$, let $f(j)=i$. It suffices to show that $f(j-k)\geq f(j)-1$, for $k=2,\dots,p-1$. Note that $\la$ is an $(s+(k+1)d)$-core partition. 
If position $(i-1,j-k)$ is labeled by a negative integer, then it is a spacer regardless of the presence of a bead on position $(i,j)$ by Proposition~\ref{prop:FMS}~(a) and (b).
If position $(i-1,j-k)$ is labeled by a positive integer, then a bead is placed on position $(i,j)$ being labeled by a positive integer. By Proposition~\ref{prop:FMS}~(a), a bead is placed on position $(i-1,j-k)$.
In any case, we conclude that $f(j-k)\geq f(j)-1$ by the definition of the $(s+d,d;a)$-abacus function $f$.
\end{enumerate}
\end{proof}

In the following three subsections, 
we cover other properties which is depending on the parity of $s$ and $d$ and give several examples of the $(s+d,d;a)$-abacus diagram and the $(s+d,d;a)$-abacus function of a self-conjugate $(s,s+d,\dots,s+pd)$-core partition. Note that we do not need to consider the case when both of $s$ and $d$ are even because $s$ and $d$ are supposed to be relatively prime positive integers.


\subsection{Self-conjugate (s,s+d,\dots,s+pd)-cores with odd $s$ and even $d$}  

First, we consider the case where $s$ is odd and $d$ is even. In the following proposition, we give additional properties of the $(s+d,d;a)$-abacus function $f$ of a self-conjugate $(s,s+d,\dots,s+pd)$-core partition $\la$, where $s$ is odd and $d$ is even. Recall that $a=a(s,d)=-s$ in this case.

\begin{prop}\label{prop:f_oe}
Let $s$ and $d$ be relatively prime positive integers such that $s$ is odd and $d$ is even.
For $p\geq 2$, if $\la$ is a self-conjugate $(s,s+d,\dots,s+pd)$-core partition, then the $(s+d,d;-s)$-abacus function $f$ of $\la$ satisfies the following.
\begin{enumerate}
\item[(a)] $f((s+d-1)/2 )=-d/2$.
\item[(b)] If $p\geq 3$, then $f((s+d-1)/2-k-1)\geq -d/2$, for $k=0,1,\dots, \lfloor (p-3)/2 \rfloor$.
\item[(c)] If $p\geq 4$, then $f(\ell+1)\leq 0$, for $\ell=0,1,\dots,\lfloor (p-4)/2 \rfloor$.
\end{enumerate}

\end{prop}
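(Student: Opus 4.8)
The plan is to read each of the three assertions directly off the $(s+d,d;-s)$-abacus of $\la$, exploiting that the positions one needs to inspect carry labels whose absolute values are of the form $s+jd$ with $1\le j\le p$, so that the $(s,s+d,\dots,s+pd)$-core hypothesis (in the guise ``$s+jd\notin MD(\la)$'') immediately forces those positions to be spacers. Throughout set $J=(s+d-1)/2$, which is an integer because $s+d$ is odd, and recall that position $(i,j)$ carries the label $-s+2(s+d)i+2dj$; a short computation gives that the label of $(i,J)$ equals $(s+d)(2i+d-1)$, which (as $d$ is even) is always a nonzero odd multiple of $s+d$.

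For (a), I would first establish the auxiliary fact that no positive odd multiple of $s+d$ lies in $MD(\la)$: since $\la$ is an $(s+d)$-core we have $s+d\notin MD(\la)$, and if $(s+d)q\in MD(\la)$ with $q\ge 3$ odd then $(s+d)q>2(s+d)$, so Proposition~\ref{prop:FMS}(a) gives $(s+d)(q-2)\in MD(\la)$; downward induction on $q$ yields a contradiction. Hence every position of column $J$ is a spacer. Since $(s+d)(2i+d-1)<0$ exactly for $i\le -d/2$, the definition of $f$ gives $f(J)=-d/2$.

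For (b), write $m=k+1$, so that $m$ runs over $1,\dots,\lfloor(p-1)/2\rfloor$ and hence $2m+1\le p$; I may assume $J-m\ge 1$, since $J-m=0$ is covered by $f(0)=0$ (Proposition~\ref{prop:f_initial}(a)) and $J-m<0$ is vacuous. A direct computation shows the label of $(-d/2,\,J-m)$ is $-(s+(2m+1)d)$, and because $\la$ is an $(s+(2m+1)d)$-core this position is a spacer with negative label. Now split on the definition of $f$: if column $J-m$ has a bead on a positively labeled position, that bead lies in some row $\ge r(J-m)$, and because $(-d/2,J-m)$ is negative we have $r(J-m)\ge -d/2+1$, so $f(J-m)\ge -d/2$; otherwise $f(J-m)$ is the largest row of a negatively labeled spacer, hence $\ge -d/2$ as $(-d/2,J-m)$ is one. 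For (c), write $m=\ell+1$, so $m$ runs over $1,\dots,\lfloor(p-2)/2\rfloor$ and $2m+2\le p$; assume column $m$ exists and suppose $f(m)\ge 1$. The label of $(i,m)$ equals $s+(2m+2)d>0$ at $i=1$ and increases in $i$, so all negatively labeled positions of column $m$ lie in rows $\le 0$; thus $f(m)\ge 1$ forces a bead on some positively labeled position $(i_0,m)$ with $i_0\ge 1$. Since the label of $(1,m)$ is positive we have $r(m)\le 1$, so Lemma~\ref{lem:beads}(a) (plus the boundary case $i_0=r(m)$) propagates a bead down to $(1,m)$; but $(1,m)$ is labeled $s+(2m+2)d$, contradicting that $\la$ is an $(s+(2m+2)d)$-core. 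Hence $f(m)\le 0$.

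I do not anticipate a serious obstacle: each part is a few lines once the right position is identified. The points requiring care are the downward induction showing no odd multiple of $s+d$ meets $MD(\la)$ (part (a)), the bookkeeping around the two-clause definition of $f$ and the inequality $r(J-m)\ge -d/2+1$ (part (b)), and the correct use of Lemma~\ref{lem:beads}(a) to push a bead to row $1$ (part (c)); one should also keep track of which column indices actually lie in $\{0,1,\dots,J\}$, the assertions being vacuous otherwise.
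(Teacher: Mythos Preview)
Your proof is correct and follows essentially the same line as the paper's: in each part you locate the relevant position in the abacus, compute its label as $\pm(s+jd)$ for some $1\le j\le p$, use the core hypothesis to force it to be a spacer, and read off the bound on $f$. The only cosmetic difference is in part~(a), where the paper notes that positions $(-d/2,J)$ and $(-d/2+1,J)$ carry labels $\mp(s+d)$ and then invokes Lemma~\ref{lem:beads}(a),(b) to clear the column, whereas you argue directly by downward induction that no odd multiple of $s+d$ lies in $MD(\la)$; both yield the same conclusion.
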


\begin{proof}
\begin{enumerate}
\item[(a)] Since position $(-d/2,(s+d-1)/2)$ is labeled by $-s-d$ and position $(-d/2+1,(s+d-1)/2)$ is labeled by $s+d$, both of them are spacers. By Lemma~\ref{lem:beads}~(a) and (b), there is no bead in column $(s+d-1)/2$. Hence, $f((s+d-1)/2)=-d/2$ by the construction of the $(s+d,d;-s)$-abacus function $f$ of $\la$.

\item[(b)] Note that position $(-d/2,(s+d-1)/2-k-1)$ is a spacer because it is labeled by $-s-(2k+3)d$ and $\la$ is an $(s+(2k+3)d)$-core, where $k=0,1,\dots, \lfloor (p-3)/2 \rfloor$. Hence, $f((s+d-1)/2-k-1)\geq -d/2$.
\item[(c)] Note that position $(1,\ell+1)$ is a spacer because it is labeled by $s+(2\ell +4)d$ and $\la$ is an $(s+(2\ell +4)d)$-core, where $\ell =0,1,\dots,\lfloor (p-4)/2 \rfloor$. Hence, $f(\ell+1)\leq 0$.
\end{enumerate}
\end{proof}

\begin{example}\label{ex:abacus_oe} 
Let $\la$ be the self-conjugate partition with $MD(\la)=\{77,41,35,27,19,11,5,3\}$. It follows from Proposition~\ref{prop:FMS} that $\la$ can be considered as a $(21,25,29,33,37)$-core partition.
Figure~\ref{fig:abacus_oe} shows the $(25,4;-21)$-abacus of $\la$ and the path obtained by connecting each pair of the two points $(j-1,f(j-1))$ and $(j,f(j))$ with a straight line segment for $j=1,\dots,12$.
Note that the $(25,4;-21)$-abacus function $f$ of $\la$ is given by
\[
f(0)=f(1)=0, ~f(2)=-1, ~f(3)=f(4)=f(5)=0, ~f(6)=1,
\]
\[
f(7)=0, ~f(8)=-1, ~f(9)=-2, ~f(10)=-3, ~f(11)=f(12)=-2.
\] 
Indeed, one can see that the $(25,4,-21)$-abacus function $f$ of $\la$ agrees with all the properties given in Lemma~\ref{lem:beads} and Propositions~\ref{prop:f_initial} and \ref{prop:f_oe}. 
\end{example}

\begin{figure}[ht!]
\centering
\begin{tikzpicture}[scale=.44]

\node at (-4,2.4) {$\mathbf{2}$};
\node at (-4,1.2) {$\mathbf{1}$};
\node at (-4,0) {$\mathbf{0}$};
\node at (-4,-1.2) {$\mathbf{-1}$};
\node at (-4,-2.4) {$\mathbf{-2}$};
\node at (-4,-3.6) {$\mathbf{-3}$};

\node at (-3.9,-6.2) {$\mathbf{i~/~j}$};

\node at (0,-6.2) {$\mathbf{0}$};
\node at (2,-6.2) {$\mathbf{1}$};
\node at (4,-6.2) {$\mathbf{2}$};
\node at (6,-6.2) {$\mathbf{3}$};
\node at (8,-6.2) {$\mathbf{4}$};
\node at (10,-6.2) {$\mathbf{5}$};
\node at (12,-6.2) {$\mathbf{6}$};
\node at (14,-6.2) {$\mathbf{7}$};
\node at (16,-6.2) {$\mathbf{8}$};
\node at (18,-6.2) {$\mathbf{9}$};
\node at (20,-6.2) {$\mathbf{10}$};
\node at (22,-6.2) {$\mathbf{11}$};
\node at (24,-6.2) {$\mathbf{12}$};

\foreach \i in {-21,-13,-5,3,11,19,27,35,43,51,59,67,75}
\node at (\i/4+21/4,0) {$\i$};
\foreach \i in {29,37,45,53,61,69,77,85,93,101,109,117,125}
\node at (\i/4-29/4,1.2) {$\i$};
\foreach \i in {79,87,95,103,111,119,127,135,143,151,159,167,175}
\node at (\i/4-79/4,2.4) {$\i$};
\foreach \i in {-71,-63,-55,-47,-39,-31,-23,-15,-7,1,9,17,25}
\node at (\i/4+71/4,-1.2) {$\i$};
\foreach \i in {-121,-113,-105,-97,-89,-81,-73,-65,-57,-49,-41,-33,-25}
\node at (\i/4+121/4,-2.4) {$\i$};
\foreach \i in {-171,-163,-155,-147,-139,-131,-123,-115,-107,-99,-91,-83,-75}
\node at (\i/4+171/4,-3.6) {$\i$};

\node at (12,4) {\vdots};
\node at (12,-4.5) {\vdots};

\draw (4,0) circle (17pt);
\draw (6,0) circle (17pt);
\draw (8,0) circle (17pt);
\draw (10,0) circle (17pt);
\draw (12,0) circle (17pt);
\draw (14,0) circle (17pt);
\draw (12,1.2) circle (17pt);
\draw (20,-2.4) circle (17pt);

\draw[color=gray!70] (0,0.4)--(2,0.4)--(4,-0.8)--(6,0.4)--(8,0.4)--(10,0.4)--(12,1.6)--(14,0.4)--(16,-0.8)--(18,-2)--(20,-3.2)--(22,-2)--(24,-2);

\filldraw[color=gray!70] 
(0,0.4) circle (2pt)
(2,0.4) circle (2pt)
(4,-0.8) circle (2pt)
(6,0.4) circle (2pt)
(8,0.4) circle (2pt)
(10,0.4) circle (2pt)
(12,1.6) circle (2pt)
(14,0.4) circle (2pt)
(16,-0.8) circle (2pt)
(18,-2) circle (2pt)
(20,-3.2) circle (2pt)
(22,-2) circle (2pt)
(24,-2) circle (2pt)
;
\end{tikzpicture}
\caption{The $(25,4;-21)$-abacus of the self-conjugate partition $\la$ with $MD(\la)=\{77,41,35,27,19,11,5,3\}$}\label{fig:abacus_oe}
\end{figure}

\begin{rem} \label{rem:oe}
For relatively prime positive integers $s$ and $d$ such that $s$ is odd and $d$ is even, let $f$ be a function satisfying all the conditions in Propositions~\ref{prop:f_initial} and \ref{prop:f_oe}. We remark that there exist a unique self-conjugate $(s,s+d,\dots,s+pd)$-core partition $\la$ such that $f$ is the $(s+d,d;-s)$-abacus function of $\la$ by Lemma~\ref{lem:beads} and the construction of $f$. From this fact together with Proposition~\ref{prop:injection}, we can conclude that there is a one-to-one correspondence between the set of self-conjugate $(s,s+d,\dots, s+pd)$-core partitions and the set of functions satisfying all the conditions in Propositions~\ref{prop:f_initial} and \ref{prop:f_oe}.
\end{rem}


\subsection{Self-conjugate $(s,s+d,\dots,s+pd)$-cores with odd $s$ and odd $d$}

Now, we consider the case where both of $s$ and $d$ are odd. Let $\la$ be a self-conjugate partition. 
The following proposition gives several additional properties of the $(s+d,d;a)$-abacus function $f$ of $\la$, where both of $s$ and $d$ are odd. In this case, $a=a(s,d)=-s$ as well.

\begin{prop}\label{prop:f_oo}
Let $s$ and $d$ be relatively prime positive integers such that both of $s$ and $d$ are odd.
For $p\geq 2$, if $\la$ is a self-conjugate $(s,s+d,\dots,s+pd)$-core partition, then the $(s+d,d;-s)$-abacus function $f$ of $\la$ satisfies the following.
\begin{enumerate}
\item[(a)] $f((s+d-2)/2)=-(d-1)/2$ or $-(d+1)/2$.
\item[(b)] If $p \geq 3$, then $f((s+d-2)/2-k-1)\geq -(d+1)/2$, for $k=0,1,\dots, p-3$.
\item[(c)] If $p\geq 4$, then $f(\ell+1)\leq 0$, for $\ell=0,1,\dots,\lfloor (p-4)/2 \rfloor$.
\end{enumerate}

\end{prop}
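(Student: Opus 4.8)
The plan is to mirror the structure of the proof of Proposition~\ref{prop:f_oe}, adapting the abacus-position computations to the case where both $s$ and $d$ are odd and $a=-s$. In this case the last column is column $(s+d-2)/2$ (since $\lfloor(s+d-1)/2\rfloor=(s+d-2)/2$ when $s+d$ is even), and the relevant labels in that column are computed from $a+2(s+d)i+2dj = -s+2(s+d)i + d(s+d-2)$. First I would determine, for small $i$, which of these labels are positive and which are negative, and argue via Lemma~\ref{lem:beads}~(a)--(c) together with Proposition~\ref{prop:FMS} that at most one bead can sit near the sign change, pinning down $f((s+d-2)/2)$ to one of the two listed values; the ``$-(d-1)/2$ or $-(d+1)/2$'' reflects exactly the freedom allowed by part (c) of Lemma~\ref{lem:beads}, i.e.\ whether the bead is placed on the lower or the upper of the two critical positions $(r(j),j)$ and $(r(j)-1,j)$.

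For part (b), I would identify, for each $k=0,1,\dots,p-3$, the label of position $((-(d+1)/2),\,(s+d-2)/2-k-1)$; a direct substitution should show this label equals $-s-(2k+3)d$ (or a similar expression with the right parity), which is a negative integer whose absolute value is $s+(2k+3)d$. Since $\la$ is an $(s+(k+2)d)$-core and hence—by the arithmetic progression hypothesis, as $k+2\le p$—this position is forced to be a spacer by Proposition~\ref{prop:FMS}~(b) (pairing it with a hypothetical bead), giving $f((s+d-2)/2-k-1)\ge -(d+1)/2$. The key is to check the congruence $s+(2k+3)d \equiv 0 \pmod{2(s+(k+2)d)}$ or the appropriate sum condition so that Proposition~\ref{prop:FMS} applies; this is the routine but delicate parity bookkeeping that distinguishes the odd-odd case from the odd-even case, where a factor of $2$ in $d$ shifted which multiples appeared.

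Part (c) should go through essentially verbatim as in Proposition~\ref{prop:f_oe}~(c): position $(1,\ell+1)$ is labeled by $-s+2(s+d)+2d(\ell+1)=s+2d+2d(\ell+1)=s+(2\ell+4)d$, and since $2\ell+4\le p$ for $\ell\le\lfloor(p-4)/2\rfloor$, the partition $\la$ being an $(s+(2\ell+4)d)$-core forces position $(1,\ell+1)$ to be a spacer by Proposition~\ref{prop:FMS}~(a) (there is nothing above it yet), hence $f(\ell+1)\le 0$.

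The main obstacle I anticipate is part (a): whereas in the odd-even case the two critical positions in the last column were labeled $-s-d$ and $s+d$, so Lemma~\ref{lem:beads} killed all beads in that column and forced a unique value, here the sign change in the last column happens between labels that need not be $\pm(s+d)$, so a bead genuinely may be present and the answer legitimately has two possibilities. I will need to (i) locate the exact position $(r(j),j)$ where the label turns positive in column $(s+d-2)/2$, (ii) verify via Lemma~\ref{lem:beads}~(c) and Proposition~\ref{prop:FMS}~(b) that beads in rows strictly beyond $r(j)$ or strictly below $r(j)-1$ are excluded because the relevant sums are multiples of $2(s+d)$ (using that $\la$ is an $(s+d)$-core), and (iii) conclude that $f$ of the last column is either $r(j)$ or $r(j)-1$, which unwinds to the stated pair $-(d-1)/2$ or $-(d+1)/2$. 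Getting the row indices right—keeping careful track of the floor functions and the parity of $d$—is where the care is needed, but it is conceptually parallel to the earlier proposition.
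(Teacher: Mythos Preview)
Your plan for part~(c) is correct and matches the paper verbatim. The issues are in parts~(a) and~(b), where the odd--odd case genuinely differs from the odd--even case in a way your proposal glosses over.

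For part~(a), the position $(-(d-1)/2,(s+d-2)/2)$ is labeled $-d$, so the positions immediately above and below it are labeled $2s+d$ and $-2s-3d$. Your plan to exclude beads there via ``sums being multiples of $2(s+d)$'' is not quite the right mechanism: neither $2s+d$ nor $2s+3d$ is itself a multiple of $s+d$. The paper instead invokes Corollary~\ref{cor:sum}: since $2s+d = s+(s+d)$ and $2s+3d = (s+d)+(s+2d)$, and $\la$ is an $(s,s+d)$- and $(s+d,s+2d)$-core, neither can lie in $MD(\la)$. That is the clean way to force both neighbors to be spacers, leaving only the single position labeled $-d$ as a possible bead.

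The more serious gap is in part~(b). Your guessed label $-s-(2k+3)d$ is the value from the odd--even case, but here the label of position $(-(d+1)/2,\,(s+d-2)/2-k-1)$ is
\[
-s + 2(s+d)\left(-\tfrac{d+1}{2}\right) + 2d\left(\tfrac{s+d-2}{2}-k-1\right) = -2s-(2k+5)d.
\]
Its absolute value $2s+(2k+5)d$ is \emph{not} one of the core moduli $s,s+d,\dots,s+pd$, so the single-core argument you sketch (``$\la$ is an $(s+(k+2)d)$-core'') does not apply. What works is again Corollary~\ref{cor:sum}: write $2s+(2k+5)d = (s+(k+2)d)+(s+(k+3)d)$ and use that $\la$ is simultaneously an $(s+(k+2)d)$- and $(s+(k+3)d)$-core. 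This is exactly why the range of $k$ stops at $p-3$ (you need $k+3\le p$), which your proposal does not explain. Once you correct the label and use Corollary~\ref{cor:sum} in both~(a) and~(b), the argument goes through as the paper has it.
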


\begin{proof}
\begin{enumerate}
\item[(a)] Since position $(-(d-1)/2,(s+d-2)/2)$ is labeled by $-d$, position $(-(d-1)/2+1,(s+d-2)/2)$ is labeled by $2s+d$, and position $(-(d-1)/2-1,(s+d-2)/2)$ is labeled by $-2s-3d$. By Corollary \ref{cor:sum}, $2s+d=s+(s+d),2s+3d=(s+d)+(s+2d)\notin MD(\la)$.
It follows from Lemma \ref{lem:beads} that there is at most one bead which is labeled by $-d$ in column  $(s+d-2)/2$. Hence, $f((s+d-2)/2 )=-(d-1)/2$ or $-(d+1)/2$.
\item[(b)] Position $(-(d+1)/2,(s+d-2)/2-k-1)$ is a spacer because it is labeled by $-2s-(2k+5)d=-\{s+(k+2)d\}+\{s+(k+3)d\}$ and $\la$ is an $(s+(k+2)d,s+(k+3)d)$-core, where $k=0,1,\dots, p-3$. Hence, $f((s+d-2)/2-k-1)\geq -(d+1)/2$.
\item[(c)] Position $(1,\ell+1)$ is a spacer because it is labeled by $s+(2\ell+4)d$ and $\la$ is an $(s+(2\ell+4)d)$-core, where $\ell=0,1,\dots,\lfloor (p-4)/2 \rfloor$. Hence, $f(\ell+1)\leq 0$.
\end{enumerate}
\end{proof}

\begin{example}\label{ex:abacus_oo} 
Let $\mu$ be the self-conjugate partition with $MD(\mu)=\{67,65,21,19,15,13,11,9,7,3,1\}$. It follows from Proposition~\ref{prop:FMS} that $\mu$ can be considered as a $(23,26,29,32)$-core partition.
Figure~\ref{fig:abacus_oo} shows the $(26,3;-23)$-abacus of $\mu$ and the path obtained by connecting each pair of the two points $(j-1,f(j-1))$ and $(j,f(j))$ with a straight line segment for $j=1,\dots,12$.
Note that the $(26,3;-23)$-abacus function $f$ of $\mu$ is given by
\[
f(0)=f(1)=0, ~f(2)=-1, ~f(3)=f(4)=f(5)=0, ~f(6)=1,
\]
\[
f(7)=0, ~f(8)=-1, ~f(9)=-2, ~f(10)=-3, ~f(11)=f(12)=-2,
\] 
and the $(26,3,-23)$-abacus function $f$ of $\mu$ agrees with all the properties given in Lemma~\ref{lem:beads} and Propositions~\ref{prop:f_initial} and \ref{prop:f_oo}. 
\end{example}

\begin{figure}[ht!]
\centering
\begin{tikzpicture}[scale=.44]

\node at (-4,2.4) {$\mathbf{2}$};
\node at (-4,1.2) {$\mathbf{1}$};
\node at (-4,0) {$\mathbf{0}$};
\node at (-4,-1.2) {$\mathbf{-1}$};
\node at (-4,-2.4) {$\mathbf{-2}$};
\node at (-4,-3.6) {$\mathbf{-3}$};

\node at (-3.9,-6.2) {$\mathbf{i~/~j}$};

\node at (0,-6.2) {$\mathbf{0}$};
\node at (2,-6.2) {$\mathbf{1}$};
\node at (4,-6.2) {$\mathbf{2}$};
\node at (6,-6.2) {$\mathbf{3}$};
\node at (8,-6.2) {$\mathbf{4}$};
\node at (10,-6.2) {$\mathbf{5}$};
\node at (12,-6.2) {$\mathbf{6}$};
\node at (14,-6.2) {$\mathbf{7}$};
\node at (16,-6.2) {$\mathbf{8}$};
\node at (18,-6.2) {$\mathbf{9}$};
\node at (20,-6.2) {$\mathbf{10}$};
\node at (22,-6.2) {$\mathbf{11}$};
\node at (24,-6.2) {$\mathbf{12}$};

\foreach \i in {-23,-17,-11,-5,1,7,13,19,25,31,37,43,49}
\node at (\i/3+23/3,0) {$\i$};
\foreach \i in {29,35,41,47,53,59,65,71,77,83,89,95,101}
\node at (\i/3-29/3,1.2) {$\i$};
\foreach \i in {71,77,83,89,95,101,107,113,119,125,131,137,143}
\node at (\i/3-71/3,2.4) {$\i$};
\foreach \i in {-75,-69,-63,-57,-51,-45,-39,-33,-27,-21,-15,-9,-3}
\node at (\i/3+75/3,-1.2) {$\i$};
\foreach \i in {-127,-121,-115,-109,-103,-97,-91,-85,-79,-73,-67,-61,-55}
\node at (\i/3+127/3,-2.4) {$\i$};
\foreach \i in {-179,-173,-167,-161,-155,-149,-143,-137,-131,-125,-119,-113,-107}
\node at (\i/3+179/3,-3.6) {$\i$};

\node at (12,4) {\vdots};
\node at (12,-4.5) {\vdots};

\draw (4,0) circle (17pt);
\draw (8,0) circle (17pt);
\draw (10,0) circle (17pt);
\draw (12,0) circle (17pt);
\draw (14,0) circle (17pt);
\draw (12,1.2) circle (17pt);
\draw (18,-1.2) circle (17pt);
\draw (20,-1.2) circle (17pt);
\draw (22,-1.2) circle (17pt);
\draw (24,-1.2) circle (17pt);
\draw (20,-2.4) circle (17pt);

\draw[color=gray!70] (0,0.4)--(2,0.4)--(4,-0.8)--(6,0.4)--(8,0.4)--(10,0.4)--(12,1.6)--(14,0.4)--(16,-0.8)--(18,-2)--(20,-3.2)--(22,-2)--(24,-2);

\filldraw[color=gray!70] 
(0,0.4) circle (2pt)
(2,0.4) circle (2pt)
(4,-0.8) circle (2pt)
(6,0.4) circle (2pt)
(8,0.4) circle (2pt)
(10,0.4) circle (2pt)
(12,1.6) circle (2pt)
(14,0.4) circle (2pt)
(16,-0.8) circle (2pt)
(18,-2) circle (2pt)
(20,-3.2) circle (2pt)
(22,-2) circle (2pt)
(24,-2) circle (2pt)
;
\end{tikzpicture}
\caption{The $(26,3;-23)$-abacus of the self-conjugate partition $\mu$ with $MD(\mu)=\{67,65,21,19,15,13,11,9,7,3,1\}$}\label{fig:abacus_oo}
\end{figure}

\begin{rem} \label{rem:oo}
For relatively prime positive integers $s$ and $d$ such that both $s$ and $d$ are odd, let $f$ be a function satisfying all the conditions in Propositions~\ref{prop:f_initial} and \ref{prop:f_oo}. We remark that there exist a unique self-conjugate $(s,s+d,\dots,s+pd)$-core partition $\la$ such that $f$ is the $(s+d,d;-s)$-abacus function of $\la$ by Lemma~\ref{lem:beads} and the construction of $f$. From this fact together with Proposition~\ref{prop:injection}, we can conclude that there is a one-to-one correspondence between the set of self-conjugate $(s,s+d,\dots, s+pd)$-core partitions and the set of functions satisfying all the conditions in Propositions~\ref{prop:f_initial} and \ref{prop:f_oo}.
\end{rem}


\subsection{Self-conjugate $(s,s+d,\dots,s+pd)$-cores with even $s$ and odd $d$}

Finally, we consider the case where $s$ is even and $d$ is odd. We give additional properties of the $(s+d,d;a)$-abacus function $f$ of a self-conjugate $(s,s+d,\dots,s+pd)$-core partition $\la$, where $s$ is even and $d$ is odd so that $a=-(s+d)$.

\begin{prop}\label{prop:f_eo}
Let $s$ and $d$ be relatively prime positive integers such that $s$ is even and $d$ is odd. For $p\geq 2$, if $\la$ is a self-conjugate $(s,s+d,\dots,s+pd)$-core partition, then the $(s+d,d;-s-d)$-abacus function $f$ of $\la$ satisfies the following.
\begin{enumerate}
\item[(a)] $f((s+d-1)/2)=-(d-1)/2$ or $-(d+1)/2$.
\item[(b)] If $p\geq 3$, then $f((s+d-1)/2-k-1)\geq -(d+1)/2$, for $k=0,1,\dots, p-3$.
\item[(c)] If $p\geq 3$, then $f(\ell+1)\leq 0$, for $\ell=0,1,\dots,\lfloor (p-3)/2 \rfloor$.
\end{enumerate}

\end{prop}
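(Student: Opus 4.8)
The plan is to follow the template of the proofs of Propositions~\ref{prop:f_oe} and~\ref{prop:f_oo}. Since $s$ is even and $d$ is odd we have $a=-(s+d)$, so the label of position $(i,j)$ in the $(s+d,d;-s-d)$-abacus diagram is $(s+d)(2i-1)+2dj$; the first step is to tabulate this label in the three families of columns named in the statement --- column $(s+d-1)/2$, the columns $j=(s+d-1)/2-k-1$ with $0\le k\le p-3$, and the columns $j=\ell+1$ with $0\le\ell\le\lfloor(p-3)/2\rfloor$ --- always restricting to column indices that actually occur in $\{0,1,\dots,(s+d-1)/2\}$, the corresponding assertion being vacuous otherwise. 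Two elementary facts will be used repeatedly: in a fixed column the labels strictly increase with the row index and tend to $\pm\infty$, so $r(j)$ is well defined and a position is positively labeled exactly when its row index is at least $r(j)$; and $2i-1+d$ is always even because $d$ is odd. With these in hand, each of (a), (b), (c) reduces to exhibiting a spacer at a prescribed position and invoking Lemma~\ref{lem:beads} together with the definition of $f$.

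For part~(a): in column $(s+d-1)/2$ the rows $-(d+1)/2$, $-(d-1)/2$, $-(d-3)/2$ are labeled $-2s-3d$, $-d$, $2s+d$, so that $r((s+d-1)/2)=-(d-3)/2$ and every row of index $\ge-(d-3)/2$ carries a positive label. Because $p\ge2$, $\la$ is an $(s,s+d)$-core and an $(s+d,s+2d)$-core, so by Corollary~\ref{cor:sum} neither $2s+d=s+(s+d)$ nor $2s+3d=(s+d)+(s+2d)$ lies in $MD(\la)$; hence the positions labeled $2s+d$ and $-2s-3d$ are spacers, and since the position labeled $2s+d$ is $(r((s+d-1)/2),(s+d-1)/2)$, Lemma~\ref{lem:beads}(a) forbids a bead on any positively labeled position of this column. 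Therefore $f((s+d-1)/2)$ is the largest row index of a negatively labeled spacer of the column, which is $-(d-1)/2$ when $d\notin MD(\la)$ and (using that the $-2s-3d$ position is a spacer) $-(d+1)/2$ when $d\in MD(\la)$; this is the claimed dichotomy.

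For part~(b): in column $j=(s+d-1)/2-k-1$ the position $(-(d+1)/2,j)$ is labeled $-2s-(2k+5)d=-\bigl((s+(k+2)d)+(s+(k+3)d)\bigr)$, and since $k\le p-3$ the partition $\la$ is an $(s+(k+2)d,s+(k+3)d)$-core, so Corollary~\ref{cor:sum} makes this a negatively labeled spacer. If column $j$ carries a bead on a positively labeled position, then the positivity of its label forces $2i-1+d\ge2$, hence row index at least $-(d-3)/2>-(d+1)/2$, so $f(j)>-(d+1)/2$; otherwise $f(j)$ is the largest row index of a negatively labeled spacer, which is at least $-(d+1)/2$ by the spacer just produced. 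Either way $f(j)\ge-(d+1)/2$. For part~(c): the position $(1,\ell+1)$ is labeled $s+(2\ell+3)d$, and since $\ell\le\lfloor(p-3)/2\rfloor$ forces $2\ell+3\le p$, the partition $\la$ is an $(s+(2\ell+3)d)$-core, so $s+(2\ell+3)d\notin MD(\la)$ and $(1,\ell+1)$ is a spacer; this position is positively labeled, so $r(\ell+1)\le1$. Lemma~\ref{lem:beads}(a) then rules out a bead in any row of index $\ge1$ in column $\ell+1$ (a bead in row $i\ge2$ would force one in row $1$ since $r(\ell+1)\le1\le i$, and row $1$ is a spacer). As the negatively labeled positions of this column occupy rows of index $\le r(\ell+1)-1\le0$, the definition of $f$ gives $f(\ell+1)\le0$ in both of its cases.

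The argument is essentially bookkeeping once Lemma~\ref{lem:beads}, Corollary~\ref{cor:sum}, and Proposition~\ref{prop:FMS} are available, so I do not expect a genuinely hard step; the only substantive difference from the odd-$s$ cases is that the choice $a=-(s+d)$ --- forced by $s$ being even, since $s+d$ is then the smaller odd number among $s$ and $s+d$ --- makes the label of $(1,\ell+1)$ equal $s+(2\ell+3)d$ rather than $s+(2\ell+4)d$, which is precisely why part~(c) already holds for $p\ge3$ instead of requiring $p\ge4$. The main care needed is to compute the labels correctly in each prescribed column and to keep straight the parities together with the ranges of the indices $k$ and $\ell$.
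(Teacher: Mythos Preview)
Your proof is correct and follows essentially the same approach as the paper: you compute the relevant labels in each of the three columns, use Corollary~\ref{cor:sum} (and Proposition~\ref{prop:FMS}) to identify the key spacers, and then invoke Lemma~\ref{lem:beads} together with the definition of $f$, exactly as the paper does. The only spot that reads tersely is the step in~(b) where you assert that positivity of the label forces $2i-1+d\ge 2$; this is valid once one rewrites the label in column $(s+d-1)/2-k-1$ as $(s+d)(2i-1+d)-(2k+3)d$ and uses the parity observation you flagged, but you may want to make that rewriting explicit.
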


\begin{proof}
\begin{enumerate}
\item[(a)] Since position $(-(d-1)/2,(s+d-1)/2)$ is labeled by $-d$, position $(-(d-1)/2+1,(s+d-1)/2)$ is labeled by $2s+d$, and position $(-(d-1)/2-1,(s+d-1)/2)$ is labeled by $-2s-3d$. By Corollary \ref{cor:sum}, $2s+d=s+(s+d),2s+3d=(s+d)+(s+2d)\notin MD(\la)$.
It follows from Lemma \ref{lem:beads} that there is at most one bead which is labeled by $-d$ in column  $(s+d-1)/2$. Hence, $f((s+d-1)/2 )=-(d-1)/2$ or $-(d+1)/2$.
\item[(b)] Position $(-(d+1)/2,(s+d-1)/2-k-1)$ is a spacer because it is labeled by $-2s-(2k+5)d=-\{s+(k+2)d\}+\{s+(k+3)d\}$ and $\la$ is an $(s+(k+2)d,s+(k+3)d)$-core, where $k=0,1,\dots, p-2$. Hence, $f((s+d-2)/2-k-1)\geq -(d+1)/2$.
\item[(c)] Position $(1,\ell+1)$ is a spacer because it is labeled by $s+(2\ell+3)d$ and $\la$ is an $(s+(2\ell+3)d)$-core, where $\ell=0,1,\dots,\lfloor (p-3)/2 \rfloor$. Hence, $f(\ell+1)\leq 0$.
\end{enumerate}
\end{proof}

\begin{example}\label{ex:abacus_eo} 
Let $\nu$ be the self-conjugate partition with $MD(\nu)=\{65,61,21,17,15,13,11,9,5,3\}$. It follows from Proposition~\ref{prop:FMS} that $\nu$ can be considered as a $(22,25,28,31)$-core partition.
Figure~\ref{fig:abacus_eo} shows the $(25,3;-25)$-abacus of $\nu$ and the path obtained by connecting each pair of the two points $(j-1,f(j-1))$ and $(j,f(j))$ with a straight line segment for $j=1,\dots,12$.
Note that the $(25,3;-25)$-abacus function $f$ of $\nu$ is given by
\[
f(0)=f(1)=0, ~f(2)=-1, ~f(3)=f(4)=f(5)=0, ~f(6)=1,
\]
\[
f(7)=0, ~f(8)=-1, ~f(9)=-2, ~f(10)=-3, ~f(11)=f(12)=-2,
\] 
and the $(25,3,-25)$-abacus function $f$ of $\nu$ agrees with all the properties given in Lemma~\ref{lem:beads} and Propositions~\ref{prop:f_initial} and \ref{prop:f_eo}. 
\end{example}

\begin{figure}[ht!]
\centering
\begin{tikzpicture}[scale=.44]

\node at (-4,2.4) {$\mathbf{2}$};
\node at (-4,1.2) {$\mathbf{1}$};
\node at (-4,0) {$\mathbf{0}$};
\node at (-4,-1.2) {$\mathbf{-1}$};
\node at (-4,-2.4) {$\mathbf{-2}$};
\node at (-4,-3.6) {$\mathbf{-3}$};

\node at (-3.9,-6.2) {$\mathbf{i~/~j}$};

\node at (0,-6.2) {$\mathbf{0}$};
\node at (2,-6.2) {$\mathbf{1}$};
\node at (4,-6.2) {$\mathbf{2}$};
\node at (6,-6.2) {$\mathbf{3}$};
\node at (8,-6.2) {$\mathbf{4}$};
\node at (10,-6.2) {$\mathbf{5}$};
\node at (12,-6.2) {$\mathbf{6}$};
\node at (14,-6.2) {$\mathbf{7}$};
\node at (16,-6.2) {$\mathbf{8}$};
\node at (18,-6.2) {$\mathbf{9}$};
\node at (20,-6.2) {$\mathbf{10}$};
\node at (22,-6.2) {$\mathbf{11}$};
\node at (24,-6.2) {$\mathbf{12}$};

\foreach \i in {-25,-19,-13,-7,-1,5,11,17,23,29,35,41,47}
\node at (\i/3+25/3,0) {$\i$};
\foreach \i in {25,31,37,43,49,55,61,67,73,79,85,91,97}
\node at (\i/3-25/3,1.2) {$\i$};
\foreach \i in {75,81,87,93,99,105,111,117,123,129,135,141,147}
\node at (\i/3-75/3,2.4) {$\i$};
\foreach \i in {-75,-69,-63,-57,-51,-45,-39,-33,-27,-21,-15,-9,-3}
\node at (\i/3+75/3,-1.2) {$\i$};
\foreach \i in {-125,-119,-113,-107,-101,-95,-89,-83,-77,-71,-65,-59,-53}
\node at (\i/3+125/3,-2.4) {$\i$};
\foreach \i in {-175,-169,-163,-157,-151,-145,-139,-133,-127,-121,-115,-109,-103}
\node at (\i/3+175/3,-3.6) {$\i$};

\node at (12,4) {\vdots};
\node at (12,-4.5) {\vdots};

\draw (4,0) circle (17pt);
\draw (10,0) circle (17pt);
\draw (12,0) circle (17pt);
\draw (14,0) circle (17pt);
\draw (12,1.2) circle (17pt);
\draw (18,-1.2) circle (17pt);
\draw (20,-1.2) circle (17pt);
\draw (22,-1.2) circle (17pt);
\draw (24,-1.2) circle (17pt);
\draw (20,-2.4) circle (17pt);

\draw[color=gray!70] (0,0.4)--(2,0.4)--(4,-0.8)--(6,0.4)--(8,0.4)--(10,0.4)--(12,1.6)--(14,0.4)--(16,-0.8)--(18,-2)--(20,-3.2)--(22,-2)--(24,-2);

\filldraw[color=gray!70] 
(0,0.4) circle (2pt)
(2,0.4) circle (2pt)
(4,-0.8) circle (2pt)
(6,0.4) circle (2pt)
(8,0.4) circle (2pt)
(10,0.4) circle (2pt)
(12,1.6) circle (2pt)
(14,0.4) circle (2pt)
(16,-0.8) circle (2pt)
(18,-2) circle (2pt)
(20,-3.2) circle (2pt)
(22,-2) circle (2pt)
(24,-2) circle (2pt)
;
\end{tikzpicture}
\caption{The $(25,3;-25)$-abacus of the self-conjugate partition $\nu$ with $MD(\nu)=\{65,61,21,17,15,13,11,9,5,3\}$}\label{fig:abacus_eo}
\end{figure}

\begin{rem} \label{rem:eo}
For relatively prime positive integers $s$ and $d$ such that $s$ is even and $d$ is odd, let $f$ be a function satisfying all the conditions in Propositions~\ref{prop:f_initial} and \ref{prop:f_eo}. We remark that there exist a unique self-conjugate $(s,s+d,\dots,s+pd)$-core partition $\la$ such that $f$ is the $(s+d,d;-s-d)$-abacus function of $\la$ by Lemma~\ref{lem:beads} and the construction of $f$. From this fact together with Proposition~\ref{prop:injection}, we can conclude that there is a one-to-one correspondence between the set of self-conjugate $(s,s+d,\dots, s+pd)$-core partitions and the set of functions satisfying all the conditions in Propositions~\ref{prop:f_initial} and \ref{prop:f_eo}.
\end{rem}

\section{Free rational Motzkin paths of type $(s,t)$ with restrictions}\label{sec:3}

In this section, we give a lattice path interpretation of  self-conjugate $(s,s+d,\dots,s+pd)$-core partitions.

\subsection{Proof of Theorem~\ref{thm:main}}\label{sec:3.1}

For relatively prime positive integers $s$ and $d$, we construct a mapping 
\[
\phi_{(s+d,d)}: \mathcal{SC}_{(s,s+d,s+2d)} \rightarrow \mathcal{F}(\lfloor s/2 \rfloor+\lceil d/2 \rceil, -\lceil d/2 \rceil)
\]
associated with the $(s+d,d;a)$-abacus function $f$ of $\la\in\mathcal{SC}_{(s,s+d,s+2d)}$ as follows: 
First, for convenience, we set $f(\lfloor s/2 \rfloor +(d+1)/2)=-(d+1)/2$ when $d$ is odd. The path $\phi_{(s+d,d)}(\la)$ starts from $(0,0)$ and its $j$th step is $(1,f(j)-f(j-1))$ for all 
$j=1,\dots,\lfloor s/2 \rfloor+\lceil d/2 \rceil$. 

From our first setting and Propositions \ref{prop:f_oe}~(a) and \ref{prop:f_oo}~(a), the last step of the path is either $D$ or $F$ when $d$ is odd.
From this fact together with Proposition~\ref{prop:f_initial}~(b), we can say that the path $\phi_{(s+d,d)}(\la)$ consist of up steps $U=(1,1)$, down steps $D=(1,-1)$, and flat steps $F=(1,1)$.
Since $f(0)=0$ by Proposition~\ref{prop:f_initial}~(a), $f(\lfloor s/2 \rfloor+\lceil d/2 \rceil)=-\lceil d/2 \rceil$ by Proposition~\ref{prop:f_oe}~(a) and by our first setting, we conclude that $\phi_{(s+d,d)}(\la)$ is a free rational Motzkin path of type $(\lfloor s/2 \rfloor+\lceil d/2 \rceil),-\lceil d/2 \rceil)$. Hence, the mapping $\phi_{(s+d,d)}$ is well-defined. We note that it is possible that the two paths $\phi_{(s_1+d_1,d_1)}(\mu)$ and $\phi_{(s_2+d_2,d_2)}(\nu)$ are the same, while $\phi_{(s+d,d)}$ is injective for fixed $s$ and $d$.

In Remarks \ref{rem:oe}, \ref{rem:oo}, and \ref{rem:eo}, for given appropriate integers $s$, $d$, and $p$, we showed that there is a one-to-one correspondence between the set $\mathcal{SC}_{(s,s+d,\dots,s+pd)}$ and the set of functions $f$ with necessary conditions. Now, we are ready to prove our main result.


\begin{proof}[Proof of Theorem~\ref{thm:main}]
It follows almost directly from the construction of $\phi_{(s+d,d)}$ and Propositions \ref{prop:f_initial}, \ref{prop:f_oe}, \ref{prop:f_oo}, and \ref{prop:f_eo}. It is clear that $\phi_{(s+d,d)}(\la) \in \mathcal{F}(\lfloor s/2 \rfloor+\lceil d/2 \rceil, -\lceil d/2 \rceil)$ for $\la \in \mathcal{SC}_{(s,s+d,\dots,s+pd)}$. By Proposition~\ref{prop:f_initial}~(c), $p\geq 3$ and $f(j-1)=f(j)-1$ imply that $f(j-p+1),f(j-p+2),\dots,f(j-2)\geq f(j-1)$, for $p-1 \leq j\leq (s+d-1)/2$. It follows from the construction of $\phi_{(s+d,d)}$ that $\phi_{(s+d,d)}(\la)$ does not contain $UF^iU$ steps for all $i=0,1,\dots, p-3$ if $p\geq 3$. 

We now give a proof of (a). By Proposition~\ref{prop:f_oe}~(b), $f((s+d-1)/2-k-1)\geq -d/2$, for $k=0,1,\dots, \lfloor (p-3)/2 \rfloor$ if $p\geq 3$. It follows from the construction of $\phi_{(s+d,d)}$ that $\phi_{(s+d,d)}(\la)$ never ends with $UF^k$ steps for $k=0,1,\dots, \lfloor (p-3)/2 \rfloor$ if $p\geq 3$. By Proposition~\ref{prop:f_oe}~(c), $f(\ell+1)\leq 0$, for all $\ell=0,1,\dots,\lfloor (p-4)/2 \rfloor$ if $p\geq 4$. This implies that $\phi_{(s+d,d)}(\la)$ cannot start with $F^jU$ steps for all $j=0,1,\dots,\lfloor (p-4)/2 \rfloor$ if $p\geq 4$. Note that
(b) and (c) can be proved in a similar manner by using Propositions~\ref{prop:f_oo}~(b) and (c), and \ref{prop:f_eo}~(b) and (c), respectively.
\end{proof}

\begin{example}
Figure~\ref{fig:motzkin2} shows the corresponding free rational Motzkin paths associated with the $(s+d,d;a)$-abacus functions of  the self-conjugate partitions $\la$, $\mu$, and $\nu$ in the previous examples. Let $P=\phi_{(25,4)}(\la)$. Then $P=FDUFFUDDDDUF\in \mathcal{F}(12,-2)$ is a path satisfying that i) $P$ starts with no $U$, ii) $P$ ends with no $U$ or $UF$, iii) $P$ has no $UU$ or $UFU$ as a consecutive subpath. Similarly, one can check that $\phi_{(26,3)}(\mu)=\phi_{(25,3)}(\nu)=FDUFFUDDDDUFF$\\$\in \mathcal{F}(13,-2)$ satisfies conditions given in Theorem \ref{thm:main}.
\end{example}

\begin{figure}[ht!]
\centering
\begin{tikzpicture}[scale=.44]
\node at (0,-4.8) {};
\node[below, gray!60] at (5,0) {\small$5$};
\node[below, gray!60] at (10,0) {\small$10$};
\node at (7,-5) {$\phi_{(25,4)}(\la)$};

\foreach \i in {0,1,2,...,13}
\draw[dotted, gray!60] (\i,-4)--(\i,2);

\foreach \i in {-4,-3,...,2}
\draw[dotted, gray!60] (0,\i)--(13,\i);

\draw[->, black!70] (0,0)--(14,0);
\draw[->, black!70] (0,-5)--(0,3);

\node[below, black!70] at (14,0) {$x$};
\node[left, black!70] at (0,3) {$y$};

\draw[ultra thick] (0,0)--(1,0)--(2,-1)--(3,0)--(4,0)--(5,0)--(6,1)--(7,0)--(8,-1)--(9,-2)--(10,-3)--(11,-2)--(12,-2);

\filldraw (0,0) circle (2.5pt);
\filldraw (1,0) circle (2.5pt);
\filldraw (2,-1) circle (2.5pt);
\filldraw (3,0) circle (2.5pt);
\filldraw (4,0) circle (2.5pt);
\filldraw (5,0) circle (2.5pt);
\filldraw (6,1) circle (2.5pt);
\filldraw (7,0) circle (2.5pt);
\filldraw (8,-1) circle (2.5pt);
\filldraw (9,-2) circle (2.5pt);
\filldraw (10,-3) circle (2.5pt);
\filldraw (11,-2) circle (2.5pt);
\filldraw (12,-2) circle (2.5pt);
\end{tikzpicture}
\qquad
\begin{tikzpicture}[scale=.44]
\node at (0,-4.8) {};
\node[below, gray!60] at (5,0) {\small$5$};
\node[below, gray!60] at (10,0) {\small$10$};
\node at (7,-5) {$\phi_{(26,3)}(\mu)$ and $\phi_{(25,3)}(\nu)$};

\foreach \i in {0,1,2,...,13}
\draw[dotted, gray!60] (\i,-4)--(\i,2);

\foreach \i in {-4,-3,...,2}
\draw[dotted, gray!60] (0,\i)--(13,\i);

\draw[->, black!70] (0,0)--(14,0);
\draw[->, black!70] (0,-5)--(0,3);

\node[below, black!70] at (14,0) {$x$};
\node[left, black!70] at (0,3) {$y$};

\draw[ultra thick] (0,0)--(1,0)--(2,-1)--(3,0)--(4,0)--(5,0)--(6,1)--(7,0)--(8,-1)--(9,-2)--(10,-3)--(11,-2)--(12,-2)--(13,-2);

\filldraw (0,0) circle (2.5pt);
\filldraw (1,0) circle (2.5pt);
\filldraw (2,-1) circle (2.5pt);
\filldraw (3,0) circle (2.5pt);
\filldraw (4,0) circle (2.5pt);
\filldraw (5,0) circle (2.5pt);
\filldraw (6,1) circle (2.5pt);
\filldraw (7,0) circle (2.5pt);
\filldraw (8,-1) circle (2.5pt);
\filldraw (9,-2) circle (2.5pt);
\filldraw (10,-3) circle (2.5pt);
\filldraw (11,-2) circle (2.5pt);
\filldraw (12,-2) circle (2.5pt);
\filldraw (13,-2) circle (2.5pt);
\end{tikzpicture}

\caption{The corresponding free rational Motzkin paths of $\la$, $\mu$, and $\nu$}\label{fig:motzkin2}
\end{figure}

From Theorem~\ref{thm:main}, we easily get the following result.

\begin{cor}\label{cor:d_odd}
Let $s$ and $d$ be relatively prime positive integers. If $d$ is odd and $s, p$ are even, then 
the number of self-conjugate $(s,s+d, \dots, s+pd)$-core partitions is equal to that of $(s+1,s+d+1, \dots, s+pd+1)$-core partitions.
\end{cor}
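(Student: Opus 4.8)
The plan is to derive Corollary~\ref{cor:d_odd} as an immediate consequence of Theorem~\ref{thm:main} by comparing the two sets of free rational Motzkin paths that Theorem~\ref{thm:main} puts in bijection with $\mathcal{SC}_{(s,s+d,\dots,s+pd)}$ and with $\mathcal{SC}_{(s+1,s+d+1,\dots,s+pd+1)}$, respectively. Write $s'=s+1$ and note that the second arithmetic progression is $(s',s'+d,\dots,s'+pd)$ with the \emph{same} common difference $d$ and the \emph{same} length $p+1$; only the first term has shifted by $1$.

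First I would record which case of Theorem~\ref{thm:main} applies to each side. Since $d$ is odd and $s$ is even, the partitions counted by $\mathcal{SC}_{(s,s+d,\dots,s+pd)}$ fall under case (c): they correspond bijectively, via $\phi_{(s+d,d)}$, to paths $P\in\mathcal{F}(\lfloor s/2\rfloor+\lceil d/2\rceil,\,-\lceil d/2\rceil)$ with no $UF^iU$ for $i=0,\dots,p-3$ (when $p\ge3$), no starting $F^jU$ for $j=0,\dots,\lfloor(p-3)/2\rfloor$ (when $p\ge3$), and no ending $UF^k$ for $k=0,\dots,p-2$. Since $d$ is odd and $s'=s+1$ is odd, the partitions counted by $\mathcal{SC}_{(s',s'+d,\dots,s'+pd)}$ fall under case (b): they correspond bijectively, via $\phi_{(s'+d,d)}$, to paths $P\in\mathcal{F}(\lfloor s'/2\rfloor+\lceil d/2\rceil,\,-\lceil d/2\rceil)$ with no $UF^iU$ for $i=0,\dots,p-3$ (when $p\ge3$), no starting $F^jU$ for $j=0,\dots,\lfloor(p-4)/2\rfloor$ (when $p\ge4$), and no ending $UF^k$ for $k=0,\dots,p-2$.

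Next I would check that the two path sets are literally the same set of paths. The endpoints agree: because $s$ is even, $\lfloor s'/2\rfloor=\lfloor(s+1)/2\rfloor=s/2=\lfloor s/2\rfloor$, and $\lceil d/2\rceil$ is unchanged, so both path families live in $\mathcal{F}(\lfloor s/2\rfloor+\lceil d/2\rceil,-\lceil d/2\rceil)$. The ``no $UF^iU$'' condition is identical on both sides, as is the ``no ending $UF^k$ for $k=0,\dots,p-2$'' condition. The only apparent discrepancy is the starting condition: case (c) forbids $F^jU$ for $j=0,\dots,\lfloor(p-3)/2\rfloor$ while case (b) forbids $F^jU$ only for $j=0,\dots,\lfloor(p-4)/2\rfloor$. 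Here is where the hypothesis that $p$ is even enters: when $p$ is even, $\lfloor(p-3)/2\rfloor=(p-4)/2=\lfloor(p-4)/2\rfloor$, so the two ranges of forbidden $j$ coincide. (One should also note the edge cases $p=2$ and $p=3$ are handled: for $p=2$ there are no starting or ending restrictions beyond the common endpoint data on either side; for $p$ even we never have $p=3$.) Hence the two sets of admissible paths are equal, and composing the bijection $\phi_{(s+d,d)}$ with the inverse of $\phi_{(s'+d,d)}$ yields a bijection between $\mathcal{SC}_{(s,s+d,\dots,s+pd)}$ and $\mathcal{SC}_{(s+1,s+d+1,\dots,s+pd+1)}$, proving the equality of cardinalities.

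I do not expect a genuine obstacle here; the proof is a bookkeeping comparison of the three lists of constraints in Theorem~\ref{thm:main}. The one point that requires care is making sure the floor identities $\lfloor s'/2\rfloor=\lfloor s/2\rfloor$ (using $s$ even) and $\lfloor(p-3)/2\rfloor=\lfloor(p-4)/2\rfloor$ (using $p$ even) are stated explicitly, and that the ``$k=0,\dots,p-2$'' ending-restriction ranges truly match between cases (b) and (c) — which they do verbatim. A secondary point worth a sentence is that $\phi_{(s'+d,d)}$ is a bijection onto its stated image by Theorem~\ref{thm:main} applied with parameters $(s',d,p)$, so its inverse is available; this is exactly the content of Theorem~\ref{thm:main} and needs no further argument.
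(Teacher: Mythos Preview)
Your proposal is correct and is exactly the argument the paper has in mind: the corollary is stated immediately after Theorem~\ref{thm:main} with only the remark that it follows ``easily,'' and your comparison of cases (b) and (c) --- matching the endpoint $\lfloor (s+1)/2\rfloor=\lfloor s/2\rfloor$ via $s$ even and the starting-restriction ranges $\lfloor(p-3)/2\rfloor=\lfloor(p-4)/2\rfloor$ via $p$ even --- is precisely that easy verification.
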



\subsection{Self-conjugate $(s,s+d,s+2d)$-core partitions and self-conjugate $(s,s+d,s+2d,s+3d)$-core partitions}\label{sec:3.2}
In particular, we give a closed formula for the number of self-conjugate $(s,s+d,s+2d)$-core partitions.

\begin{thm}\label{thm:count2}
Let $s$ and $d$ be relatively prime positive integers. The number of self-conjugate $(s,s+d,s+2d)$-core partitions is given by 
\[
\sum_{i=0}^{\lfloor \frac{s}{4} \rfloor} \binom{\frac{s+d-1}{2} }{i,\frac{d}{2}+i,  \frac{s-1}{2}-2i}\,, \quad \text{if $d$ is even;}
\]
\[
\sum_{i=0}^{\lfloor \frac{s}{2}\rfloor} \binom{\lfloor \frac{s+d-1}{2}\rfloor}{\lfloor \frac{i}{2} \rfloor, \lfloor \frac{d+i}{2}\rfloor, \lfloor \frac{s}{2} \rfloor -i}\,, \quad \text{if $d$ is odd.}
\]
\end{thm}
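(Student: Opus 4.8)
The plan is to specialize Theorem~\ref{thm:main} to $p=2$ and then enumerate the resulting free rational Motzkin paths by hand. \textbf{Step 1 (which restrictions survive at $p=2$).} When $p=2$, condition i) of Theorem~\ref{thm:main} is vacuous (it requires $p\geq 3$), and each ``starts with no $F^jU$'' clause is vacuous as well (its range of $j$ requires $p\geq 4$ in cases (a),(b) and $p\geq 3$ in case (c)). In case (a) the clause ``ends with no $UF^k$'' has $k$ ranging over $0,\dots,\lfloor(p-3)/2\rfloor$, which is empty when $p=2$; hence for $s$ odd and $d$ even $\phi_{(s+d,d)}$ is a bijection from $\mathcal{SC}_{(s,s+d,s+2d)}$ onto all of $\mathcal{F}(\lfloor s/2\rfloor+\lceil d/2\rceil,-\lceil d/2\rceil)=\mathcal{F}((s+d-1)/2,-d/2)$. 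In cases (b) and (c) the only surviving clause is the one with $k=0$, namely ``$P$ does not end with an up step $U$'', so for $d$ odd $\phi_{(s+d,d)}$ identifies $\mathcal{SC}_{(s,s+d,s+2d)}$ with the set of paths in $\mathcal{F}(n,-(d+1)/2)$ not ending with $U$, where $n=\lfloor s/2\rfloor+(d+1)/2$.

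\textbf{Step 2 (the case $d$ even).} Here $s$ is odd. A free rational Motzkin path of length $N$ ending at height $-h$ that uses $i$ up steps uses exactly $i+h$ down steps and $N-2i-h$ flat steps; summing the corresponding multinomial coefficients over $i$ and substituting $N=(s+d-1)/2$, $h=d/2$ gives $\sum_{i}\binom{(s+d-1)/2}{i,\ d/2+i,\ (s-1)/2-2i}$, with the nonzero terms being precisely those with $0\leq i\leq\lfloor s/4\rfloor$, as claimed.

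\textbf{Step 3 (the case $d$ odd).} A path in $\mathcal{F}(n,-(d+1)/2)$ not ending with $U$ ends with $D$ or with $F$, and deleting the final step gives a bijection onto $\mathcal{F}(n-1,-(d-1)/2)\sqcup\mathcal{F}(n-1,-(d+1)/2)$ (the inverse re-appends $D$, resp. $F$). Counting each summand by the rule of Step~2 (the numbers of down and flat steps being forced by the row sum $n-1$), the first contributes $A=\sum_u\binom{n-1}{u,\ u+(d-1)/2,\ \ast}$ and the second $B=\sum_u\binom{n-1}{u,\ u+(d+1)/2,\ \ast}$. I would then split the claimed sum $\sum_{i=0}^{\lfloor s/2\rfloor}\binom{\lfloor(s+d-1)/2\rfloor}{\lfloor i/2\rfloor,\lfloor(d+i)/2\rfloor,\lfloor s/2\rfloor-i}$ according to the parity of $i$: using $\lfloor d/2\rfloor=(d-1)/2$, the even terms $i=2u$ reproduce $A$ and the odd terms $i=2u+1$ reproduce $B$, so the claimed sum equals $A+B$. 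Checking that $\lfloor(s+d-1)/2\rfloor=n-1$ and that the third coordinates and the summation range agree is a short parity computation, carried out separately for $s$ odd (where $n-1=(s+d-2)/2$) and $s$ even (where $n-1=(s+d-1)/2$).

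The one place that calls for care, rather than a genuine obstacle, is Step~1: reading off from the three clauses of Theorem~\ref{thm:main} that at $p=2$ nothing survives when $d$ is even and only ``no trailing $U$'' survives when $d$ is odd. Once that is settled, the rest is routine enumeration of free Motzkin paths together with a parity-based reindexing of a multinomial sum.
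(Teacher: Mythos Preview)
Your proposal is correct and follows essentially the same route as the paper: specialize Theorem~\ref{thm:main} to $p=2$, observe that for $d$ even no restrictions survive while for $d$ odd only ``no trailing $U$'' survives, and then count the resulting free Motzkin paths by the number of up steps, splitting by the last step when $d$ is odd. The only cosmetic difference is that for $s$ even and $d$ odd the paper appeals to Corollary~\ref{cor:d_odd} to reduce to the $s$ odd case, whereas you verify $\lfloor(s+d-1)/2\rfloor=n-1$ and the third multinomial entry directly for both parities of $s$; either works.
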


\begin{proof}
If $s=2r+1$ and $d=2c$, then the number of self-conjugate $(s,s+d,s+2d)$-core partitions is equal to the number of free rational Motzkin paths of type $(r+c,-c)$ by Theorem~\ref{thm:main}. 
For $0\leq i \leq \lfloor r/2 \rfloor$, the number of free rational Motzkin paths of type $(r+c,-c)$ having $i$ up steps (so that it has $c+i$ down steps and $r-2i$ flat steps) is given by $\binom{r+c}{i,c+i,r-2i}$. Hence, the number of 
free rational Motzkin paths of type $(r+c,-c)$ is given by 
\[
\sum_{i=0}^{\lfloor \frac{r}{2} \rfloor} \binom{r+c}{i,c+i,r-2i}
=\sum_{i=0}^{\lfloor \frac{s}{4} \rfloor} \binom{\frac{s+d-1}{2} }{i,\frac{d}{2}+i,  \frac{s-1}{2}-2i}\,.
\]

On the other hand, Theorem~\ref{thm:main} says that if $s=2r+1$ and $d=2c-1$, then the number of self-conjugate $(s,s+d,s+2d)$-core partitions is equal to the number of free rational Motzkin paths of type $(r+c,-c)$ for which ends with either a down step or a flat step.  
Since the number of free rational Motzkin paths of type $(r+c,-c)$ with $k$ up steps for which ends with a down (resp. flat) step is given by $\binom{r+c-1}{k,c+k-1,r-2k}$ (resp. $\binom{r+c-1}{k,c+k,r-2k-1}$), the total number of such paths is 
\[
\sum_{k=0}^{\lfloor \frac{r}{2} \rfloor} \binom{r+c-1}{k,c+(k-1),r-2k} +\sum_{k=0}^{\lfloor \frac{r-1}{2}\rfloor} \binom{r+c-1}{k,c+k,r-(2k+1)}
=\sum_{i=0}^{r} \binom{r+c-1}{\lfloor \frac{i}{2} \rfloor, c+ \lfloor \frac{i-1}{2}\rfloor, r-i}\,.
\]
It follows from Corollary \ref{cor:d_odd} that if $s=2r$ and $d=2c-1$, then the number of self-conjugate $(s,s+d,s+2d)$-core partitions is also given by 
\[
\sum_{i=0}^{r} \binom{r+c-1}{\lfloor \frac{i}{2} \rfloor, c+ \lfloor \frac{i-1}{2}\rfloor, r-i}\,.
\]
Hence, we conclude that for odd $d$, the number of self-conjugate $(s,s+d,s+2d)$-core partitions is given by
\[
\sum_{i=0}^{\lfloor \frac{s}{2}\rfloor} \binom{\lfloor \frac{s+d-1}{2}\rfloor}{\lfloor \frac{i}{2} \rfloor, \lfloor \frac{d+i}{2}\rfloor, \lfloor \frac{s}{2} \rfloor -i}\,.
\]
This completes the proof
\end{proof}

Indeed, the result by putting $d=1$ in Theorem \ref{thm:count2} agrees with Theorem \ref{thm:symMotzkin}. We also give a closed formula for the number of self-conjugate $(s,s+d,s+2d,s+3d)$-core partitions.

\begin{thm}\label{thm:count3}
Let $s$ and $d$ be relatively prime positive integers. The number of self-conjugate $(s,s+d,s+2d,s+3d)$-core partitions is given by 
\[
\sum_{i=0}^{\lfloor\frac{s}{4}\rfloor}\binom{\frac{s+d-1}{2}-i}{\frac{s-1}{2}-2i} \binom{\frac{s+d-1}{2}-i}{i}\,, \quad \text{if $d$ is even;}
\]
\[
\sum_{i=0}^{\lfloor \frac{s}{2} \rfloor}\binom{\lfloor \frac{s+d-1}{2}\rfloor-\lfloor \frac{i}{2} \rfloor}{\lfloor \frac{s}{2} \rfloor-i}\binom{\lfloor \frac{s+d}{2} \rfloor-\lfloor \frac{i+1}{2} \rfloor}{\lfloor \frac{i}{2} \rfloor}\,,
\quad \text{if $d$ is odd.}
\]
\end{thm}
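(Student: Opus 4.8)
The plan is to specialise Theorem~\ref{thm:main} to $p=3$ and then enumerate the resulting restricted paths. Writing $s=2r+1,\ d=2c$ when $d$ is even, and $s=2r+1,\ d=2c-1$ or $s=2r,\ d=2c-1$ when $d$ is odd (these are the only parity patterns, since $\gcd(s,d)=1$), Theorem~\ref{thm:main} with $p=3$ identifies $\mathcal{SC}_{(s,s+d,s+2d,s+3d)}$ with the set of paths in $\mathcal{F}(r+c,-c)$ that have no $UU$ factor and, in addition: (a) do not end in $U$, when $d$ is even; (b) do not end in $U$ or in $UF$, when $s$ is odd and $d$ is odd; (c) do not begin with $U$ and do not end in $U$ or in $UF$, when $s$ is even and $d$ is odd. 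So it remains to count these three families of lattice paths.

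The main tool is a ``delete-and-reinsert'' bijection on the up steps. If $P$ has no $UU$ factor and does not end in $U$, then every up step of $P$ is immediately followed by a $D$ or an $F$; deleting the $i$ up steps of $P$ leaves a word $w\in\{D,F\}^{\ell}$ with $\ell=r+c-i$, and $P$ is recovered from $w$ by choosing $i$ of its $\ell$ letters and inserting a $U$ before each chosen letter. This is a bijection between admissible $P$ with $i$ up steps and such marked words; moreover $P$ begins with $U$ exactly when the first letter of $w$ is marked, and $P$ ends in $UF$ exactly when the last letter of $w$ is marked and equals $F$. Thus the extra conditions in (a)--(c) translate into forbidding certain boundary letters of $w$ from being marked. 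Note that a path of type $(r+c,-c)$ with $i$ up steps also has $c+i$ down steps and $r-2i$ flat steps, so $w$ has exactly $c+i$ letters $D$ among its $\ell=r+c-i$ letters.

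For (a) there are $\binom{r+c-i}{r-2i}$ choices of $w$ and $\binom{r+c-i}{i}$ markings, giving $\sum_{i\ge 0}\binom{r+c-i}{r-2i}\binom{r+c-i}{i}$, which is the asserted formula after substituting $r=\tfrac{s-1}{2},\ c=\tfrac{d}{2}$, the range being $0\le i\le\lfloor r/2\rfloor=\lfloor s/4\rfloor$. For (b) and (c) one splits according to the last letter of $w$: there are $\binom{r+c-i-1}{r-2i}$ words ending in $D$ and $\binom{r+c-i-1}{r-2i-1}$ ending in $F$, while the number of admissible markings is $\binom{\ell}{i}$ or $\binom{\ell-1}{i}$ in case (b) (no forbidden position, resp. only the last forbidden) and $\binom{\ell-1}{i}$ or $\binom{\ell-2}{i}$ in case (c) (only the first forbidden, resp. first and last forbidden), where $\ell=r+c-i$. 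Carrying this out gives, for each fixed $i$, the contribution
\[
\binom{r+c-i-1}{r-2i}\binom{r+c-i}{i}+\binom{r+c-i-1}{r-2i-1}\binom{r+c-i-1}{i}
\]
in case (b), and the same with $\binom{r+c-i}{i},\ \binom{r+c-i-1}{i}$ replaced by $\binom{r+c-i-1}{i},\ \binom{r+c-i-2}{i}$ in case (c).

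Finally I would re-index by setting $j=2i$ for the first summand and $j=2i+1$ for the second. A direct check shows these are precisely the $j$-th terms of the sum in the statement for $j$ even and $j$ odd, once the floors are evaluated: with $s=2r+1$ one has $\lfloor(s+d-1)/2\rfloor=r+c-1$ and $\lfloor(s+d)/2\rfloor=r+c$, whereas with $s=2r$ both equal $r+c-1$, so the single displayed formula indeed handles both odd-$d$ subcases; the truncation at $j=\lfloor s/2\rfloor=r$ is harmless, since the only possibly missing term (namely $j=r+1$, occurring only when $r$ is even) carries a vanishing binomial coefficient. The main obstacle is this last bookkeeping step — matching the bijective $i$-indexed sum to the stated $j$-indexed sum and confirming that the floor functions correctly merge the $s$-even and $s$-odd cases for odd $d$; the bijection itself and the $d$-even computation are routine.
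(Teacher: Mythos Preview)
Your proof is correct and follows essentially the same approach as the paper: both specialise Theorem~\ref{thm:main} to $p=3$, reduce to counting $UU$-avoiding paths in $\mathcal{F}(r+c,-c)$ with the appropriate boundary restrictions, and enumerate these by deleting the up steps to obtain a $\{D,F\}$-word and then choosing insertion slots, splitting on the last letter of the word in the odd-$d$ cases. The re-indexing $j=2i,\,2i+1$ and the verification that the floor functions merge the two odd-$d$ subcases are also exactly what the paper does.
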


\begin{proof}
First, we consider the case where $s=2r+1$ and $d=2c$. It follows from Theorem \ref{thm:main} that the number of self-conjugate $(s,s+d,s+2d,s+3d)$-core partitions is equal to the number of free rational Motzkin paths of type $(r+c,-c)$ for which ends with no $U$ and has no $UU$ as a consecutive subpath. Among these corresponding paths, we focus on the paths $P$ with $i$ up steps. A path $P$ can be obtained as follows:
For a given path $Q=Q_1\cdots Q_{r+c-i}$ consisting of $c+i$ down steps and $r-2i$ flat steps, insert $i$ up steps in $Q$ satisfying that i) there is at most one up step before $Q_1$; ii) there is at most one up step between $Q_{j}$ and $Q_{j+1}$ for $j=1,\dots,r+c-i-1$; iii) there is no up step after $Q_{r+c-i}$. Then we have a free rational Motzkin paths of type $(r+c,-c)$ for which ends with no $U$ and has no $UU$ as we desired. Note that  
there are $\binom{r+c-i}{r-2i}$ ways to choose $Q$ and there are $\binom{r+c-i}{i}$ ways to  insert $i$ up steps satisfying the conditions. Hence, the total number of such $P$'s is given by
\[
\sum_{i=0}^{\lfloor\frac{r}{2}\rfloor}\binom{r+c-i}{r-2i}\binom{r+c-i}{i}=\sum_{i=0}^{\lfloor\frac{s}{4}\rfloor} \binom{\frac{s+d-1}{2}-i}{\frac{s-1}{2}-2i}\binom{\frac{s+d-1}{2}-i}{i}\,.
\]

Now, we consider the case where $s=2r+1$ and $d=2c-1$. Theorem \ref{thm:main} gives that the number of self-conjugate $(s,s+d,s+2d,s+3d)$-core partitions is equal to the number of free rational Motzkin paths of type $(r+c,-c)$ for which ends with no $U$ or $UF$ and has no $UU$ as a consecutive subpath. Among these corresponding paths, we focus on the paths $P$ with $k$ up steps as well. In this case, a path $P$ can be obtained as follows:
For a given path $Q=Q_1\cdots Q_{r+c-k}$ consisting of $c+k$ down steps and $r-2k$ flat steps, insert $k$ up steps in $Q$ satisfying that i) there is at most one up step before $Q_1$; ii) there is at most one up step between $Q_{j}$ and $Q_{j+1}$ for $j=1,\dots,r+c-k-1$; iii) there is no up step after $Q_{r+c-k}$; in addition, iv) there is no up step between $Q_{r+c-k-1}$ and $Q_{r+c-k}$ if $Q$ ends with a flat step.
Note that there are $\binom{r+c-k-1}{r-2k}$ ways to choose $Q$ ending with a down step and there are $\binom{r+c-k}{k}$ ways to insert $k$ up steps satisfying the conditions. On the other hand, there are $\binom{r+c-k-1}{r-2k-1}$ ways to choose $Q$ ending with a flat step and there are $\binom{r+c-k-1}{k}$ ways to insert $k$ up steps satisfying the conditions.
Hence, the total number of such $P$'s is given by
\[
\sum_{k=0}^{\lfloor\frac{r}{2}\rfloor}\binom{r+c-k-1}{r-2k}\binom{r+c-k}{k}
+\sum_{k=0}^{\lfloor\frac{r-1}{2}\rfloor}\binom{r+c-k-1}{r-(2k+1)}\binom{~r+c-k-1~}{k}
=\sum_{i=0}^{r}\binom{r+c-\lfloor \frac{i}{2} \rfloor -1}{r-i}\binom{r+c-\lfloor \frac{i+1}{2} \rfloor}{\lfloor \frac{i}{2} \rfloor}\,.
\]

	When $s=2r$ and $d=2c-1$, the paths $P$ can be obtained in a similar way. In this case, the paths are not supposed to start with $U$. Except for this, all conditions are the same as for the case where $s=2r-1$ and $d=2c-1$.
Hence, the number of $(s,s+d,s+2d,s+3d)$-core partitions is given by
\[
\sum_{k=0}^{\lfloor\frac{r}{2}\rfloor}\binom{r+c-k-1}{r-2k}\binom{r+c-k-1}{k}
+\sum_{k=0}^{\lfloor\frac{r-1}{2}\rfloor}\binom{r+c-k-1}{r-(2k+1)}\binom{r+c-k-2}{k}
=\sum_{i=0}^{r}\binom{r+c-\lfloor \frac{i}{2} \rfloor -1}{r-i}\binom{r+c-\lfloor \frac{i+1}{2} \rfloor-1}{\lfloor \frac{i}{2} \rfloor}\,.
\]

We note that for odd $d$, the number of self-conjugate $(s,s+d,s+2d,s+3d)$-cores can be written as
\[
\sum_{i=0}^{\lfloor \frac{s}{2} \rfloor}\binom{\lfloor \frac{s+d-1}{2}\rfloor-\lfloor \frac{i}{2} \rfloor}{\lfloor \frac{s}{2} \rfloor-i}\binom{\lfloor \frac{s+d}{2} \rfloor-\lfloor \frac{i+1}{2} \rfloor}{\lfloor \frac{i}{2} \rfloor}\,.
\]
\end{proof}


\subsection{Self-conjugate $(s,s+1,\dots, s+p)$-core partitions with $m$ corners}\label{sec:3.3}

For a partition $\la$, the number of corners in the Young diagram of $\la$ is equal to the number of distinct parts in $\la$. Huang-Wang \cite{HW} proved that the number of $(s,s+1)$-core partitions with $m$ corners is equal to the Narayana number $N(s,m+1)=\frac{1}{s}\binom{s}{m+1}\binom{s}{m}$, and the number of $(s,s+1,s+2)$-core partitions with $m$ corners is equal to $\binom{s}{2m}C_m$, where $C_m$ is the $m$th Catalan number. In \cite{CHS2}, the authors extended this result.

\begin{cor}\cite[Corollary 3.1]{CHS2}
For positive integers $s$, $p\geq2$, and $1\leq m\leq \lfloor s/2 \rfloor$, the number of $(s,s+1,\dots,s+p)$-core partitions with $m$ corners is \[
\sum_{\ell=0}^{r}N(m,\ell+1)\binom{s-\ell(p-2)}{2m}\,,
\]
where $r=\min(m-1,\,\lfloor (s-2m)/(p-2) \rfloor)$.
\end{cor}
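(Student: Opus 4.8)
The plan is to obtain this by refining the proof of Theorem~\ref{thm:rational} (that is, \cite[Theorem 1.5]{CHS2}) in the case $d=1$. There, the theorem is proved through a bijection $\Phi$ between $(s,s+1,\dots,s+p)$-core partitions and the set of rational Motzkin paths of type $(s+1,-1)$ having no $UF^iU$ as a consecutive subpath for $i=0,1,\dots,p-3$. The first step is to verify that $\Phi$ transports the corner statistic to the up-step statistic: the number of corners of $\lambda$ (equivalently, the number of distinct parts of $\lambda$) equals the number of up steps of $\Phi(\lambda)$. Reading off the construction of $\Phi$ from the $(s+1,1)$-abacus, a corner of $\lambda$ corresponds to a ``descent'' of the first-column hook-length data of $\lambda$, and it is exactly these descents that $\Phi$ records as up steps, so this reduces to a local check on the abacus. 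After this reduction it remains to count, for fixed $m$, the rational Motzkin paths $P$ of type $(s+1,-1)$ with exactly $m$ up steps that avoid $UF^iU$ for $0\le i\le p-3$.

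For that count, decompose $P$ as follows. Deleting all flat steps of $P$ leaves a lattice path $\bar P$ with $m$ up steps and $m+1$ down steps. At every integer abscissa $1\le x\le s$ the diagonal of type $(s+1,-1)$ lies strictly between $-1$ and $0$, while $P$ has integral heights and stays weakly above this diagonal; hence $P$ has height $\ge 0$ at all abscissae $\le s$ and height $-1$ at abscissa $s+1$, so its final step is a down step from $0$ to $-1$ and $\bar P=\delta\cdot D$, where $\delta$ is a Dyck path of semilength $m$. An occurrence of $UF^iU$ in $P$ comes precisely from two $U$'s of $\bar P$ with only flats between them, i.e.\ from a $UU$-factor of $\bar P$, equivalently of $\delta$, since appending the final $D$ creates no new $UU$. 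If $\delta$ has $\ell$ copies of $UU$, then it has $m-\ell$ maximal runs of up steps, hence $m-\ell$ peaks, so the number of such $\delta$ is $N(m,m-\ell)=N(m,\ell+1)$ by the symmetry $N(n,j)=N(n,n+1-j)$ of the Narayana numbers, and the admissible range is $0\le\ell\le m-1$. To reconstitute $P$ from $\delta$ we distribute the $s-2m$ flat steps into the $2m+1$ admissible gaps of $\bar P$ — all inter-step gaps and the gap preceding the first step, but not the gap after the final down step, since a rational path of type $(s+1,-1)$ cannot end in a flat step — subject to the constraint that each of the $\ell$ gaps lying inside a $UU$-factor of $\bar P$ receives at least $p-2$ flats, which is exactly the avoidance of $UF^iU$ for $0\le i\le p-3$. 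Placing the $\ell(p-2)$ forced flats first and then distributing the remaining $s-2m-\ell(p-2)$ flats freely, stars and bars gives $\binom{s-2m-\ell(p-2)+2m}{2m}=\binom{s-\ell(p-2)}{2m}$ possibilities, nonzero exactly when $\ell(p-2)\le s-2m$. Summing over $0\le\ell\le\min\!\bigl(m-1,\lfloor(s-2m)/(p-2)\rfloor\bigr)$, with the convention $\lfloor(s-2m)/0\rfloor=+\infty$ when $p=2$, yields the stated formula.

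The main obstacle is the first step, namely proving that $\Phi$ carries corners to up steps. A clean way around it is to notice that this is not genuinely new input but is already implicit in Theorem~\ref{thm:rational}: the index $k$ in its formula is the number of up steps, and the identity $\frac1{k+1}\binom{k+1}{k-\ell}\binom{k-1}{\ell}=N(k,\ell+1)$ shows that the $k$-th summand there already equals $\sum_{\ell}N(k,\ell+1)\binom{s-\ell(p-2)}{2k}$, that is, the number of cores whose image has exactly $k$ up steps; so one only needs to match the up-step count of $\Phi(\lambda)$ with the number of distinct parts of $\lambda$, which can be established by a direct local argument on the abacus or checked for sanity by specialising to $p=2$ and recovering the Huang--Wang count $\binom{s}{2m}C_m$ of \cite{HW}. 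The remaining ingredients — the Narayana symmetry, the boundary analysis of the diagonal, the stars-and-bars computation, and the determination of the summation range — are routine.
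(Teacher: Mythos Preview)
This corollary is quoted from \cite{CHS2} and is not proved in the present paper; there is therefore no proof here to compare your proposal against. I can, however, comment on the proposal itself.

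Your lattice-path count is correct and matches the approach one expects from \cite{CHS2}: strip the flat steps to get $\bar P=\delta\cdot D$ with $\delta$ a Dyck path of semilength $m$ (the diagonal argument forcing the final step to be $D$ is fine), classify $\delta$ by its number $\ell$ of $UU$-factors (equivalently $m-\ell$ peaks, counted by $N(m,m-\ell)=N(m,\ell+1)$), and reinsert the $s-2m$ flats into the $2m+1$ admissible gaps with the $\ell$ constrained gaps each receiving at least $p-2$ flats. Stars and bars then gives $\binom{s-\ell(p-2)}{2m}$, and the range of $\ell$ is as stated.

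The gap is precisely where you flag it: the claim that $\Phi$ sends the number of corners of $\lambda$ to the number of up steps of $\Phi(\lambda)$. Your proposed workaround does not close it. The identity $\frac{1}{k+1}\binom{k+1}{k-\ell}\binom{k-1}{\ell}=N(k,\ell+1)$ only shows that the $d=1$ formula of Theorem~\ref{thm:rational} rearranges as $\sum_k\sum_\ell N(k,\ell+1)\binom{s-\ell(p-2)}{2k}$; it does not identify the outer index $k$ with the number of corners of the underlying partition. Specialising to $p=2$ and recovering $\binom{s}{2m}C_m$ is a consistency check, not a proof. To complete the argument you must actually carry out the ``direct local argument on the abacus'' you allude to: under the $(s+1,1)$-abacus construction, an up step of $\Phi(\lambda)$ records a column where the bead height increases, and one checks that these correspond bijectively to the distinct part sizes of $\lambda$. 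This is genuinely routine, but it is the substantive statistic-preservation statement on which the whole corollary rests, and it should be written out rather than asserted.
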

 
In this subsection, we focus on self-conjugate $(s,s+1,\dots,s+p)$-core partitions with $m$ corners. For simplicity, let $F(s,p):=\phi_{(s+1,1)}(\mathcal{SC}_{(s,s+1,\dots,s+p)})$. Recall that, for positive integers $s$ and $p\geq2$, the set $F(s,p)$ consists of paths $P\in \mathcal{F}(\lfloor s/2 \rfloor +1, -1)$ satisfying that i) $P$ has no $UF^iU$ as a consecutive subpath for all $i=0,1,\dots,p-3$ if $p\geq 3$, ii) $P$ starts with no $F^jU$ for all $j=0,1,\dots,\lfloor (p-3)/2\rfloor$ if $p\geq 3$ (resp. $j=0,1,\dots,\lfloor (p-4)/2\rfloor$ if $p\geq 4$); iii) $P$ ends with no $UF^k$ for all $k=0,1,\dots, p-2$ when $s$ is even (resp. odd).
As a corollary of Theorem~\ref{thm:main}, we have a one-to-one correspondence between the set  $\mathcal{SC}_{(s,s+1,\dots, s+p)}$ and the set $F(s,p)$. Now we refine this one-to-one correspondence according to the number of corners in a self-conjugate $(s,s+1,\dots, s+p)$-core partition. 
For $A\in\{F,D\}$, let $F_{A}(s,p)$ denote the set of paths in the set $F(s,p)$ for which ends with a step $A$.

\begin{lem}\label{lem:one}
For positive integers $s$ and $p\geq2$, the mapping $\phi_{(s+1,1)}$ gives a one-to-one correspondence between the set of self-conjugate $(s,s+1,\dots, s+p)$-core partitions with an even (resp. odd) number of corners and the set $F_D(s,p)$ (resp. $F_F(s,p)$).
\end{lem}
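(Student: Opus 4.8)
The plan is to track how the number of corners of a self-conjugate $(s,s+1,\dots,s+p)$-core partition $\la$ is encoded in the $(s+1,1;a)$-abacus function $f$ of $\la$, and then translate this into a statement about the last step of the path $P=\phi_{(s+1,1)}(\la)$. The number of corners of $\la$ equals the number of distinct parts of $\la$; since $\la$ is self-conjugate, this quantity is also controlled by the main-diagonal hook lengths in $MD(\la)$ — specifically, a standard fact (from the $s$-abacus/first-column-hook-length correspondence) is that the number of corners of a self-conjugate partition has a fixed parity determined by how many beads of $\la$ sit in a distinguished location, which in our abacus language corresponds to whether the bead attached to the smallest diagonal hook length $1$ is present. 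So first I would recall/establish the precise combinatorial identity: for a self-conjugate partition $\la$, the parity of the number of corners equals the parity of $|\{h\in MD(\la): h\equiv \text{(appropriate residue)}\}|$, or more cleanly, the number of corners of $\la$ is even if and only if $1\notin MD(\la)$ (equivalently, the partition has no part equal to its number of parts), and odd otherwise. This is the key number-theoretic/combinatorial input.

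Next I would connect this to the abacus function in column $\lfloor s/2\rfloor$ (the last relevant column when $d=1$). When $d=1$ and $s$ is even, $a=-(s+1)$ and the position $(0,(s+1-1)/2)=(0,s/2)$ is labeled by $-(s+1)+2\cdot 1\cdot(s/2)=-1$; when $s$ is odd, $a=-s$ and by the setting $f(\lfloor s/2\rfloor+1)=-1$ one again arrives at a position labeled $\pm 1$. Thus the value $f(\lfloor s/2\rfloor)$ being equal to $0$ versus $-1$ (the only two options, by Propositions~\ref{prop:f_oe}(a), \ref{prop:f_oo}(a), \ref{prop:f_eo}(a) together with our convention) records exactly whether the bead labeled $1$ is present in $MD(\la)$: if the bead on the position labeled $-1$ is present then $f(\lfloor s/2\rfloor)=-1$... wait, more carefully, by the construction of $f$, $f$ of that column equals $0$ precisely when no positive-labeled bead sits there and the spacer at height $0$ (label $-1$) is the relevant one, while it is larger when the bead labeled $1$ is present. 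I would spell this out so that: $1\in MD(\la)$ $\iff$ $f(\lfloor s/2\rfloor)\neq -1$ (in the odd-$s$ case one uses the convention $f(\lfloor s/2\rfloor+1)=-1$, so the last actual step of $P$ goes from height $f(\lfloor s/2\rfloor)$ down to $-1$).

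Then the translation to paths is immediate from the definition of $\phi_{(s+1,1)}$: the last step of $P$ is $(1,f(\text{end})-f(\text{end}-1))$, which is $D$ exactly when $f$ drops by $1$ at the last column — i.e. when the second-to-last height is $0$ — and is $F$ exactly when $f$ is already at $-1$ there. Combining with the previous paragraph: $P$ ends with $D$ $\iff$ the bead labeled $1$ is absent from $MD(\la)$ $\iff$ $\la$ has an even number of corners; and $P$ ends with $F$ $\iff$ the bead labeled $1$ is present $\iff$ $\la$ has an odd number of corners. Since $\phi_{(s+1,1)}$ is already a bijection from $\mathcal{SC}_{(s,s+1,\dots,s+p)}$ onto $F(s,p)$ by Theorem~\ref{thm:main}, restricting it gives the two claimed bijections onto $F_D(s,p)$ and $F_F(s,p)$. (Note $P$ cannot end with $U$ by the well-definedness discussion preceding the proof of Theorem~\ref{thm:main} when $d=1$ is odd, so $\{F_D,F_F\}$ exhausts $F(s,p)$.)

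\textbf{Main obstacle.} The crux — and the step I expect to take the most care — is the first one: pinning down exactly why the parity of the number of corners of a self-conjugate partition is detected by the presence or absence of the diagonal hook length $1$, and making this rigorous in a way compatible with the $(s+1,1;a)$-abacus bookkeeping (keeping track of the $\pm$ signs on labels, the convention $f(\lfloor s/2\rfloor+1)=-1$ for odd $s$, and the parity of $s$). The path-theoretic and bijectivity parts are then routine consequences of the construction of $\phi_{(s+1,1)}$ and Theorem~\ref{thm:main}.
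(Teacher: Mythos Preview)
Your approach is the paper's: reduce to the observation that a self-conjugate partition has an odd number of corners iff $1\in MD(\la)$, read this off column $r=\lfloor s/2\rfloor$ of the $(s+1,1;-2r-1)$-abacus, and translate to the last step of $P$.

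There is, however, a direction error in your middle paragraph that makes the chain of equivalences inconsistent. Your first instinct was correct: the only position in column $r$ that can carry a bead is $(0,r)$, labeled $-1$; if $1\in MD(\la)$ then $(0,r)$ carries a bead (so is not a spacer), hence by the definition of $f$ the highest negatively-labeled \emph{spacer} in that column sits at row $-1$ and $f(r)=-1$; if $1\notin MD(\la)$ then $(0,r)$ is itself that spacer and $f(r)=0$. The correct equivalence is therefore $1\in MD(\la)\iff f(r)=-1$, the opposite of what you wrote after second-guessing yourself. With this fix, your path translation (last step $D\iff f(r)=0$, last step $F\iff f(r)=-1$) does yield your stated final conclusion; as written, your intermediate claim ``$1\in MD(\la)\iff f(\lfloor s/2\rfloor)\neq -1$'' and your conclusion ``$P$ ends with $D\iff 1\notin MD(\la)$'' contradict each other.

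Your ``main obstacle'' is lighter than you expect. In a self-conjugate $\la$, reflection across the diagonal pairs up the off-diagonal corners, so the number of corners is odd iff there is a diagonal corner; a diagonal corner at $(i,i)$ forces $\la_i=\la'_i=i$ and hence hook length $1$, and conversely $1\in MD(\la)$ forces such a box to exist. The paper records this as a one-line remark and moves on.
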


\begin{proof}
Let $\la$ be a self-conjugate $(s,s+1,\dots, s+p)$-core partition and let $P=\phi_{(s+1,1)}(\la)$. We first note that $\la$ has an odd number of corners if and only if $1\in MD(\la)$.

Let $r=\lfloor s/2 \rfloor$. Note that, for the $(s+1,1;-2r-1)$-abacus diagram, position $(0,r)$ is labeled by $-1$ and it is the only position in column $r$ where beads are allowed to be placed. 
Hence, for the $(s+1,1;-2r-1)$-abacus function $f$ of $\la$, the value $f(r)$ is either $-1$ if $1\in MD(\la)$ or $0$ if $1\notin MD(\la)$. It follows from the construction of $P$, $P$ ends with either a down step if $1\notin MD(\la)$ or a flat step if $1\in MD(\la)$. This completes the proof.
\end{proof}

\begin{lem}\label{lem:flat}
Let $s$, $p$, and $D$ be positive integers with $D\geq 2$. For a self-conjugate $(s,s+1,\dots, s+p)$-core partition $\la$ with $MD(\la)=\{d_1,d_2, \dots, d_D\}$, where $d_1>d_2>\dots>d_D\geq1$, let $\tilde{\la}$ denote the self-conjugate partition with $MD(\tilde{\la})=\{d_2, \dots, d_D\}$. 
If $P=\phi_{(s+1,1)}(\la)$ and $\tilde{P}=\phi_{(s+1,1)}(\tilde{\la})$, then we have the following.

\begin{enumerate}
\item[(a)] If $d_1=d_2+2$, then $P$ and $\tilde{P}$ have the same number of flat steps.
\item[(b)] If $d_1>d_2+2$, then $P$ has two less flat steps than $\tilde{P}$.
\end{enumerate}

\end{lem}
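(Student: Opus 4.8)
The plan is to compare the $(s+1,1;-2r-1)$-abacus of $\la$ with that of $\tilde\la$, where $r=\lfloor s/2\rfloor$ and $\phi_{(s+1,1)}$ is built from the abacus function $f$ with the convention $f(r+1)=-1$ (here $d=1$ is odd). First I would note that $\tilde\la$ is again a self-conjugate $(s,s+1,\dots,s+p)$-core, so that $\tilde P$ is defined: for each $s'\in\{s,\dots,s+p\}$ the conditions (a),(b) of Proposition~\ref{prop:FMS} modulo $2s'$ pass from $MD(\la)$ to $MD(\tilde\la)=MD(\la)\setminus\{d_1\}$ because $d_1=\max MD(\la)$ (if $h\in MD(\tilde\la)$ with $h>2s'$, then $h-2s'\in MD(\la)$ and $h-2s'<h\le d_2<d_1$, so $h-2s'\in MD(\tilde\la)$). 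By Proposition~\ref{prop:injection}, which applies since $2d_1\not\equiv 0\pmod{2(s+1)}$ by Proposition~\ref{prop:FMS}(b), there is a unique position $(i_1,j_1)$ labelled $\pm d_1$; since $D\ge 2$ forces $d_1\ge 3$ and the only position of column $r$ that may carry a bead is labelled $-1$, we get $1\le j_1\le r-1$. As the abacus of $\tilde\la$ is obtained from that of $\la$ by deleting the single bead at $(i_1,j_1)$, the abacus functions $f$ of $\la$ and $\tilde f$ of $\tilde\la$ agree at every column $j\ne j_1$ and at $j=r+1$.

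The next step is to pin down the change at column $j_1$. By Lemma~\ref{lem:beads} and the maximality of $d_1$ --- which forces the bead at $(i_1,j_1)$ to be topmost in its column when its label is positive and bottommost when its label is negative --- one obtains $\tilde f(j_1)=f(j_1)+\delta$, with $\delta=-1$ in the positive-label case and $\delta=+1$ in the negative-label case. Since $f$ and $\tilde f$ differ only at $j_1$, only the $j_1$-th and $(j_1+1)$-th steps of $P$ and $\tilde P$ can differ, and a short check using the three options of Proposition~\ref{prop:f_initial}(b) shows that the number of flat steps of $P$ minus that of $\tilde P$ equals $g(u)+g(w)$, where $u=f(j_1-1)-f(j_1)$ and $w=f(j_1+1)-f(j_1)$ lie in $\{-1,0,1\}$, $g(0)=1$, $g(\delta)=-1$, and $g(-\delta)=0$. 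So the lemma reduces to computing $u$ and $w$.

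The key claim is that one of $u,w$ always equals $\delta$, while the remaining one equals $0$ if $d_1=d_2+2$ and equals $\delta$ if $d_1>d_2+2$. When $\delta=-1$ it is $w$ that always equals $\delta$: every position of column $j_1+1$ at height $\ge i_1$ is labelled by a number $\ge d_1+2>\max MD(\la)$, so that column carries no bead at height $\ge i_1$ and Proposition~\ref{prop:f_initial}(b) forces $f(j_1+1)=f(j_1)-1$. When $\delta=+1$ it is $u$ that always equals $\delta$: the relevant neighbour is column $j_1-1$, where the position $(i_1+1,j_1-1)$ is labelled $-(d_1-2s)$; if $d_1>2s$ this label is negative and $d_1-2s\in MD(\la)$ (as $\la$ is an $s$-core), so a bead sits there and Lemma~\ref{lem:beads} forces $f(j_1-1)=f(j_1)+1$, while if $d_1<2s$ the label is positive and Proposition~\ref{prop:f_initial}(b) forces the same value. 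The remaining one of $u,w$ is governed by $\pm(d_1-2)$, whose unique abacus position is $(i_1,j_1-1)$ when $\delta=-1$ and $(i_1,j_1+1)$ when $\delta=+1$: if $d_1=d_2+2$ then $d_1-2=d_2\in MD(\la)$ puts a bead exactly there, and Lemma~\ref{lem:beads} gives that this one equals $0$; if $d_1>d_2+2$ then $d_1-2\notin MD(\la)$, there is no bead there, and Proposition~\ref{prop:f_initial}(b) forces it to equal $\delta$ (in the branch $\delta=+1$, $d_1>2(s+2)$ one first invokes that $\la$ is an $(s+2)$-core to locate a forced bead one row above in column $j_1+1$). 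Granting the claim, $d_1=d_2+2$ yields $g(u)+g(w)=(-1)+1=0$, which is (a), and $d_1>d_2+2$ yields $g(u)+g(w)=(-1)+(-1)=-2$, which is (b). The main obstacle is this last case analysis: one splits on the sign of the label of $(i_1,j_1)$ and on whether $d_1$ is small or large relative to $2s$ (and $2(s+2)$), and in each branch determines the abacus function at the two neighbouring columns, either from an explicit bead forced by a core condition via Lemma~\ref{lem:beads} or from the $\{-1,0,1\}$ constraint of Proposition~\ref{prop:f_initial}(b); the boundary cases $j_1=1$ and $j_1=r-1$ are absorbed by $f(0)=0$ and the convention $f(r+1)=-1$.
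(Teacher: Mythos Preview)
Your proof is correct and follows essentially the same approach as the paper's own proof: locate the unique abacus position carrying $\pm d_1$, observe that removing that bead changes the abacus function only in column $j_1$, and then determine the two affected steps of the path by a case split on the sign of the label and on whether $d_1=d_2+2$ or $d_1>d_2+2$. Your write-up is in fact more careful than the paper's (you explicitly check that $\tilde\la$ is still an $(s,s+1,\dots,s+p)$-core, that $d_1\not\equiv s+1\pmod{2(s+1)}$, and that $1\le j_1\le r-1$), and the bookkeeping via $g$ is a clean repackaging of the same four-case table the paper writes out; no genuinely different idea is involved.
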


\begin{proof}
If $s$ is even, it can be proven in a similar way to odd, so we only prove the odd case here.
Let $s=2r+1$. For the $(s+1,1;-s-1)$-abacus diagram, let position $(i,j)$ be labeled by either $d_1$ or $-d_1$ and let $f$ and $\tilde{f}$ be the $(s+1,1;-s-1)$-abacus function  of $\la$ and $\tilde{\la}$, respectively. We note that $1\leq j \leq r-1$ and $f(x)=\tilde{f}(x)$ for all $0\leq x \leq r+1$ except $x=j$ in any case. 

We first suppose that position $(i,j)$ is labeled by $d_1$ so that position $(i,j-1)$ is labeled by $d_1-2$.
\begin{itemize}
\item When $d_1=d_2+2$, $f(j-1)=f(j)=i,~ f(j+1)=i-1$ and $\tilde{f}(j-1)=i,~ \tilde{f}(j)=\tilde{f}(j+1)=i-1$. Hence, the paths can be written as $P=P_1 \cdots P_{j-1} FD P_{j+2} \cdots P_{r+1}$ and $\tilde{P}=P_1 \cdots P_{j-1} DF P_{j+2} \cdots P_{r+1}$. 
\item When $d_1>d_2+2$, $f(j-1)=i-1,~ f(j)=i,~ f(j+1)=i-1$ and $\tilde{f}(j-1)= \tilde{f}(j)=\tilde{f}(j+1)=i-1$. In this case, the paths can be written as $P=P_1 \cdots P_{j-1} UD P_{j+2} \cdots P_{r+1}$ and $\tilde{P}=P_1 \cdots P_{j-1} FF P_{j+2} \cdots P_{r+1}$.
\end{itemize}

We now suppose that position $(i,j)$ is labeled by $-d_1$ so that position $(i,j+1)$ is labeled by $-(d_1-2)$. 
\begin{itemize}
\item When $d_1=d_2+2$, $f(j-1)=i,~ f(j)=f(j+1)=i-1$ and $\tilde{f}(j-1)=\tilde{f}(j)=i,~\tilde{f}(j+1)=i-1$. Hence, the paths can be written as $P=P_1 \cdots P_{j-1} DF P_{j+2} \cdots P_{r+1}$ and $\tilde{P}=P_1 \cdots P_{j-1} FD P_{j+2} \cdots P_{r+1}$. 
\item When $d_1>d_2+2$, $f(j-1)=i,~ f(j)=i-1,~ f(j+1)=i$ and $\tilde{f}(j-1)= \tilde{f}(j)=\tilde{f}(j+1)=i$. In this case, the paths can be written as $P=P_1 \cdots P_{j-1} DU P_{j+2} \cdots P_{r+1}$ and $\tilde{P}=P_1 \cdots P_{j-1} FF P_{j+2} \cdots P_{r+1}$.
\end{itemize}
This completes the proof.
\end{proof}

\begin{thm}\label{thm:corner} 
For positive integers $s$ and $p\geq 2$, the mapping $\phi_{(s+1,1)}$ gives a one-to-one correspondence between the set of self-conjugate $(s,s+1,\dots,s+p)$-core partitions with an even (resp. odd) number $m$ of corners and the set of paths in $F_D(s,p)$ (resp. $F_F(s,p)$) having $\lfloor s/2 \rfloor-m$ (resp. $\lfloor s/2 \rfloor -m+1$) flat steps.
\end{thm}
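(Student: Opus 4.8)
The plan is to derive Theorem~\ref{thm:corner} from Theorem~\ref{thm:main}, Lemma~\ref{lem:one}, and Lemma~\ref{lem:flat} by reducing the whole statement to counting the up steps of $\phi_{(s+1,1)}(\la)$. The starting point is a bookkeeping identity: every $P\in\mathcal{F}(\lfloor s/2\rfloor+1,-1)$ has one more down step than up step and $\lfloor s/2\rfloor+1$ steps in total, so the number of flat steps of $P$ equals $\lfloor s/2\rfloor-2u$, where $u$ is the number of up steps of $P$. Consequently, ``$\phi_{(s+1,1)}(\la)$ has $\lfloor s/2\rfloor-m$ (resp. $\lfloor s/2\rfloor-m+1$) flat steps when $m$ is even (resp. odd)'' is equivalent to ``$\phi_{(s+1,1)}(\la)$ has exactly $\lfloor m/2\rfloor$ up steps''. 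Since Lemma~\ref{lem:one} already matches the parity of $m$ with whether the path ends in $D$ or in $F$, it suffices to prove that $\phi_{(s+1,1)}(\la)$ has $\lfloor m/2\rfloor$ up steps, where $m$ denotes the number of corners of $\la$.

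I would prove this by induction on $D:=|MD(\la)|$, the Durfee square size of $\la$, peeling off the largest diagonal hook exactly as in Lemma~\ref{lem:flat}. The cases $D\in\{0,1\}$ are handled directly from the definition of $\phi_{(s+1,1)}$: for $D=0$ we have $\la=\emptyset$, $m=0$, and $\phi_{(s+1,1)}(\emptyset)=F^{\lfloor s/2\rfloor}D$, which has no up step; for $D=1$, writing $MD(\la)=\{d_1\}$, a short computation of the $(s+1,1;a)$-abacus function shows $\phi_{(s+1,1)}(\la)$ has no up step and $m=1$ when $d_1=1$, and has exactly one up step and $m=2$ when $d_1\ge 3$. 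For the inductive step, let $MD(\la)=\{d_1>d_2>\dots>d_D\}$ with $D\ge 2$ and let $\tilde\la$ be the self-conjugate partition with $MD(\tilde\la)=\{d_2,\dots,d_D\}$. First note that $\tilde\la\in\mathcal{SC}_{(s,s+1,\dots,s+p)}$: conditions (a) and (b) of Proposition~\ref{prop:FMS} pass from $\la$ to $\tilde\la$ because $MD(\tilde\la)\subseteq MD(\la)$ and because the value $h-2s'$ produced in condition (a) satisfies $h-2s'<h\le d_2<d_1$, so it is never the removed element $d_1$. By Lemma~\ref{lem:flat} together with the bookkeeping identity, the number of up steps of $\phi_{(s+1,1)}(\la)$ exceeds that of $\phi_{(s+1,1)}(\tilde\la)$ by $0$ if $d_1=d_2+2$ and by $1$ if $d_1>d_2+2$.

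It then remains to prove the purely combinatorial fact that the number of corners changes in the matching way, namely $m(\la)=m(\tilde\la)$ if $d_1=d_2+2$ and $m(\la)=m(\tilde\la)+2$ if $d_1>d_2+2$; combined with the induction hypothesis (the number of up steps of $\phi_{(s+1,1)}(\tilde\la)$ equals $\lfloor m(\tilde\la)/2\rfloor$) this yields exactly $\lfloor m(\la)/2\rfloor$ up steps for $\phi_{(s+1,1)}(\la)$ and closes the induction. To establish the combinatorial fact I would use the classical description of a self-conjugate partition by its diagonal hook lengths: $\la_i=i+(d_i-1)/2$ for $1\le i\le D$, and $\la_i=|\{k\le D:\la_k\ge i\}|$ for $i>D$; the same formulas describe $\tilde\la$ in terms of $d_2,\dots,d_D$. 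Reading these off, passing from $\tilde\la$ to $\la$ prepends a new largest part $\la_1=(d_1+1)/2$, replaces each positive part $\tilde\la_j$ by $\tilde\la_j+1$, and appends a tail of $(d_1-d_2-2)/2$ parts equal to $1$. When $d_1=d_2+2$ the tail is empty and $\la_1=\tilde\la_1+1=\la_2$, so the distinct parts of $\la$ are exactly those of $\tilde\la$ shifted by $1$ and $m(\la)=m(\tilde\la)$; when $d_1>d_2+2$ the part $\la_1$ is strictly larger than $\tilde\la_1+1$, hence a new distinct value, the tail contributes the new distinct value $1$, and $m(\la)=m(\tilde\la)+2$.

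I expect the last step --- tracking how the set of distinct parts evolves under prepending a diagonal hook --- to be the main point requiring care: one must handle correctly the length $(d_1-d_2-2)/2$ of the tail of $1$'s, the possible coincidence $\la_1=\la_2$, and the fact that the $D=1$ boundary case (formally ``$d_2=-1$'') is consistent with Lemma~\ref{lem:flat}, which is stated only for $D\ge 2$. Everything else is routine once these cases are pinned down.
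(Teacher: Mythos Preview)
Your proposal is correct and follows essentially the same route as the paper: both argue by induction on $|MD(\la)|$, settle the base cases $|MD(\la)|\in\{0,1\}$ directly from the abacus function, and in the inductive step invoke Lemma~\ref{lem:flat} together with the combinatorial fact that removing the largest diagonal hook leaves the number of corners unchanged when $d_1=d_2+2$ and decreases it by $2$ when $d_1>d_2+2$. Your reformulation in terms of up steps via the identity $\#F=\lfloor s/2\rfloor-2u$ is equivalent to the paper's flat-step count, and you are somewhat more explicit than the paper in checking that $\tilde\la$ remains an $(s,s+1,\dots,s+p)$-core and in verifying the corner-change fact via $\la_i=i+(d_i-1)/2$.
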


\begin{proof}
Let $r=\lfloor s/2 \rfloor$. For a self-conjugate partition $\la$ with $MD(\la)=\{d_1,\dots,d_D\}$, where $d_1> \cdots >d_D$, we denote the self-conjugate partition with $MD(\tilde{\la})=\{d_2,\dots,d_D\}$ by $\tilde{\la}$.  

From Lemma~\ref{lem:one}, we have learned that there is a one-to-one correspondence between the set of self-conjugate $(s,s+1,\dots,s+p)$-core partitions with an even (resp. odd) number of corners and the set $F_D(s,p)$ (resp. $F_F(s,p)$).
To prove the remainder of the theorem, we claim that if $\la$ is a self-conjugate $(s,s+1,\dots,s+p)$-core partition with an even (resp. odd) number of corners, say $m$, then its corresponding path $P=\phi_{(s+1,1)}(\la)$ has $r-m$ (resp. $r-m+1$) flat steps. 
We prove the claim by using induction on $|MD(\la)|$. Let $f$ be the $(s+1,1;-2r-1)$-abacus function of $\la$.

If $|MD(\la)|=0$, then $\la$ is an empty partition so that it has $0$ corner. Since there is no bead on the $(s+1,1;-2r-1)$-abacus of $\la$, $f(j)=0$ for all $j$ except $f(r+1)=-1$. Hence, the path $P=F\cdots FD$ has $r-0$ flat steps as we claimed. 
We now consider the case with $|MD(\la)|=1$. Let $\la$ be a self-conjugate $(s,s+1,\dots,s+p)$-core partition with $MD(\la)=\{d\}$. We note that $d$ is less than $2s$, because that $d-2s\in MD(\la)$ otherwise. Hence, if position $(i,j)$ is labeled by either $d$ or $-d$, then $i$ must be either $0$ or $1$. First, we suppose that $d=1$ so that $j=r$ and $f(0)=\cdots=f(r-1)=0$ and $f(r)=f(r+1)=-1$. In this case, $\la=(1)$ has one corner and the corresponding path $P=F\cdots FDF$ has $r-1+1$ flat steps as we desired. If $d\neq 1$, then $\la$ has two corners and its corresponding path can be written as either $P=F\cdots F UD F\cdots F D$ or $P=F\cdots F DU F\cdots FD$ so that it has $r-2$ flat steps as desired. 

Now, we assume that the claim holds for all $\la$ with $|MD(\la)|=D-1$ for $D\geq2$. Let $\la$ be a self-conjugate $(s,s+1,\dots, s+p)$-core partition with $m$ corners and $MD(\la)=\{d_1,\dots,d_D\}$, where $d_1> \cdots >d_D$. When $d_1=d_2+2$, $\tilde{\la}$ has $m$ corners and its corresponding path $\tilde{P}$ has $r-m$ (resp. $r-m+1$) flat steps if $m$ is even (resp. odd) by the induction hypothesis. It follows from Lemma~\ref{lem:flat}~(a) that $P$ also has $r-m$ (resp. $r-m+1$) flat steps if $m$ is even (resp. odd). When $d_1>d_2+2$, $\tilde{\la}$ has $m-2$ corners and its corresponding path $\tilde{P}$ has $r-m+2$ (resp. $r-m+3$) flat steps if $m$ is even (resp. odd) by the induction hypothesis. It follows from Lemma~\ref{lem:flat}~(b) that $P$ has $r-m$ (resp. $r-m+1$) flat steps if $m$ is even (resp. odd). This completes the proof of the claim.

\end{proof}

In particular, we obtain formulae for the number of self-conjugate $(s,s+1,\dots,s+p)$-core partitions with $m$ corners for $p=2$ and $3$.

\begin{prop}\label{prop:corner2}
The number of self-conjugate $(s,s+1,s+2)$-core partitions with $m$ corners is given by 
\[
\binom{\lfloor \frac{s}{2} \rfloor}{\lfloor \frac{m}{2} \rfloor , \lfloor \frac{m+1}{2} \rfloor , \lfloor \frac{s}{2} \rfloor-m}\,.
\] 
\end{prop}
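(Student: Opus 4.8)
The plan is to apply Theorem~\ref{thm:corner} in the case $p=2$ and then perform a direct multinomial count of free rational Motzkin paths.

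First I would identify the set $F(s,2)$ explicitly. Specializing the description of $F(s,p)$ (equivalently Theorem~\ref{thm:main}) to $p=2$, every structural restriction becomes vacuous: the forbidden consecutive subpaths $UF^iU$ range over $i=0,\dots,p-3=-1$, the forbidden prefixes $F^jU$ range over $j=0,\dots,\lfloor(p-3)/2\rfloor$ or $\lfloor(p-4)/2\rfloor$, both of which are $-1$, and the only surviving condition is that a path not end with $UF^k$ for $k=0,\dots,p-2=0$, i.e.\ not end with $U$. Hence $F(s,2)$ is exactly the set of $P\in\mathcal F(\lfloor s/2\rfloor+1,-1)$ whose last step is $D$ or $F$, so $F_D(s,2)$ and $F_F(s,2)$ are the sub-collections of such paths ending with $D$ and with $F$, respectively.

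Next I would count these paths. Write $r=\lfloor s/2\rfloor$. A path of type $(r+1,-1)$ with $u$ up steps necessarily has $u+1$ down steps and $r-2u$ flat steps, since the horizontal and vertical displacements are $r+1$ and $-1$. By Theorem~\ref{thm:corner}, if $m$ is even the self-conjugate $(s,s+1,s+2)$-cores with $m$ corners correspond bijectively to the paths in $F_D(s,2)$ with $r-m$ flat steps; solving $r-2u=r-m$ forces $u=m/2$, so deleting the terminal $D$ from such a path yields an arbitrary word in $m/2$ steps $U$, $m/2$ steps $D$, and $r-m$ steps $F$, of which there are $\binom{r}{m/2,\,m/2,\,r-m}$. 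If $m$ is odd one uses $F_F(s,2)$ with $r-m+1$ flat steps instead, which forces $u=(m-1)/2$ up steps and $(m+1)/2$ down steps; deleting the terminal $F$ yields an arbitrary word in $(m-1)/2$ steps $U$, $(m+1)/2$ steps $D$, and $r-m$ steps $F$, giving $\binom{r}{(m-1)/2,\,(m+1)/2,\,r-m}$. In both cases this equals $\binom{\lfloor s/2\rfloor}{\lfloor m/2\rfloor,\,\lfloor(m+1)/2\rfloor,\,\lfloor s/2\rfloor-m}$, which is the claimed value. (When $m>\lfloor s/2\rfloor$ there are no such paths and the multinomial coefficient is zero, so the identity still holds.)

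The main obstacle is purely bookkeeping rather than conceptual. The two points that require care are: (i) checking that for $p=2$ all the restrictions in Theorem~\ref{thm:main} collapse, so that $F(s,2)$ really is the essentially unrestricted set of free rational Motzkin paths of type $(\lfloor s/2\rfloor+1,-1)$ not ending in $U$; and (ii) matching the parity of $m$ simultaneously with the last-step condition handed down by Theorem~\ref{thm:corner} (even $m$ gives a path ending in $D$, odd $m$ a path ending in $F$) and with the floor functions in the multinomial coefficient. Once these are aligned, the ``delete the last step'' argument makes the count immediate.
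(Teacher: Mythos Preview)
Your proof is correct and follows essentially the same approach as the paper's: both invoke Theorem~\ref{thm:corner} to reduce to counting free rational Motzkin paths of type $(\lfloor s/2\rfloor+1,-1)$ with a prescribed last step and number of flat steps, split according to the parity of $m$, and then evaluate the multinomial. Your explicit verification that all restrictions in $F(s,2)$ collapse except ``not ending in $U$'' is an extra sanity check the paper omits, but otherwise the arguments are the same.
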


\begin{proof}
Let $r=\lfloor s/2 \rfloor$.
By Theorem~\ref{thm:corner}, if $m$ is even (resp. odd), then the number of self-conjugate $(s,s+1,s+2)$-core partitions with $m$ corners is equal to the number of free rational Motzkin paths of type $(r+1,-1)$ with $r-m$ (resp. $r-m+1$) flat steps for which ends with a down (resp. flat) step. Now, we enumerate these Motzkin paths. 

When $m=2k$, each of the paths consists of $k$ up steps, $k+1$ down step, and $r-2k$ flat steps and ends with a down step. Therefore, the number of such paths is $\binom{r}{k,k,r-2k}$. 

When $m=2k+1$, each of the paths consists of $k$ up steps, $k+1$ down step, and $r-2k$ flat steps and ends with a flat step. Therefore, the number of such paths is $\binom{r}{k,k+1,r-2k-1}$. 

Thus, the number of self-conjugate $(s,s+1,s+2)$-core partitions with $m$ corners is given by 
\[
\binom{r}{\lfloor \frac{m}{2} \rfloor , \lfloor \frac{m+1}{2} \rfloor , r-m}
\] 
and this completes the proof.
\end{proof}

\begin{prop}
The number of self-conjugate $(s,s+1,s+2,s+3)$-core partitions with $m$ corners is given by
\[
\binom{\lfloor \frac{s}{2}\rfloor-\lfloor \frac{m}{2} \rfloor}{\lfloor \frac{s}{2} \rfloor-m}\binom{\lfloor \frac{s+1}{2} \rfloor-\lfloor \frac{m+1}{2} \rfloor}{\lfloor \frac{m}{2} \rfloor}\,.
\]
\end{prop}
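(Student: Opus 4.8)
The plan is to specialize the correspondence of Theorem~\ref{thm:corner} to $p=3$ and then enumerate the resulting lattice paths directly, using the same ``delete the up steps'' insertion scheme that appears in the proof of Theorem~\ref{thm:count3}. Write $r=\lfloor s/2\rfloor$. By Theorem~\ref{thm:corner}, a self-conjugate $(s,s+1,s+2,s+3)$-core partition with an even (resp.\ odd) number $m$ of corners corresponds under $\phi_{(s+1,1)}$ to a path in $F_D(s,3)$ with exactly $r-m$ flat steps (resp.\ a path in $F_F(s,3)$ with exactly $r-m+1$ flat steps). Recall that $F(s,3)$ consists of the paths $P\in\mathcal F(r+1,-1)$ that contain no $UU$, end with neither $U$ nor $UF$, and --- only when $s$ is even --- do not start with $U$. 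Since a path in $\mathcal F(r+1,-1)$ with $u$ up steps has $u+1$ down steps and $r-2u$ flat steps, prescribing the number of flat steps forces $u=\lfloor m/2\rfloor=:k$ in both parities of $m$; so in the even case we count $P\in F_D(s,3)$ with $k$ up steps, and in the odd case $P\in F_F(s,3)$ with $k$ up steps.

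To do the counting, given such a $P$ let $Q=Q_1\cdots Q_{r+1-k}$ be the subword of non-up steps (so $Q$ has $k+1$ down steps and $r-2k$ flat steps), and recover $P$ by inserting $k$ up steps into the $r+2-k$ gaps of $Q$ with at most one per gap --- this last restriction is exactly ``$P$ has no $UU$''. The remaining conditions translate as: ``$P$ does not end with $U$'' forbids the final gap; in the even-$m$ case ``$P$ ends with $D$'' means $Q_{r+1-k}=D$ and then ``$P$ does not end with $UF$'' is automatic; in the odd-$m$ case ``$P$ ends with $F$'' means $Q_{r+1-k}=F$ and ``$P$ does not end with $UF$'' additionally forbids the penultimate gap; and when $s$ is even ``$P$ does not start with $U$'' forbids the first gap. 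Counting the admissible $Q$ by placing the non-terminal entries and the insertions by choosing a subset of the allowed gaps gives a product of two binomial coefficients in each of the four cases: for $m=2k$ one gets $\binom{r-k}{r-2k}$ choices of $Q$ times $\binom{r+1-k}{k}$ (if $s$ odd) or $\binom{r-k}{k}$ (if $s$ even) insertions; for $m=2k+1$ one gets $\binom{r-k}{r-2k-1}$ choices of $Q$ times $\binom{r-k}{k}$ (if $s$ odd) or $\binom{r-1-k}{k}$ (if $s$ even) insertions. A short check of floors --- using $\lfloor m/2\rfloor=\lfloor(m+1)/2\rfloor=k$ when $m=2k$, $\lfloor m/2\rfloor=k$ and $\lfloor(m+1)/2\rfloor=k+1$ when $m=2k+1$, together with $\lfloor(s+1)/2\rfloor=r+1$ for odd $s$ and $=r$ for even $s$ --- shows all four expressions equal $\binom{\lfloor s/2\rfloor-\lfloor m/2\rfloor}{\lfloor s/2\rfloor-m}\binom{\lfloor(s+1)/2\rfloor-\lfloor(m+1)/2\rfloor}{\lfloor m/2\rfloor}$.

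The main obstacle is not conceptual but a matter of careful bookkeeping: the end constraints ``no $U$'' and ``no $UF$'' interact differently according to whether the core word $Q$ terminates in $D$ or in $F$, the extra start constraint appears precisely when $s$ is even, and the off-by-one in the binomial arguments must be tracked correctly through all four $(s,m)$-parity cases before collapsing them into the single floor formula. Since every other step runs in parallel with the argument already carried out for Theorem~\ref{thm:count3} with $d=1$, I would present the proof by stating the $Q$-plus-insertions count in each of the four cases and leaving the elementary floor verification to the reader.
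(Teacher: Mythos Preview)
Your proposal is correct and follows essentially the same route as the paper's proof: both invoke Theorem~\ref{thm:corner} to reduce to paths in $F_D(s,3)$ or $F_F(s,3)$ with a prescribed number of up steps, and both enumerate these via the ``delete the $U$'s and reinsert'' scheme from the proof of Theorem~\ref{thm:count3}, splitting according to the parities of $s$ and $m$. Your write-up is in fact more explicit than the paper's, which handles $s=2r+1$ in detail, refers back to Theorem~\ref{thm:count3} for the actual counts, and then dispatches $s=2r$ with ``it is similar''.
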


\begin{proof}
First, we consider the case where $s=2r+1$ and $m=2k$. It follows from Theorems \ref{thm:main} and \ref{thm:corner} that the number of self-conjugate $(s,s+1,s+2,s+3)$-core partitions with $m$ corners is equal to the number of free rational Motzkin paths of type $(r+1,-1)$ with $r-2k$ flat steps for which ends with a down step and has no $UU$ as a consecutive subpath. It can be found in the proof of Theorem \ref{thm:count3} that the number of such paths is $\binom{r-k}{r-2k}\binom{r+1-k}{k}$.

We now consider the case where $s=2r+1$ and $m=2k+1$. By Theorems \ref{thm:main} and \ref{thm:corner}, the number of self-conjugate $(s,s+1,s+2,s+3)$-core partitions with $m$ corners is equal to the number of free rational Motzkin paths of type $(r+1,-1)$ with $r-2k$ flat steps for which ends with a flat step, does not end with $UF$, and has no $UU$ as a consecutive subpath. Similarly, the number of such paths is $\binom{r-k}{r-(2k+1)}\binom{r-k}{k}$ by the proof of Theorem \ref{thm:count3}.

Therefore, the number of self-conjugate $(s,s+1,s+2,s+3)$-core partitions with $m$ corners is given by
\[
\binom{r-\lfloor \frac{m}{2} \rfloor}{r-m}\binom{r+1-\lfloor \frac{m+1}{2} \rfloor}{\lfloor \frac{m}{2} \rfloor}\,,
\]
where $s=2r+1$.

When $s=2r$, it is similar to the case where $s=2r+1$ that the number of self-conjugate $(s,s+1,s+2,s+3)$-core partitions is given by
\[
\binom{r-\lfloor \frac{m}{2} \rfloor}{r-m}\binom{r-\lfloor \frac{m+1}{2} \rfloor}{\lfloor \frac{m}{2} \rfloor}\,.
\]

Thus, the number of self-conjugate $(s,s+1,s+2,s+3)$-core partitions can be represented as
\[
\binom{\lfloor \frac{s}{2}\rfloor-\lfloor \frac{m}{2} \rfloor}{\lfloor \frac{s}{2} \rfloor-m}\binom{\lfloor \frac{s+1}{2} \rfloor-\lfloor \frac{m+1}{2} \rfloor}{\lfloor \frac{m}{2} \rfloor}\,.
\]

\end{proof}


\section*{Acknowledgments}
Hyunsoo Cho was supported by Basic Science Research Program through the National Research Foundation of Korea(NRF) funded by the Ministry of Education (Grant No. 2019R1A6A1A11051177). JiSun Huh was supported by the National Research Foundation of Korea(NRF) grant funded by the Korea government(MSIT) (No. 2020R1C1C1A01008524).

\bibliographystyle{plain}  
\bibliography{mybib} 

\begin{thebibliography}{10}

\bibitem{AL}
Tewodros Amdeberhan and Emily~Sergel Leven.
\newblock Multi-cores, posets, and lattice paths.
\newblock {\em Adv. in Appl. Math.}, 71:1--13, 2015.

\bibitem{Anderson}
Jaclyn Anderson.
\newblock Partitions which are simultaneously {$t_1$}- and {$t_2$}-core.
\newblock {\em Discrete Math.}, 248(1-3):237--243, 2002.

\bibitem{AHJ}
Drew Armstrong, Christopher R.~H. Hanusa, and Brant~C. Jones.
\newblock Results and conjectures on simultaneous core partitions.
\newblock {\em European J. Combin.}, 41:205--220, 2014.

\bibitem{BNY}
Jineon Baek, Hayan Nam, and Myungjun Yu.
\newblock Johnson's bijections and their application to counting simultaneous
  core partitions.
\newblock {\em European J. Combin.}, 75:43--54, 2019.

\bibitem{CHW}
William Y.~C. Chen, Harry H.~Y. Huang, and Larry X.~W. Wang.
\newblock Average size of a self-conjugate {$(s,t)$}-core partition.
\newblock {\em Proc. Amer. Math. Soc.}, 144(4):1391--1399, 2016.

\bibitem{CHS}
Hyunsoo Cho, JiSun Huh, and Jaebum Sohn.
\newblock Counting self-conjugate $(s, s+ 1, s+ 2)$-core partitions.
\newblock {\em arXiv preprint arXiv:1904.02313}, 2019.

\bibitem{CHS2}
Hyunsoo Cho, JiSun Huh, and Jaebum Sohn.
\newblock The $(s, s+ d, s+ 2d, \dots, s+pd)$-core partitions and rational
  motzkin paths.
\newblock {\em arXiv preprint arXiv:2001.06651}, 2020.

\bibitem{Fayers}
Matthew Fayers.
\newblock The {$t$}-core of an {$s$}-core.
\newblock {\em J. Combin. Theory Ser. A}, 118(5):1525--1539, 2011.

\bibitem{Fayers2}
Matthew Fayers.
\newblock A generalisation of core partitions.
\newblock {\em J. Combin. Theory Ser. A}, 127:58--84, 2014.

\bibitem{FV}
Susanna Fishel and Monica Vazirani.
\newblock A bijection between dominant {S}hi regions and core partitions.
\newblock {\em European J. Combin.}, 31(8):2087--2101, 2010.

\bibitem{FMS}
Ben Ford, Ho\`{a}ng Mai, and Lawrence Sze.
\newblock Self-conjugate simultaneous {$p$}- and {$q$}-core partitions and
  blocks of {$A_n$}.
\newblock {\em J. Number Theory}, 129(4):858--865, 2009.

\bibitem{HW}
Harry H.~Y. Huang and Larry X.~W. Wang.
\newblock The corners of core partitions.
\newblock {\em SIAM J. Discrete Math.}, 32(3):1887--1902, 2018.

\bibitem{JK}
Gordon James and Adalbert Kerber.
\newblock {\em The representation theory of the symmetric group}, volume~16 of
  {\em Encyclopedia of Mathematics and its Applications}.
\newblock Addison-Wesley Publishing Co., Reading, Mass., 1981.

\bibitem{Johnson2}
Paul Johnson.
\newblock Simultaneous cores with restrictions and a question of zaleski and
  zeilberger.
\newblock {\em arXiv preprint arXiv:1802.09621}, 2018.

\bibitem{NS}
Rishi Nath and James~A. Sellers.
\newblock A combinatorial proof of a relationship between maximal {$(2k-1,
  2k+1)$}-cores and {$(2k-1, 2k,2k+1)$}-cores.
\newblock {\em Electron. J. Combin.}, 23(1):Paper 1.13, 11, 2016.

\bibitem{Wang}
Victor~Y. Wang.
\newblock Simultaneous core partitions: parameterizations and sums.
\newblock {\em Electron. J. Combin.}, 23(1):Paper 1.4, 34, 2016.

\bibitem{YYZ}
Sherry H.~F. Yan, Yao Yu, and Hao Zhou.
\newblock On self-conjugate {$(s, s+1,\dots, s+k)$}-core partitions.
\newblock {\em Adv. in Appl. Math.}, 113:101975, 20, 2020.

\bibitem{YZZ}
Jane Y.~X. Yang, Michael X.~X. Zhong, and Robin D.~P. Zhou.
\newblock On the enumeration of {$(s,s+1,s+2)$}-core partitions.
\newblock {\em European J. Combin.}, 49:203--217, 2015.

\end{thebibliography}

\end{document}